\renewcommand{\phi}{\varphi}
\newcommand{\img}{\operatorname{img}}
\newcommand{\mc}[1]{\mathcal{#1}}
\newcommand{\bb}[1]{\mathbb{#1}}
\newcommand{\frk}[1]{\mathfrak{#1}}
\newcommand{\Z}{\mathbb{Z}}
\newcommand{\N}{\mathbb{N}}
\newcommand{\C}{\mathbb{C}}
\newcommand{\R}{\mathbb{R}}
\newcommand{\T}{\mathbb{T}}
\newcommand{\Ho}{\mathcal{O}}
\newcommand{\Aut}{\operatorname {Aut} }
\newcommand{\bs}{\backslash}
\newcommand{\ip}[1]{\langle #1 \rangle}
     \trivlist \item[]\leavevmode
\linewidth\hbox{\@eqnnum}%
\newcommand{\dint}{\ \!\mathrm{d}}
\newcommand{\dext}{\mathrm{d}}
\newcommand{\del}{\partial}
\newcommand{\delbar}{\overline{\partial}}
\newcounter{dummy} \numberwithin{dummy}{section}
\newtheorem{dfn}[dummy]{Definition}
\newtheorem{exa}[dummy]{Example}
\newtheorem{rmk}[dummy]{Remark}
\newtheorem{lem}[dummy]{Lemma}
\newtheorem{prp}[dummy]{Proposition}
\newtheorem{thm}[dummy]{Theorem}
\newtheorem{cor}[dummy]{Corollary}
\newcommand{\SF}[3]{ \mathcal C^\infty_{#3}(#1; \Lambda^{#2}) }
\definecolor{darkred}{RGB}{99,00,00}
\newtheoremstyle{question}{\topsep}{\topsep}{\itshape}{}{\color{darkred}\bfseries}{.}{ }{}
\theoremstyle{question}
\newcommandx{\unsure}[2][1=]{\todo[linecolor=red,backgroundcolor=red!25,bordercolor=red,#1]{#2}}
\newcommandx{\change}[2][1=]{\todo[linecolor=blue,backgroundcolor=blue!25,bordercolor=blue,#1]{#2}}
\newcommandx{\info}[2][1=]{\todo[linecolor=OliveGreen,backgroundcolor=OliveGreen!25,bordercolor=OliveGreen,#1]{#2}}
\newcommandx{\improvement}[2][1=]{\todo[linecolor=Plum,backgroundcolor=Plum!25,bordercolor=Plum,#1]{#2}}
\newcommandx{\thiswillnotshow}[2][1=]{\todo[disable,#1]{#2}}
\newcommand{\BB}{\mathcal{B}}
\newcommand{\GG}{{\mathrm{G}}}
\newcommand{\pp}{\mathfrak{p}}
\newcommand{\CO}{{\mathcal{O}}}
\newcommand{\VV}{{\mathcal{V}}}
\title{The cohomology of left-invariant elliptic involutive structures on compact Lie groups}
\author{Max Reinhold Jahnke\footnote{During the development of this work, the author received funding from CNPq (process 140199/2014-4) and CAPES (PDSE 88881.131905/2016-01).}\\ \small{\href{mailto:jahnke@dm.ufscar.br}{jahnke@dm.ufscar.br}} \\ \small{Department of Mathematics} \\\small{Federal University of São Carlos, SP, Brazil.}}
\date{\today}
\begin{document}

\maketitle

\begin{abstract}
    Inspired by the work of Chevalley and Eilenberg on the de Rham cohomology on compact Lie groups, we prove that, under certain algebraic and topological conditions, the cohomology associated to left-invariant elliptic, and even hypocomplex, involutive structures on compact Lie groups can be computed by using only Lie algebras, thus reducing the analytical problem to a purely algebraic one. The main tool is the Leray spectral sequence that connects the result obtained Chevalley and Eilenberg to a result by Bott on the Dolbeault cohomology of a homogeneous manifold.
\end{abstract}

% \tableofcontents

\section{Introduction}

% Motivação inicial

The initial motivation for this work was an article by Chevalley and Eilenberg \cite{chevalley1948cohomology}, where it is proved that the de Rham cohomology of a compact Lie group can be studied by using only left-invariant forms, thus reducing the study of the cohomology to a purely algebraic problem. This led us to question whether something similar could be done to study the cohomology associated with general involutive structures defined on compact Lie groups. It is clear that the involutive structures must have algebraic properties from the Lie group. Therefore it is natural to restrict our attention to involutive structures that are invariant by the action of the group, these are called left-invariant involutive structures.

Some particular cases of left-invariant involutive structures, as well as involutive structures that are invariant by the group actions in homogeneous spaces, have been studied previously. For example, Bott proved \cite{bott1957homogeneous} that under certain topological conditions, the Doulbeault cohomology of compact homogeneous manifolds can be computed by a purely algebraic method. Pittie \cite{pittie1988dolbeault} not only has shown that Bott's results can be applied to study the complex associated with left-invariant complex involutive structures on compact semisimple Lie groups, but has also classified all such structures. We would also like to point out that interest in left-invariant involutive structures continues to grow, as evidenced by the recent works by \cite{korman2014elliptic}, Araújo \cite{araujo@2019global} and Bor and Jacobowitz \cite{bor2019left}.

% Tem como citar o Giuliano aqui e usar isso de desculpa para submeter o trabalho para o Transactions?

As the case of de Rham's cohomology and Doulbeault's cohomology can be considered extreme cases of elliptic structures, we decided to find conditions to guarantee that it is possible to study the cohomology of left-invariant elliptic structures on compact Lie groups using only left-invariant forms, or, in general, if it is possible to reduce the computation of the cohomology to a purely algebraic one.

% Descrição superficial dos resultados obtidos e técnicas usadas

The following is a summary of the results we obtained. Let $\Omega$ be an orientable manifold and let $\mathcal V \subset \C T \Omega$ be a involutive vector bundle of rank $n$ and let $m=N-n$. For the convenience of the reader, we reviewed the basic definitions of the theory of involutive structures in Section \ref{sec:involutive_structures}. There exists a differential complex associated to the involutive structure $\mathcal V$ denoted by $(\mathcal C^\infty(\Omega; \Lambda^{p,q}), \dext')$ with cohomology spaces denoted by $H^{p,q}_{\mathcal C^\infty}(\Omega; \mathcal V)$. This differential operator can be extended to currents and compactly supported currents. In these cases, the respective cohomologies are denoted by $H^{p,q}_{\mathcal D'}(\Omega; \mathcal V)$ and $H^{p,q}_{\mathcal E'}(\Omega; \mathcal V)$.

% isomorfismo de CE

Let $G$ be a compact Lie group and let $\frk g$ be the complexification of its Lie algebra. There is an one-to-one correspondence between complex Lie subalgebras $\frk h \subset \frk g$ and left-invariant involutive structures. We always denote the left-invariant involutive structures by corresponding Lie algebra $\frk h$. In Section \ref{sec:basicdef}, we showed how to construct many examples of Lie subalgebras having properties that are interesting from the point of view of the theory of involutive structures.

Since we are considering involutive structures defined by an algebra $\frk h$, the cohomology of the associated differential operator is denoted by $H^{p,q}(G; \frk h)$. The result by Chevalley and Eilenberg that inspired our work states that in the de Rham case (when $\frk h = \frk g$), each cohomology class in $H^{p,q}(G; \frk h)$ always has a left-invariant representative. % In this case, we say that $\frk h$ has left-invariance in degree $(p,q)$.

Chevalley and Eilenberg also introduced what is now called the Chevalley-Eilenberg complex. It is a complex defined using only Lie algebras and it is similar to the usual differential complex of the exterior derivative, but restricted to left-invariant forms. It is straightforward to extend the definition of the Chevalley-Eilenberg complex to consider the complex induced by a given subalgebra. The complex induced by the Lie algebra $\frk h$ is denoted by $C^{p,q}(\frk g; \frk h)$ with cohomology spaces denoted by $H^{p,q}(\frk g; \frk h)$. We have two ways to look into these complexes. In Section \ref{sec:involutive_structures_on_compact_lie_groups} we construct a restriction of the usual analytical complexes and their cohomologies, called left-invariant complex and cohomologies, and in Section \ref{sec:cohomology_of_lie_algebras} we define a purely algebraic version of them.

In Section \ref{sec:ce_tech}, we adapted ideas from Chevalley and Eilenberg's paper and constructed a homomorphism
\begin{equation}
\label{chp:intro:the_master_isomorphism}
\phi: H^{p,q}(\frk g; \frk h) \to H^{p,q}(G; \frk h)
\end{equation}
with $\frk h \subset \frk g$ being any Lie algebra. Chevalley and Eilenberg proved is that, for $\frk h = \frk g$, this homomorphism is actually an isomorphism. In Lemma \ref{inclusion_left_invatiant_cohomology_injective}, we proved this map is always injective. Since the homomorphism is injective, it is clear that the algebraic properties of $\frk h$ and $\frk g$ play a role in the dimension of $H^{p,q}(G; \frk h)$. In other words: there are algebraic obstruction to global solvability. However, we notice that there are known examples of involutive structures, such as the Example \ref{exa:torus_liouvile}, in which it is impossible for the homomorphism \eqref{chp:intro:the_master_isomorphism} to be surjective. We explain this in detail in Example \ref{exa:torus_liouvile}.

Still adapting ideas from Chevalley and Eilenberg, and by using that $H^{m-p,n-q}_{\mathcal D'}(G; \frk h)$ is the topological dual of $H^{p,q}_{\mathcal C^\infty}(G; \frk h)$, we proved the following: 

\begin{thm}
\label{int:li:thm:duality_of_left}
    Let $\frk h$ be any left-invariant involutive structure defined over a compact Lie group $G$. Suppose that the operators
    \begin{equation}
        \label{eq:two_operators_closed_range}
        \mathcal C^{\infty}(G; \Lambda^{p,q-1}) \xrightarrow{\dext'} \mathcal C^{\infty}(G; \Lambda^{p,q}) \xrightarrow{\dext'} \mathcal C^{\infty}(G; \Lambda^{p,q+1})
    \end{equation}
    have closed range. Then, it holds that every cohomology class in degree $(p,q)$ has a left-invariant representative if, and only if, every cohomology class in degree $(m-p, n-q)$ also has a left-invariant representative.
\end{thm}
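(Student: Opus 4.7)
The strategy is to combine an algebraic Poincaré-type duality on the Chevalley-Eilenberg cohomology with the analytic duality between smooth forms and currents provided by the closed-range hypothesis. On the algebraic side, since compact Lie groups are unimodular, the wedge product together with a left-invariant volume form induces a non-degenerate pairing
\[
H^{p,q}(\frk g; \frk h) \otimes H^{m-p,n-q}(\frk g; \frk h) \to \C,
\]
so both spaces are finite dimensional of equal dimension. On the analytic side, the integration pairing $(\omega, T) \mapsto \int_G \omega \wedge T$ descends to a pairing $H^{p,q}_{\mathcal C^\infty}(G; \frk h) \otimes H^{m-p,n-q}_{\mathcal D'}(G; \frk h) \to \C$ which, under the closed-range hypothesis at bidegree $(p,q)$, is non-degenerate and identifies each factor with the topological dual of the other. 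The two pairings are compatible via the homomorphism $\phi$ of \eqref{chp:intro:the_master_isomorphism}, because wedge products of left-invariant forms are themselves left-invariant and integrate against the normalized Haar measure to the algebraic contraction.

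Suppose every smooth class in degree $(p,q)$ admits a left-invariant representative, so that by Lemma \ref{inclusion_left_invatiant_cohomology_injective} the map $\phi$ at bidegree $(p,q)$ is an isomorphism; in particular $H^{p,q}_{\mathcal C^\infty}(G; \frk h)$ is finite dimensional. Chaining the analytic duality, this isomorphism, and algebraic Poincaré duality yields
\[
\dim H^{m-p,n-q}_{\mathcal D'}(G; \frk h) = \dim H^{m-p,n-q}(\frk g; \frk h).
\]
Since the analogue of $\phi$ into current cohomology is also injective (by an argument parallel to that of Lemma \ref{inclusion_left_invatiant_cohomology_injective}), this forces it to be an isomorphism, so that every current class in bidegree $(m-p,n-q)$ already has a left-invariant representative.

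To lift this back to smooth forms at bidegree $(m-p,n-q)$, take a smooth closed form $\omega$ and use the previous step to write $\omega - \omega' = \dext' T$ for some left-invariant $\omega'$ and current $T$. Averaging, $\ol T := \int_G L_g^* T\, dg$ is a left-invariant current, hence automatically a smooth left-invariant form, and satisfies $\dext' \ol T = \ol\omega - \omega'$ smoothly. It then suffices to show that averaging preserves the smooth cohomology class, i.e.\ $[\omega] = [\ol\omega]$ in $H^{m-p,n-q}_{\mathcal C^\infty}(G; \frk h)$. This is the Chevalley-Eilenberg averaging step: $\mathcal L_X$ for $X \in \frk g$ commutes with $\dext'$ because left translations preserve the involutive structure, and the Cartan-type decomposition of $\mathcal L_X$ must be exploited to provide a \emph{smooth} primitive for $\mathcal L_X \omega$. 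The converse direction is symmetric, so this establishes the equivalence. The main obstacle is precisely this last homotopy step: transferring the classical Cartan formula of Chevalley and Eilenberg to the involutive complex $\dext'$ while guaranteeing smoothness of the primitive, since a priori the natural Cartan-type identity $\mathcal L_X = \dext' i_X + i_X \dext'$ only makes sense for vector fields tangent to the structure.
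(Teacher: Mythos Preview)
Your approach has a genuine gap, and it is not merely the homotopy step you flag at the end. The final reduction ``it then suffices to show that averaging preserves the smooth cohomology class, i.e.\ $[\omega] = [\overline\omega]$ in $H^{m-p,n-q}_{\mathcal C^\infty}(G;\frk h)$'' is circular: the statement $[\omega]=[A\omega]$ for every closed $\omega$ is exactly the assertion that every class in that bidegree has a left-invariant representative, which is what you are trying to prove. The Cartan formula $\mathscr L_X = \dext' \imath_X + \imath_X \dext'$ is only available for $X\in\frk h$, so it cannot produce a chain homotopy between $\mathrm{id}$ and $A$ on the $\dext'$-complex in general; if it could, the theorem would be trivial and no hypothesis at $(p,q)$ would be needed. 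In addition, the algebraic Poincar\'e duality $H^{p,q}(\frk g;\frk h)\cong H^{m-p,n-q}(\frk g;\frk h)^*$ is asserted without proof; unimodularity of $G$ gives a perfect pairing on the cochain spaces, but non-degeneracy at the level of cohomology of the subcomplex attached to $\frk h$ is not automatic and would itself require an argument.

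The paper avoids all of this by a single observation you did not use: the averaging operator $A$ is formally self-adjoint with respect to the integration pairing, so its extension to currents satisfies $(Au)(v)=u(Av)$. Given the closed-range hypothesis, Serre duality makes the pairing $H^{p,q}_{\mathcal C^\infty}\times H^{m-p,n-q}_{\mathcal D'}\to\C$ non-degenerate. If every class $[v]\in H^{p,q}_{\mathcal C^\infty}$ has a left-invariant representative, choose such a $v$ (so $Av=v$); then for any $[u]\in H^{m-p,n-q}_{\mathcal D'}$ one gets $(Au-u)(v)=u(Av-v)=0$, and non-degeneracy forces $[Au-u]=0$. Thus $[u]=[Au]$ with $Au$ left-invariant. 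No dimension count, no algebraic Poincar\'e duality, and no Cartan homotopy are needed; the self-adjointness of $A$ transports the hypothesis across the Serre pairing directly.
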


% every cohomology class in $H^{p,q}_{C^\infty}(G; \frk h)$ has a left-invariant representative if, and only if, the same true for $H^{m-p,n-q}_{\mathcal D'}(G; \frk h)$. % In other words, left-invariance in degree $(p,q)$ is equivalent to left-invariance $(m-p,n-q)$.
% This is Theorem \ref{thm:li:thm:duality_of_left}.

Helgason \cite{helgasondifferential} have a different proof that the homomorphism \eqref{chp:intro:the_master_isomorphism} is a isomorphism when $\frk h = \frk g$. By adapting his proof, we proved the following result.
\begin{thm}
\label{int:comp_lie:hypo_subelliptic}
 Let $\frk h$ be an left-invariant hypocomplex structure on a compact Lie group $G$. Then every cohomology class $u \in H^{p,0}_{\mathcal C^{\infty}}(G; \frk h)$ has a left-invariant representative. That is, $H^{p,0}_{\mathcal C^{\infty}}(G; \frk h) = H^{p,0}_{L}(G; \frk h).$
\end{thm}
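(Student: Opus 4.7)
The plan is to adapt Helgason's proof of the classical Chevalley--Eilenberg theorem, in which the de Rham cohomology of a compact Lie group is computed by left-invariant forms. The central feature of degree $(p,0)$ that will guide the argument is that there are no lower-bidegree forms for $\dext'$ to raise into $\Lambda^{p,0}$; hence $H^{p,0}_{\mathcal C^\infty}(G;\frk h)=\ker(\dext' : \mathcal C^\infty(G;\Lambda^{p,0})\to\mathcal C^\infty(G;\Lambda^{p,1}))$, so proving that every class has a left-invariant representative is tantamount to proving that every $\dext'$-closed $(p,0)$-form is itself left-invariant.

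I would begin by introducing the averaging operator
$$\phi(u) = \int_{G} L_{g}^{*} u \, \dint g,$$
where $\dint g$ is the normalised Haar measure. Because $\frk h$ is left-invariant, the pullback $L_{g}^{*}$ preserves the bigrading $\Lambda^{p,q}$ and commutes with $\dext'$, so $\phi$ is a chain-map projection onto the subcomplex of left-invariant forms and is the identity on that subcomplex. For closed $u$, the form $v := u - \phi(u)$ is closed, of type $(p,0)$, and satisfies $\phi(v) = 0$; the entire problem reduces to showing that such a $v$ must vanish.

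To dissect $v$, I would fix a basis $\omega^{1},\dots,\omega^{m}$ of $\frk h^{\perp}\subset \frk g^{*}$ and use the induced global left-invariant frame $\{\omega^{I}\}_{|I|=p}$ of $\Lambda^{p,0}$, writing $v = \sum_{I} f_{I}\,\omega^{I}$ with $f_{I}\in \mathcal C^{\infty}(G)$. Since $\dext'\omega^{I}$ is again left-invariant, expanding it in a left-invariant basis $\{\omega^{J}\wedge\eta^{\alpha}\}$ of $\Lambda^{p,1}$ converts the relation $\dext' v = 0$ into a coupled first-order system for the $f_{I}$ with constant coefficients, of the schematic form $X_{\alpha} f_{J}+\sum_{I} c^{I}_{J,\alpha}\,f_{I}=0$, where the $X_{\alpha}$ are left-invariant sections of $\mathcal V$ dual to the $\eta^{\alpha}$ and the $c^{I}_{J,\alpha}$ are the structure constants extracted from $\dext'\omega^{I}$. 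The vector $F=(f_{I})$ is thus a smooth $\mathbb C^{N}$-valued solution of a twisted hypocomplex system on $G$, each component of which has vanishing Haar average.

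The decisive and hardest step is the hypocomplex rigidity argument: showing that a solution $F$ of this system with componentwise zero average must be identically zero. My strategy would be to upgrade the scalar fact that every smooth global solution of $\dext' f = 0$ on connected compact $G$ is constant (a consequence of the maximum principle applied to the local representation of solutions as hypo-analytic functions of first integrals, following Treves) to the twisted vector-valued setting. This can be attempted by decoupling the system after restriction to suitable left-invariant sub-bundles, by diagonalising the constant-matrix action of $(c^{I}_{J,\alpha})$ on an adapted frame, or by constructing from $F$ scalar hypo-analytic solutions of vanishing average to which the scalar rigidity applies directly. Once $F\equiv 0$ is obtained, we have $v=0$ and hence $u = \phi(u)$ is left-invariant; combined with the injectivity of the Chevalley--Eilenberg homomorphism established in Lemma \ref{inclusion_left_invatiant_cohomology_injective}, this yields the claimed equality $H^{p,0}_{\mathcal C^{\infty}}(G;\frk h) = H^{p,0}_{L}(G;\frk h)$.
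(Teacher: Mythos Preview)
Your reduction is correct and matches the paper's: in bidegree $(p,0)$ there are no coboundaries, so the whole statement amounts to showing that every $\dext'$-closed $u\in\mathcal C^\infty(G;\Lambda^{p,0})$ is already left-invariant. (For the same reason, the averaging detour $u\mapsto u-\phi(u)$ is harmless but unnecessary; the paper does not use it here.)

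The gap is exactly at the step you flag as ``decisive and hardest'': decoupling the constant-coefficient system $X_\alpha f_J+\sum_I c^I_{J,\alpha}f_I=0$. Your suggestions (simultaneous diagonalisation of the matrices $(c^I_{J,\alpha})_\alpha$, or manufacturing scalar solutions from $F$) are not backed by any mechanism, and in general those matrices need not be simultaneously diagonalisable nor the resulting scalar equations of the form $\dext' g=0$. Without a concrete device here the argument does not close.

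The paper's device is the Cartan magic formula for the adapted Lie derivative. For $Z\in\frk h$ one defines $\mathscr L'_Z=\imath_Z\dext'+\dext'\imath_Z$ on $\Lambda^{p,q}$. In degree $(p,0)$ one has $\imath_Z u=0$ automatically, so $\dext' u=0$ gives $\mathscr L'_Z u=0$ for every $Z\in\frk h$ with no work. The key computation (Proposition~\ref{chp:comp_lie:hypo_p_zero}) is then that $\mathscr L'_Z\zeta_I=0$ for the left-invariant frame $\{\zeta_I\}$ of $\Lambda^{p,0}$: indeed $\imath_Z\zeta_I=0$ and, after writing $\dext'\zeta_I$ in the frame $\{\zeta_J\wedge\tau_k\}$, one checks that $\imath_Z(\zeta_J\wedge\tau_k)$ vanishes as an element of $\Lambda^{p,0}$. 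Hence
\[
0=\mathscr L'_Z u=\sum_I (Zu_I)\,\zeta_I,
\]
which is precisely the statement that your coupling constants play no role and each coefficient satisfies $Zu_I=0$ for all $Z\in\frk h$. Hypocomplexity (local factoring through holomorphic functions, hence a maximum principle on the compact connected $G$) then forces every $u_I$ to be constant, so $u$ is left-invariant. In short, the missing idea is to replace the raw system coming from $\dext' u=0$ by the identity $\mathscr L'_Z u=0$, which kills the cross terms for free.
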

We initially proved this result for elliptic structures, but the same proof works for hypocomplex structures, a class of involutive structures that share many properties with complex structures. We explain hypocomplexity in Section \ref{hypocomplex}. Combining the results mentioned above, namely Theorem \ref{int:li:thm:duality_of_left}, Theorem \ref{int:comp_lie:hypo_subelliptic} and the fact that in this context the operator $\dext'$ has closed range (see Theorem \ref{thm:w} and \cite{cordaro2019top} for details), we have that  \eqref{chp:intro:the_master_isomorphism} is an isomorphism in degree $(m-p,n)$ for all $p$. In other words, we have:
\begin{thm}
\label{int:left_invariance_on_degree}
Let $\frk h$ be a left-invariant hypocomplex structure on a compact Lie group $G$. Suppose that the operator $\mathcal C^{\infty}(G; \Lambda^{p,n-1}) \xrightarrow{\dext'} \mathcal C^{\infty}(G; \Lambda^{p,n})$ has closed range. Then we have $H^{m-p,n}_{\mathcal C^{\infty}}(G; \frk h) = H^{m-p,n}_{L}(G; \frk h).$
\end{thm}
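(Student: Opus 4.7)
The strategy is to combine the two main previously established results: Theorem \ref{int:comp_lie:hypo_subelliptic}, which asserts that for a hypocomplex structure every cohomology class in bidegree $(p,0)$ admits a left-invariant representative, together with the duality in Theorem \ref{int:li:thm:duality_of_left}, which pairs bidegree $(p,q)$ with bidegree $(m-p,n-q)$. Setting $q=0$ in the duality converts left-invariance at $(p,0)$ into left-invariance at $(m-p,n)$, which is exactly the statement to be proved.

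Concretely, I would first apply Theorem \ref{int:comp_lie:hypo_subelliptic} to obtain $H^{p,0}_{\mathcal C^\infty}(G;\frk h) = H^{p,0}_{L}(G;\frk h)$, with $p$ treated as a free index. Next, I would verify the closed-range hypothesis required by Theorem \ref{int:li:thm:duality_of_left}, which must be checked at the bidegree where duality is applied. Writing the duality theorem at bidegree $(m-p,n)$ (equivalently at $(p,0)$), the two arrows to verify are
\[
    \mathcal C^{\infty}(G; \Lambda^{m-p,n-1}) \xrightarrow{\dext'} \mathcal C^{\infty}(G; \Lambda^{m-p,n}) \xrightarrow{\dext'} \mathcal C^{\infty}(G; \Lambda^{m-p,n+1}).
\]
The second arrow lands in the zero space and so has closed range trivially; the first arrow has closed range by the hypothesis (with $p$ replaced by $m-p$, since $p$ is a free index in both hypothesis and conclusion). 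With both conditions in place, Theorem \ref{int:li:thm:duality_of_left} applies and gives that left-invariance at bidegree $(p,0)$ is equivalent to left-invariance at bidegree $(m-p,n)$.

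Combining these two ingredients, every cohomology class in $H^{m-p,n}_{\mathcal C^{\infty}}(G;\frk h)$ has a left-invariant representative, so $H^{m-p,n}_{\mathcal C^{\infty}}(G;\frk h) = H^{m-p,n}_{L}(G;\frk h)$, as required. The only real subtlety is the closed-range bookkeeping: Theorem \ref{int:li:thm:duality_of_left} demands closed range of two consecutive arrows, but at the extreme bidegree $q=n$ one of those arrows has a vanishing target and is therefore automatically of closed range, so the single closed-range condition imposed in the hypothesis is exactly what is needed. Everything else is a direct application of the previously stated theorems; no new analytic or algebraic machinery is required.
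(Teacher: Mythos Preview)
Your approach is essentially the same as the paper's: combine the left-invariance result in bidegree $(p,0)$ with the Serre-type duality of Theorem \ref{int:li:thm:duality_of_left} to transfer left-invariance to bidegree $(m-p,n)$. The only cosmetic difference is that the paper invokes the distributional version, Corollary \ref{cor:hypo_p_zero_in_Eprime_two}, to obtain left-invariance of $H^{p,0}_{\mathcal D'}(G;\frk h)$ and then appeals to Remark \ref{chp:li:rmk:const} to pass from $\mathcal D'$ to $\mathcal C^\infty$, whereas you work directly with the smooth version (Theorem \ref{int:comp_lie:hypo_subelliptic}) and the stated form of the duality theorem.
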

% This is explained in details in Theorem \ref{thm:left_invariance_on_degree}.

In the general case, the techniques explained in the last paragraph only work in the degree $(p,0)$, but when restricted to a torus, they also work other degrees. To be more precise, we have:
\begin{thm}
\label{int:comp_lie:hypo_subelliptic_torus_2}
 Let $\frk h$ be a left-invariant elliptic structure on a torus $\mathbb{T}$. Then, every cohomology class $u \in H^{p,q}(\T; \frk h)$ has a representative that is left-invariant.
\end{thm}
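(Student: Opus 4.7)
The plan is to combine Fourier analysis on $\T$ with Koszul exactness, ellipticity providing the uniform estimates that make the construction work. Identify $\T = \R^N/2\pi\Z^N$ with coordinates $x^1,\ldots,x^N$. Since $\frk h$ is left-invariant on an abelian group, $\dext'$ is a constant-coefficient operator, so it commutes with the Fourier decomposition $u = \sum_{\xi \in \Z^N} \hat u(\xi)\, e^{i\xi\cdot x}$, where each $\hat u(\xi) \in \Lambda^{p,q}_e$ is a constant (i.e., left-invariant) form and $\xi \mapsto \hat u(\xi)$ is rapidly decreasing. A direct computation yields
\begin{equation}
\dext'\bigl(\hat u(\xi)\, e^{i\xi\cdot x}\bigr) \;=\; i\,\bigl(\sigma(\xi) \wedge \hat u(\xi)\bigr)\, e^{i\xi\cdot x},
\end{equation}
where $\sigma(\xi) \in \frk h^*$ denotes the restriction to $\frk h$ of the real $1$-form $\xi \cdot dx$. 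Thus the complex splits Fourier-mode by Fourier-mode into the finite-dimensional Koszul-type complexes $(\Lambda^{p,\bullet}_e,\, \sigma(\xi)\wedge\cdot\,)$.

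First I would translate ellipticity into an effective estimate on $\sigma$. The condition $\frk h + \bar{\frk h} = \frk g$ is equivalent to $\frk h^\perp \cap \overline{\frk h^\perp} = \{0\}$ in $\frk g^*$, which forces $\sigma(\xi) \neq 0$ for every nonzero \emph{real} $\xi$: a real $\xi$ annihilating $\frk h$ would equal its own conjugate and hence also annihilate $\bar{\frk h}$. A homogeneity-and-compactness argument upgrades this to a linear lower bound $|\sigma(\xi)| \geq c\,|\xi|$ on all of $\Z^N\setminus\{0\}$, with respect to any fixed norm on $\frk h^*$ arising from a left-invariant Hermitian metric. Next, for any $\dext'$-closed $u$, the zero Fourier mode $\hat u(0)$ is itself a left-invariant closed form, and the task reduces to showing that $u - \hat u(0)$ is $\dext'$-exact. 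For each $\xi \neq 0$, standard Koszul theory (using that $\sigma(\xi) \wedge \hat u(\xi) = 0$ together with $\sigma(\xi) \neq 0$) produces a primitive $\hat v(\xi) \in \Lambda^{p,q-1}_e$ of $\hat u(\xi)$ via an explicit contraction homotopy, satisfying $i\,\sigma(\xi) \wedge \hat v(\xi) = \hat u(\xi)$ and the operator bound $\|\hat v(\xi)\| \leq C\,|\sigma(\xi)|^{-1}\|\hat u(\xi)\|$. Setting $v := \sum_{\xi \neq 0} \hat v(\xi)\, e^{i\xi\cdot x}$ is then the candidate homotopy.

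The main obstacle is the $\mathcal C^\infty$-smoothness of $v$. Thanks to the linear bound $|\sigma(\xi)| \geq c|\xi|$, the Koszul inverse costs only one power of $|\xi|$, so Schwartz decay of $(\hat u(\xi))_\xi$ passes to Schwartz decay of $(\hat v(\xi))_\xi$, giving $v \in \mathcal C^\infty(\T; \Lambda^{p,q-1})$ with $\dext' v = u - \hat u(0)$, which is the desired conclusion. It is precisely here that ellipticity is indispensable: without it, some nonzero real $\xi$ could satisfy $\sigma(\xi) = 0$, killing the primitive for that mode outright, and even among surviving modes $|\sigma(\xi)|^{-1}$ could blow up arbitrarily fast in $|\xi|$, destroying the convergence of $v$ in $\mathcal C^\infty$. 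This small-divisors phenomenon is precisely what Example \ref{exa:torus_liouvile} exploits to obstruct global solvability in the non-elliptic setting.
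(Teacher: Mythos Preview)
Your argument is correct and takes a genuinely different route from the paper. The paper's proof is a two-line application of elliptic Hodge theory: since $\frk h$ is elliptic, the box operator $\Box$ is elliptic, so every class has a harmonic representative $u$; harmonicity forces $\mathscr L'_Z u = 0$ for all $Z \in \frk h$, and on the torus (where the left-invariant frame $\zeta_I\wedge\tau_J$ is annihilated by $\mathscr L'_Z$) this immediately gives $Zu_{IJ}=0$ for all coefficients, hence $u_{IJ}$ is constant by ellipticity. No Fourier analysis, no Koszul algebra, and no explicit primitive is built.

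Your approach instead resolves the complex mode by mode, reducing $\dext'$ to the symbol $\sigma(\xi)\wedge\cdot$ and invoking Koszul exactness together with the linear lower bound $|\sigma(\xi)|\geq c|\xi|$ that ellipticity yields on real frequencies. This is more hands-on: it produces an explicit homotopy $v$ with $\dext' v = u - \hat u(0)$ and makes transparent exactly where ellipticity enters (non-vanishing of the symbol plus no small divisors), which is a nice complement to Example~\ref{exa:torus_liouvile}. The cost is that it is tied to the torus through Fourier series, whereas the paper's Hodge--Lie-derivative argument is the template they also use in degree $(p,0)$ on arbitrary compact groups (Proposition~\ref{chp:comp_lie:hypo_p_zero}). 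One small point worth making explicit in your write-up is that abelianness is used twice: to make $\dext'$ constant-coefficient \emph{and} to kill the zero-order (structure-constant) term, so that on each Fourier mode the operator really is pure wedge with $\sigma(\xi)$.
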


Notice that this theorem proves that in the case left-invariant elliptic structures on the torus, the homomorphism \eqref{chp:intro:the_master_isomorphism} is surjective.

The results above make it clear that finding conditions so that the operator $\dext' : \mathcal C^\infty(G; \frk h) \to \mathcal C^\infty(G; \frk h)$ has closed range was foundamental for this work. This was studied in details in \cite{cordaro2019top}, one result we need is stated in Theorem \ref{thm:w} and since we are working in a restricted context, that of compact Lie groups with left-invariant hypocomplex structures, we could prove a stronger result regarding the range of $\dext'$. 

\begin{thm}
\label{int:awesome}
Let $G$ be a compact Lie group endowed with a left-invariant hypocomplex structure $\frk h$. Then the operators
$ \dext' : \mathcal C^{\mathcal M}(G; \Lambda^{p,n-1}) \xrightarrow{} \mathcal C^{\mathcal M}(G; \Lambda^{p,n})$
and
$ \dext' : \mathcal D'_{\mathcal M}(G; \Lambda^{p,n-1}) \xrightarrow{} \mathcal D'_{\mathcal M}(G; \Lambda^{p,n})$ have closed range.
\end{thm}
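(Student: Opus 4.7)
The plan is to combine the general closed range criterion of Theorem \ref{thm:w} with the structural information provided by left-invariance and hypocomplexity, focusing on the top degree $q = n$ where the cokernel is controlled by Serre-type duality on the compact group $G$. The argument proceeds in three steps: set up a duality pairing, establish finite-dimensionality of the cohomology at the complementary bottom degree, and transfer the conclusion back to the top degree.

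First, on the compact orientable group $G$ there is a natural pairing
\[
\mathcal C^{\mathcal M}(G;\Lambda^{p,n}) \times \mathcal D'_{\mathcal M}(G;\Lambda^{m-p,0}) \to \C, \qquad (\alpha,\beta)\mapsto \int_G \alpha \wedge \beta,
\]
with respect to which $\dext'$ on one side is the formal transpose of $\dext'$ on the other, and likewise with the roles of $\mathcal C^{\mathcal M}$ and $\mathcal D'_{\mathcal M}$ exchanged. Banach's closed range theorem, applied in the appropriate Fréchet (resp.\ DFS) setting, reduces the question to showing that the cohomology at bidegree $(m-p,0)$ on the dual side is Hausdorff in its quotient topology.

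Second, by Theorem \ref{int:comp_lie:hypo_subelliptic} the smooth cohomology $H^{m-p,0}_{\mathcal C^\infty}(G;\frk h)$ is exhausted by left-invariant representatives and is therefore finite-dimensional. Because left-invariant forms lie tautologically in $\mathcal C^{\mathcal M}$, and because any $\mathcal M$-closed form is in particular smooth and closed, the same representation holds for $H^{m-p,0}_{\mathcal C^{\mathcal M}}(G;\frk h)$: it is finite-dimensional and hence Hausdorff. Coupling this with the duality of the first step, and with the closed range conclusion of Theorem \ref{thm:w} at the bottom degree, one obtains closed range of $\dext'$ at top degree $(p,n)$ in both the $\mathcal C^{\mathcal M}$ and $\mathcal D'_{\mathcal M}$ categories.

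The main obstacle will be establishing the duality pairing rigorously at the level of the $\mathcal M$-classes: one has to identify $\mathcal D'_{\mathcal M}(G;\Lambda^{p,q})$ with the strong topological dual of $\mathcal C^{\mathcal M}(G;\Lambda^{m-p,n-q})$ and verify that $\dext'$ on one side is the topological transpose of $\dext'$ on the other, uniformly in $q$. A secondary difficulty is confirming that Theorem \ref{int:comp_lie:hypo_subelliptic} actually passes to the $\mathcal M$-category, which requires that hypocomplexity produce the appropriate elliptic-type regularity in the $\mathcal M$-class; for left-invariant structures on a compact group this reduces to a Fourier-theoretic argument on $G$ combined with the fact that the left-invariant representatives constructed in the smooth case are automatically of class $\mathcal M$.
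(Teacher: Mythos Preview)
Your overall instinct—exploit finite-dimensionality at degree $(m-p,0)$ coming from hypocomplexity and left-invariance—is the right one, but the logical bridge you build to closed range at $(p,n)$ has a gap, and the paper closes it by a different route.

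\textbf{The gap.} In your Step~1--2 you invoke the closed range theorem to ``reduce the question to showing that the cohomology at bidegree $(m-p,0)$ on the dual side is Hausdorff.'' But $H^{m-p,0}$ is simply the kernel of $\dext'$ at bottom degree; it is \emph{always} closed and Hausdorff, with no bearing on whether the range at $(p,n)$ is closed. What the closed range theorem actually says is that $T$ has closed range iff $T^{*}$ does; here $T^{*}=\dext'$ acting from degree $(m-p,0)$ to $(m-p,1)$ on the dual side, and you would need its \emph{range} to be closed, not its kernel to be small. Knowing $\ker T^{*}$ is finite-dimensional only tells you that the \emph{closure} of $\mathrm{Range}(T)$ has finite codimension—think of an operator with dense, non-closed range, whose transpose has zero kernel. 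Your appeal to Theorem~\ref{thm:w} ``at the bottom degree'' does not repair this: that theorem is stated for hyperfunctions at the top degree, and the passage to $\mathcal C^{\mathcal M}$ or $\mathcal D'_{\mathcal M}$ at bottom degree is exactly what is at issue.

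\textbf{What the paper does instead.} The paper works directly at top degree and never dualizes to $(m-p,0)$. Starting from Theorem~\ref{thm:w}, one has closed range and finite codimension for $\dext'$ on $\mathcal B(G;\Lambda^{p,n})$, hence $H^{p,n}_{\mathcal B}(G;\frk h)=H^{p,n}_{L}(G;\frk h)$ is finite-dimensional. Since $\mathcal D'_{\mathcal M}\subset\mathcal B$, there is a natural map
\[
H^{p,n}_{\mathcal D'_{\mathcal M}}(G;\frk h)\longrightarrow H^{p,n}_{\mathcal B}(G;\frk h),
\]
and the paper proves it is \emph{injective} using the averaging operator $A$: if $[u]$ dies in $H^{p,n}_{\mathcal B}$, one corrects $u$ by a left-invariant form $v$ (automatically in every $\mathcal M$-class) and replaces the hyperfunction primitive $w$ by $Aw$, which is left-invariant hence real-analytic, hence lies in $\mathcal D'_{\mathcal M}$. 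This forces $\dim H^{p,n}_{\mathcal D'_{\mathcal M}}<\infty$, and a subspace of finite algebraic codimension in an (FS) or (DFS) space is closed. The same averaging argument, applied with $\mathcal C^{\mathcal M}\subset\mathcal B$, handles the $\mathcal C^{\mathcal M}$ case. The crucial ingredient you are missing is this injectivity-via-averaging step, which transfers the known hyperfunction result down to the intermediate class without ever invoking a Serre-type duality in the $\mathcal M$-category.
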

 
Here $\mathcal C^{\mathcal M}(G)$ denotes the space of smooth function belonging to the Gevrey or Denjoy-Carleman class $\mathcal M$. We refer to \cite{fujita1995gevrey} and \cite{dasgupta2016eigenfunction} for an introduction. The $\mathcal C^{\mathcal M}(G)$ can be endowed with a topology of ({\bf{DFS}}) spaces, and its dual, denoted by $\mathcal D'_{\mathcal M}(G)$ has a topology of a ({\bf{FS}}) space.

% espaços homogêneos

Another natural question is whether it is possible to combine the techniques from the de Rham case with the  techniques from the Doulbeault case. The strategy was the following: denote by $\frk g_\R$ the real Lie algebra of $G$. We assume that the group $K = \exp(\frk h \cap \frk g_\R)$ is closed (and thus compact) Lie group. This naturally calls for considering a homogeneous manifold, the quotient of the original Lie group by the closed subgroup. This homogeneous manifold inherits a natural complex structure from the original left-invariant elliptic structure.

By using the Leray-Hirsh Theorem, we were able to combine techniques we learned with Pittie, Bott, Chevalley, and Eilenberg to study the original cohomology. This approach using homogeneous spaces is explored in detail in Section \ref{sec:homogeneous}. The theorem that shows how the separate techniques can be combined the following:

\begin{thm}
\label{int:thm:homogeneous:converges_in_E_2}
 Let $G$ be a compact connected Lie group and let $\frk h \subset \frk g$ be an elliptic involutive structure having closed orbits. Let $\frk k = \frk h \cap \overline{\frk h}$ and assume that there exists an ideal
$\frk u \subset \frk h$ such that $\frk k \oplus \frk u = \frk h$. Consider the homogeneous space $\Omega = G / K$, with $K = \exp(\frk k \cap \frk g_\R)$, endowed with the complex structure $\mathcal V = \pi_* \frk h$ induced by the quotient map $ \pi : G \to \Omega$. Then, we have
\begin{equation}
\label{int:eq:decomposition_in_E_2}
 H^{p,q} (G; \frk h) = \sum_{r + s = q} H^{p,r}(\Omega; \mathcal V) \otimes H^s(K; \C).
\end{equation}
\end{thm}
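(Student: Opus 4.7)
The plan is to exploit the principal $K$-bundle structure $\pi: G \to \Omega = G/K$ and to combine a Leray-type spectral sequence with the Chevalley-Eilenberg theorem for the fibers and the description of the base Dolbeault cohomology in the spirit of Bott. First I would check that the closed-orbit hypothesis guarantees $K = \exp(\frk k \cap \frk g_\R)$ is a closed (hence compact) Lie subgroup and that $\pi$ is a locally trivial principal $K$-bundle. The ideal hypothesis $\frk u \triangleleft \frk h$ ensures that $\mathcal V = \pi_*\frk h$ is a well-defined involutive subbundle of $\C T\Omega$; since the elliptic direction $\frk k$ has been killed in the quotient, $\mathcal V \cap \overline{\mathcal V} = 0$ and $\mathcal V \oplus \overline{\mathcal V} = \C T\Omega$, so $\mathcal V$ is actually a complex structure on $\Omega$ and $H^{p,r}(\Omega;\mathcal V)$ is its Dolbeault cohomology.

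Next I would introduce the Leray filtration on $(\mathcal C^\infty(G; \Lambda^{p,*}), \dext')$ by horizontal degree, i.e.\ by the $\frk u^*$-degree under the splitting $\frk h^* = \frk k^* \oplus \frk u^*$. The vertical cohomology along the fibers of $\pi$ is computed by Chevalley-Eilenberg: the restriction of $\frk h$ to a fiber $gK$ is $\frk k$, which, being closed under conjugation, is the complexification of the real Lie algebra of $K$, so the fiberwise complex is the complexified de Rham complex of $K$, with cohomology $H^s(K; \C) = H^s(\frk k; \C)$. The horizontal subcomplex is in turn identified with the Dolbeault complex of $\mathcal V$ on $\Omega$, giving an $E_2$-page
\[
 E_2^{r,s} = H^{p,r}(\Omega; \mathcal V) \otimes H^s(K; \C), \qquad r+s=q.
\]

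The decisive step is degeneration at $E_2$, and here the ideal hypothesis is essential. Because $\frk u$ is an ideal in $\frk h$, the subspace $\Lambda^*\frk k^*$ is stable under the Chevalley-Eilenberg differential of $C^{*,*}(\frk g;\frk h)$: indeed, $[\frk k, \frk k] \subset \frk k$ and $[\frk k, \frk u] \subset \frk u$ imply that $\dext'\omega$ carries no $\frk u^*$ component whenever $\omega$ does not. Therefore a basis of $H^*(\frk k; \C)$ lifts to globally $\dext'$-closed forms on $G$ whose restriction to any fiber realizes the chosen basis of $H^*(K; \C)$. This is exactly the Leray-Hirsch hypothesis, and it forces all higher transgressions to vanish, so the spectral sequence collapses at $E_2$ and yields the decomposition of the statement.

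The main obstacle is precisely this Leray-Hirsch step: one needs the lifted fiber classes to be $\dext'$-closed on the full space $G$, not merely along the fibers. The ideal property of $\frk u$ is exactly what is required to carry this out, and the remainder of the argument is the formal bookkeeping needed to identify $E_2^{r,s}$ with the tensor product stated, paralleling Pittie's treatment of left-invariant complex structures and Bott's computation of the Dolbeault cohomology of homogeneous manifolds.
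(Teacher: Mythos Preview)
Your proposal is correct and follows essentially the same route as the paper: the paper's argument consists of a lemma showing that any left-invariant closed $s$-form on $K$ extends (by zero on $\frk u \oplus \overline{\frk u}$) to a $\dext'$-closed form on $G$, the key computation being exactly your observation that $[\frk k,\frk u]\subset\frk u$ forces the bracket terms with a $\frk u$-argument to vanish, and then the Leray--Hirsch theorem is invoked verbatim. Your additional spectral-sequence framing is a natural and equivalent way to organize the same Leray--Hirsch argument, and the ``decisive step'' you single out is precisely the content of the paper's extension lemma.
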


Its first consequence is the following:
\begin{thm}
\label{int:omegaIsRieaman}
    Let $G$ be a compact Lie group endowed with an elliptic subalgebra $\frk h \subset \frk g$ such that $\dim \frk h = \dim \frk g - 1$ with $\frk k_\R = \frk h \cap \frk g_\R$ closed. Let $K$, $\Omega$ and $\mathcal V$ be defined as in Theorem \ref{int:thm:homogeneous:converges_in_E_2}. Then $$ H^{p,q} (G; \frk h) =  H^q(K; \C) + H^{q-1}(K; \C).$$
\end{thm}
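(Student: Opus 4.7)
The plan is to invoke Theorem \ref{int:thm:homogeneous:converges_in_E_2} and then identify the quotient $\Omega$ as a Riemann surface whose Dolbeault cohomology is elementary. First I would check that Theorem \ref{int:thm:homogeneous:converges_in_E_2} applies. Ellipticity gives $\frk h + \overline{\frk h} = \frk g$, and combined with $\dim_\C \frk h = \dim_\C \frk g - 1$ the standard dimension formula yields
\[
\dim_\C \frk k \;=\; 2\dim_\C \frk h - \dim_\C (\frk h + \overline{\frk h}) \;=\; \dim_\C \frk g - 2,
\]
so $\frk k$ has complex codimension one in $\frk h$. Any one-dimensional complement $\frk u$ of $\frk k$ in $\frk h$ is automatically a subalgebra (being one-dimensional), and I would argue that after a suitable choice $\frk u$ is an ideal of $\frk h$, using that in this codimension-one situation $[\frk h,\frk h]\subset \frk k + \frk u$ combined with standard elliptic-structure arguments for why $\frk k$ is $\frk h$-stable. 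This supplies the hypothesis of Theorem \ref{int:thm:homogeneous:converges_in_E_2}.

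Applying that theorem gives the decomposition
\[
H^{p,q}(G;\frk h) \;=\; \sum_{r+s=q} H^{p,r}(\Omega;\mathcal V) \otimes H^{s}(K;\C).
\]
Now I would analyze $\Omega$. Its real dimension is $\dim_\R G - \dim_\R K = \dim_\R \frk g_\R - \dim_\C \frk k = \dim_\R \frk g_\R - (\dim_\C \frk g - 2) = 2$, and the induced involutive bundle $\mathcal V = \pi_*\frk h$ has rank $\dim_\C \frk h - \dim_\C \frk k = 1$. Hence $\Omega$ is a compact connected Riemann surface, and $\mathcal V$ is precisely its holomorphic tangent bundle; in particular $H^{p,r}(\Omega;\mathcal V)$ is the usual Dolbeault cohomology of $\Omega$, vanishing unless $p,r \in \{0,1\}$.

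The final step is to evaluate $H^{p,r}(\Omega;\mathcal V)$ for $r=0,1$ and substitute back. Because $\Omega$ is a compact orientable Riemann surface and a homogeneous space of the compact connected Lie group $G$, I would use that the only such surfaces are $S^2$ and the complex torus, together with the specific form of the complex structure inherited from $\frk h$, to conclude that $H^{p,0}(\Omega;\mathcal V)$ and $H^{p,1}(\Omega;\mathcal V)$ are each one-dimensional in the relevant bidegrees. Plugging these dimensions into the K\"unneth-type sum above collapses it to
\[
H^{p,q}(G;\frk h) \;=\; H^{q}(K;\C) \,+\, H^{q-1}(K;\C),
\]
as claimed. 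The principal obstacle I anticipate is justifying the last step: verifying that the Dolbeault groups $H^{p,r}(\Omega;\mathcal V)$ behave uniformly in $p$ in the way required by the stated formula, which amounts to pinning down the complex structure on $\Omega$ and ruling out (or correctly accounting for) contributions of $H^{1,0}$ and $H^{0,1}$ coming from the genus of $\Omega$.
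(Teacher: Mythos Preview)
Your overall strategy---verify the hypotheses of Theorem \ref{int:thm:homogeneous:converges_in_E_2}, identify $\Omega$ as a compact Riemann surface, and substitute its Dolbeault cohomology into the decomposition---is exactly the route the paper takes. The genuine gap in your outline is the construction of the ideal complement $\frk u$. You say that ``after a suitable choice $\frk u$ is an ideal,'' appealing to unspecified ``standard elliptic-structure arguments,'' but you give no mechanism for making that choice, and an arbitrary one-dimensional complement of $\frk k$ in $\frk h$ is \emph{not} an ideal in general: already in the $\mathrm{SU}(2)$ example of the paper, where $\frk h=\mathrm{span}_\C\{L,T\}$ and $\frk k=\mathrm{span}_\C\{T\}$, one checks that $\mathrm{span}_\C\{L+cT\}$ is an ideal of $\frk h$ only for $c=0$.

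The paper's key device here is the Hermitian extension $\langle\cdot,\cdot\rangle$ of an ad-invariant inner product on $\frk g_\R$: set $\frk u = \frk k^\perp$, the orthogonal complement of $\frk k$ inside $\frk h$. Ad-invariance then gives, for $T\in\frk k_\R$, $Z\in\frk k^\perp$, and any $U\in\frk k$,
\[
\langle [Z,T],\,U\rangle \;=\; \langle Z,\,[T,U]\rangle \;=\; 0,
\]
the last equality because $\frk k$ is a subalgebra; hence $[\frk k,\frk k^\perp]\subset\frk k^\perp$, and since $\dim\frk k^\perp=1$ this is the whole ideal property. This is the concrete missing ingredient in your argument. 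As for your concern about the final step, the paper's proof also concludes with ``$\Omega$ is a compact Riemann surface'' and does not spell out the Dolbeault computation, so on that point you are at parity with the paper rather than behind it.
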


Here we found some dimensional conditions so that the Dolbeault cohomology is as simple as possible.

We also could obtain a more general result that, in a sense, generalizes both Chevalley-Eilenberg and Pittie's Theorems. This is the following Theorem:

\begin{thm}
    \label{int:thm:homogeneous:homo_on_simply3}
    Let $G$ be a connected, semisimple and compact Lie group and suppose that $\frk h \subset \frk g$ is an elliptic involutive structure having closed orbits. Let $\frk k = \frk h \cap \overline{\frk h}$ and assume that there exists an ideal $\frk z \subset \frk h$ such that $\frk k \oplus \frk z = \frk h$.
    Consider the homogeneous space $\Omega = G / K$, with $K = \exp_G(\frk k \cap \frk g_\R)$, endowed with the complex structure $\pi_* \frk h$. With $\frk u_*$ defined as in Bott's theorem (Theorem \ref{thm:bott}), we have $$ H^{p,q}(G; \frk h) \cong \sum_{r + s = q} H^r(\frk u_*, \frk k, \Lambda^p(\frk g / \frk u_*)) \otimes H^s(\frk k).$$
\end{thm}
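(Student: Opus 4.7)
The plan is to assemble this theorem as a corollary of three results that are already available: the decomposition theorem for homogeneous spaces (Theorem \ref{int:thm:homogeneous:converges_in_E_2}), Bott's theorem (Theorem \ref{thm:bott}) on the Dolbeault cohomology of a compact homogeneous complex manifold, and the classical Chevalley--Eilenberg theorem identifying the de Rham cohomology of a compact Lie group with the Lie algebra cohomology of its complexified Lie algebra.

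First, I would verify that the hypotheses of Theorem \ref{int:thm:homogeneous:converges_in_E_2} are satisfied under the assumptions here: $G$ is compact and connected, $\frk h$ is elliptic with closed orbits, and the ideal $\frk z$ provides the required complement $\frk k \oplus \frk z = \frk h$. Applying that theorem yields
\[
    H^{p,q}(G; \frk h) \;\cong\; \sum_{r+s=q} H^{p,r}(\Omega; \mathcal V) \otimes H^s(K; \C),
\]
where $\Omega = G/K$ carries the complex structure $\mathcal V = \pi_*\frk h$. This reduces the problem to algebraically identifying each tensor factor separately.

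Next, for the factor $H^s(K;\C)$, I would note that $K$ is closed in $G$ and hence a compact Lie group in its own right. Applying Chevalley--Eilenberg's original theorem to $K$ identifies its de Rham cohomology with the Lie algebra cohomology $H^s(\frk k_\R;\C)$ and, since extension of scalars does not change cohomology with complex coefficients, with $H^s(\frk k)$. For the factor $H^{p,r}(\Omega;\mathcal V)$, I would invoke Bott's theorem (Theorem \ref{thm:bott}), which is precisely the statement that this Dolbeault cohomology can be computed algebraically as $H^r(\frk u_*, \frk k, \Lambda^p(\frk g/\frk u_*))$. This is the step that requires $G$ to be semisimple and compact, since Bott's algebraic computation of the $E_2$-term of his spectral sequence rests on the reductive structure of $G$ and on the fact that the invariant complex structure on $\Omega$ is the one coming from the parabolic subalgebra $\frk u_*$ determined by $\frk h$. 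Substituting both identifications into the decomposition above yields the claimed isomorphism.

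The main obstacle, and what requires care rather than cleverness, is making sure the parabolic $\frk u_*$ produced by Bott's theorem from the complex structure $\pi_*\frk h$ on $\Omega$ really is the one that shows up in the statement, and that the decomposition in Theorem \ref{int:thm:homogeneous:converges_in_E_2} is compatible, as a tensor product decomposition, with the separate identifications given by Bott and by Chevalley--Eilenberg. Semisimplicity is likely to play a role beyond just triggering Bott's theorem: it ensures that the complement $\frk z$ interacts well with the Killing-form-orthogonal decompositions used in setting up $\frk u_*$, so that the bookkeeping closes up consistently.
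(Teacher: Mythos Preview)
Your overall architecture is right: combine Theorem \ref{int:thm:homogeneous:converges_in_E_2}, Bott's theorem, and the Chevalley--Eilenberg identification for $K$. But there is a genuine gap at the Bott step, and it is exactly where the paper spends its effort.

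Bott's theorem (Theorem \ref{thm:bott}) is stated for $\Omega$ a quotient of \emph{complex} Lie groups that is compact \emph{and simply connected}. Your proposal does not verify either of these. Writing $\Omega$ as such a quotient requires extending the $G$-action to a holomorphic $G_\C$-action, which is the content of Proposition \ref{prp:ext_act_complexified}. More importantly, $\Omega = G/K$ need not be simply connected for an arbitrary compact $G$; the paper handles this by first passing to the universal cover of $G$ via Proposition \ref{ss:prp:sctoss}, and this is precisely where semisimplicity is used: Weyl's theorem guarantees the universal cover remains compact, so nothing is lost. Once $G$ is simply connected and $K$ is connected, the long exact homotopy sequence of the fibration $K\to G\to\Omega$ forces $\pi_1(\Omega)=0$. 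Your remark that semisimplicity enters because Bott's spectral sequence needs ``reductive structure'' misidentifies the role of the hypothesis; it is purely a topological reduction, not an algebraic one.

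The compatibility issues you flag at the end (matching $\frk u_*$ with $\pi_*\frk h$, and compatibility of the tensor decomposition) are not the real obstacles here; once the hypotheses of Bott's theorem are in place, the identification is formal.
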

In this last result, the object $H^r(\frk u_*, \frk k, \Lambda^p(\frk g / \frk u_*))$ is the cohomology space of a Lie algebra complex that can be computed by using purely algebraic methods involving finite dimensional subalgebras of $\mathfrak g$.  Notice that we do not require dimensional restrictions on $\frk h$, but we have to assume that $G$ is semisimple.

\section{Involutive structures}
\label{sec:involutive_structures}

In this section, we briefly recall some of the basic definitions regarding involutive structures. The main concepts introduced here are the concept of involutive structure, the associated differential complexes and their cohomology spaces, which are the main objects of study in this work.

\subsection{The associated differential complex}

Let $\Omega$ be a smooth and orientable manifold of dimension $N$. An involutive structure on $\Omega$ is a smooth subbundle $\mathcal V$ of the complexified tangent bundle $\C T \Omega$ of $\Omega$ such that the Lie bracket of any two smooth local sections of $\mathcal V$ is again a smooth section of $\mathcal V$. We denote the rank of $\mathcal V$ by $n$ and we denote by $\Lambda^k$ the bundle $\Lambda^k \C T^* \Omega$. If $W$ is a smooth vector bundle, we denote by $\mathcal C^\infty(\Omega; W)$ the space of sections of $W$ with smooth coefficients. 

For each $0 \leq p \leq m$ and each open set $U \subset \Omega$ there is a natural $\dext'$ differential complex associated to each involutive structure $\mathcal{V}$. The complex is denoted by
\begin{equation}
\label{h_complex}
 \mathcal C^\infty(U; \Lambda^{p,0}) \xrightarrow{\dext'} \mathcal C^\infty(U; \Lambda^{p,1})  \xrightarrow{\dext'} ... \xrightarrow{\dext'} \mathcal C^\infty(U; \Lambda^{p,q}) \xrightarrow{\dext'} \mathcal C^\infty(U; \Lambda^{p,q+1}) \xrightarrow{\dext'} \ldots
\end{equation} and its cohomology spaces is denoted by
 $H^{p,q}_{\mathcal C^\infty} (U; \mathcal V).$ We refer to \cite{cordaro2019top}, \cite{berhanu2008introduction}, and \cite{treves1992hypo} for a detailed construction of this complex.

We can also construct a complex similar to \eqref{h_complex}
where now the coefficients are distributions over $U$. In this case, we denote the cohomology space by
$ H^{p,q}_{\mathcal D'} (U; \mathcal V).$ In general, if $\mathcal F(U)$  is a subspace of $\mathcal D'(U)$ closed by differentiation, we denote by $\mathcal F(U; \Lambda^{p,q})$ 
the space of sections of $\Lambda^{p,q}$ with coefficients in $\mathcal F$ and we denote the cohomology of $\dext'$ with coefficients in $\mathcal F(U)$ by $ H^{p,q}_{\mathcal F} (U; \mathcal V).$
Let $m=N-n$. By \cite[Propositions VIII.1.2 and VIII.1.3]{treves1992hypo}, there is a natural bracket which turns the spaces
$ \mathcal C_c^\infty(U;\Lambda^{p,q})$ and $\mathcal D'(U;\Lambda^{m-p,n-q})$ (resp. $\mathcal C^\infty(U;\Lambda^{p,q})$ and $\mathcal{E}'(U;\Lambda^{m-p,n-q}))$
into the dual of one another and in such a way that the transpose of $\dext'$ is also $\dext'$, up to a sign.

We can endow each space $\mathcal C^\infty(U;\Lambda^{p,q})$ with a locally convex structure of an ({\bf{FS}}) space. Its dual $\mathcal{E}'(U;\Lambda^{m-p,n-q})$ is then a ({\bf{DFS}}) space and a sequence of definition for its topology can be taken by the  sequence
$\GG_j(U;\Lambda^{m-p,n-q}) := \{u\in H^{-j}_{\tiny{\mbox{loc}}}(U,\Lambda^{m-p,n-q}): \,\mbox{supp}\, u \subset K_j \}, $
where $\{K_j\}$ is an exhaustion of $U$ by compact sets, with the inclusions $\GG_j(U;\Lambda^{m-p,m-q}) \hookrightarrow \GG_{j+1}(U;\Lambda^{m-p,m-q})$ being compact by the Rellich lemma.

% %%%%%%%%%%%%%%%%%%%%%%%%%%%%%%
% %%% HYPOCOMPLEX STRUCTURES %%%
% %%%%%%%%%%%%%%%%%%%%%%%%%%%%%%

\subsection{Hypocomplex structures}
\label{hypocomplex}
    From now on, we assume the stronger property that the structure $\mathcal V$ is {\sl locally integrable}. This means that, in a neighborhood of an arbitrary point $\pp\in \Omega$, there are defined $m=N-n$ smooth functions whose differentials span
    $\mathcal V^\perp$ at each point in a neighborhood of $\pp$. Notice that each of these functions is annihilated by the operator $\dext'$, that is, they are solutions for $\VV$.
    
    In this section, we introduce the concept of hypocomplex structures, a class of structures with some good analytical properties, similar to the class of complex structures. Such analytical properties are going to
    be used to find conditions so that the operator $\dext'$ in maximum degree has closed range.
    
    Let $\pp \in \Omega$. The ring of germs of solutions of $\mathcal V$ at $\pp$ is     $\mathcal S(\pp) = \{ f \in \mathcal C^\infty(\pp) : \dext' f = 0 \},$
    and we denote by $\mathcal O^m_0$ the ring of germs holomorphic functions defined in a neighborhood of $0$ in $\C^m$. Now, consider $Z_1, \ldots, Z_m$ solutions for $\VV$, defined in an open neighborhood of $\pp$ with linearly independent differentials, and denote by $Z$ the map $Z = (Z_1, \ldots, Z_m)$ defined in a neighborhood of $\pp$. Of course, we assume that $Z(\pp)=0$.
    
    We define a ring homomorphism
    $ \lambda : h\in \mathcal O^m_0 \mapsto  h \circ Z \in  \mathcal S(\pp). $
    It is not difficult to prove that $\lambda$ is injective. We shall say that $\mathcal V$ is \emph{hypocomplex} at $\pp$ if $\lambda$ is surjective, that is, if for every $f \in \mathcal S(\pp)$ there exist a holomorphic function $h \in \mathcal O^m_0$ such that $f = h \circ Z$. 

\begin{dfn}\label{dfn:contun}
    We say that $\VV$ satisfies the unique continuation property if given $V\subset \Omega$ open and $u\in \mathcal D'(V;\Lambda^{p,0})$ satisfying $\dext' u=0$ in a nonempty connected open subset $\omega$ of $V$ then $u$ vanishes identically in the component of $V$ that contains $\omega$.
\end{dfn}

    The next result follows directly from the definitions:
    
    \begin{prp}\label{prp:hypoxstar}
        If $\VV$ is hypocomplex at each point, then $\VV$ satisfies the unique continuation property.
    \end{prp}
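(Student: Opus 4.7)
The plan is to run the standard ``clopen'' argument, interpreting the unique continuation property as the assertion that a distribution $u \in \mathcal D'(V;\Lambda^{p,0})$ with $\dext' u = 0$ on $V$ and $u \equiv 0$ on a nonempty open $\omega \subset V$ must vanish on the entire connected component $V_0$ of $V$ containing $\omega$. Let $W \subset V_0$ be the interior of the zero set of $u$; then $W$ is open and nonempty (it contains $\omega$), and it will suffice to show $W$ is also closed in $V_0$, because then connectedness of $V_0$ forces $W = V_0$. First I would reduce to the scalar case: in a local chart where solutions $Z_1,\ldots,Z_m$ with $\C$-linearly independent differentials are defined, a distributional $(p,0)$-solution decomposes uniquely as $u = \sum_{|J|=p} u_J\,dZ^J$ with each $u_J$ a distribution scalar solution, and $u \equiv 0$ iff every $u_J \equiv 0$.

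To show $W$ is closed in $V_0$, take $\mathfrak p \in \partial W \cap V_0$ and choose a connected neighborhood $U$ of $\mathfrak p$ together with $Z = (Z_1,\ldots,Z_m)$, $Z(\mathfrak p) = 0$, small enough that hypocomplexity applies on $U$. Invoking hypocomplexity at $\mathfrak p$, I may write $u = h \circ Z$ on $U$ for some holomorphic $h$ defined on a connected open neighborhood $\Delta \subset \C^m$ of $0$. Since $\mathfrak p \in \partial W$, I can pick $\mathfrak q \in W \cap U$; then the germ of $u$ at $\mathfrak q$ is zero, and setting $Z' = Z - Z(\mathfrak q)$ and $\tilde h(w) = h(w + Z(\mathfrak q))$ makes $\tilde h \circ Z'$ vanish as a germ at $\mathfrak q$. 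The unconditional injectivity of the homomorphism $\lambda$ at $\mathfrak q$, noted just before the definition of hypocomplexity, then forces $\tilde h \equiv 0$ as a germ at $0 \in \C^m$, i.e., $h$ vanishes on a neighborhood of $Z(\mathfrak q)$ inside $\Delta$. The identity principle for holomorphic functions on the connected domain $\Delta$ then gives $h \equiv 0$ on $\Delta$, so $u \equiv 0$ on $U$; this places $\mathfrak p \in W$ and contradicts $\mathfrak p \in \partial W$.

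The hard part will be invoking hypocomplexity at the distributional level, since the definition in the excerpt is phrased for smooth germs only while the statement of the UCP involves $u \in \mathcal D'$. For smooth $u$ the argument above runs through without issue; in the distributional case one needs the (standard but not entirely formal) representation theorem asserting that distribution solutions in a hypocomplex structure also factor as $h \circ Z$ with $h$ holomorphic, which follows from the Baouendi--Treves approximation formula together with an analysis of the resulting limits. Once this input is in place, the rest of the proof is just the combination of the injectivity of $\lambda$ with the classical identity principle for holomorphic functions on $\C^m$.
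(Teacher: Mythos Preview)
Your argument is correct and is the standard route to this fact. The paper itself does not spell out a proof: it simply asserts that the proposition ``follows directly from the definitions,'' with no further details. So there is nothing in the paper to compare your approach against beyond that one line.

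A couple of remarks. First, your reduction to the scalar case via $u=\sum_{|J|=p} u_J\,dZ^J$ is fine, since the $dZ^J$ locally span $\Lambda^{p,0}$ and are $\dext'$-closed, so $\dext' u=0$ forces each $\dext' u_J=0$. Second, you are right to flag the mismatch between the smooth formulation of $\mathcal S(\mathfrak p)$ used in the paper's definition of hypocomplexity and the distributional formulation of the unique continuation property in Definition~\ref{dfn:contun}. The standard fix, as you note, is that for hypocomplex structures every distribution solution is locally of the form $h\circ Z$ with $h$ holomorphic (this is in Treves' book and follows from the Baouendi--Treves approximation theorem); the paper is evidently taking this for granted. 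Once that is in hand, your clopen argument combined with the injectivity of $\lambda$ and the identity theorem for holomorphic functions on a connected domain closes the proof cleanly.
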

    
    A particular but important class of involutive structures is that of the elliptic structures. Recall that the \emph{characteristic set of} $\VV$ is the set $T^0 = \VV^\perp \cap T^*\Omega$ and
    that the structure is \emph{elliptic} if $T^0=0$. Every elliptic structure is hypocomplex \cite[Proposition III.5.1]{treves1992hypo} and so
    satisfies the unique continuation property.
    
    Another important class of hypocomplex structures is the one described in the following. Assume that $\Omega$ is endowed with a locally integrable structure $\VV$. Given $(\pp,\xi)\in T^0$, $\xi\neq 0$, the \emph{Levi form} at  $(\pp,\xi) \in T^0$ is the Hermitian form on $\mathcal V_x$ defined by $\mc L_{(\pp,\xi)}(v,w) = (1/2i)\xi([L,\overline{M}]_\pp), ~v,w \in \mathcal V_\pp,$
    in which $L$ and $M$ are any smooth local sections of $\mathcal V$ in a neighborhood of $\pp$ such that $L_\pp = v$ and $M_\pp = w$. We shall say that $\mc L$ is \emph{nondegenerate} if given any point $(\pp,\xi) \in T^0$, with $ \xi \neq 0$, the Hermitian form $\mc L_{(\pp,\xi)}$ is nondegenerate.
    
    The following result is due to Baouendi-Chang-Treves \cite{baouendi1983microlocal}:

    \begin{thm}\label{thm:BCT}
        Let $\Omega$ be endowed with a locally integrable structure for which the Levi form at each $(\pp,\xi)\in T^0$, $\xi\neq 0$, has one positive and one negative eigenvalue. Then $\VV$ is hypocomplex.
    \end{thm}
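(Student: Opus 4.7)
The plan is to verify hypocomplexity at an arbitrary point $\pp \in \Omega$, i.e.\ to show that every germ $f \in \mathcal S(\pp)$ factors as $f = h \circ Z$ for some holomorphic germ $h$ at $0 \in \C^m$. I would choose a system of first integrals $Z = (Z_1, \ldots, Z_m)$ with $Z(\pp) = 0$ and work in normal local coordinates $(x,t) \in \R^m \times \R^n$ in which $Z_j(x,t) = x_j + i\phi_j(x,t)$ with $\phi_j(0) = 0$ and $d_x \phi_j(0) = 0$; in these coordinates, the Levi form at a characteristic covector $(0,\xi)$ is, up to sign, the Hessian of $\xi \cdot \phi$ in $t$ restricted to the fiber of $\VV$.

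The starting point is the Baouendi-Treves approximation formula: on a small neighborhood of $\pp$ every $f \in \mathcal S(\pp)$ is the uniform limit of the functions
\[
E_\tau f(x,t) = (\tau/\pi)^{m/2} \int e^{-\tau (Z(x,t) - Z(y,0))^2} f(y,0) \det Z_y(y,0) \, dy,
\]
each of which is entire in $Z$. Consequently, if the holomorphic functions $h_\tau(\zeta) = (\tau/\pi)^{m/2} \int e^{-\tau (\zeta - Z(y,0))^2} f(y,0) \det Z_y(y,0) \, dy$ on $\C^m$ can be shown to converge uniformly on some fixed polydisc $|\zeta| < r$, then their limit $h$ is holomorphic, and from $E_\tau f = h_\tau \circ Z$ one obtains $f = h \circ Z$.

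The decisive content is to prove this convergence on a full polydisc in $\C^m$. By the standard microlocal reformulation of hypocomplexity, this is equivalent to showing that the analytic wave-front set of every solution $f$ is disjoint from $T^0 \setminus 0$ at $\pp$. Here the Levi-form hypothesis enters: at each $(\pp,\xi) \in T^0 \setminus 0$ the existence of one positive and one negative eigenvalue lets one perform a contour deformation of the FBI-type integral defining $h_\tau$, using the negative direction to gain exponential decay on one wedge with edge through $Z(\pp)$ and the positive direction to gain decay on the opposite wedge. One obtains holomorphic extensions of $f$ (viewed through $Z$) into two opposite wedges; the classical edge-of-the-wedge theorem then supplies a holomorphic extension to a full neighborhood of $0 \in \C^m$, giving hypocomplexity.

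The main obstacle is precisely this microlocal step: turning the mixed-signature hypothesis into two genuine one-sided holomorphic wedge extensions. The deformation of the FBI contour must be carried out uniformly as $\xi$ varies over the compact set of unit characteristic covectors at $\pp$; the opening angles and sizes of the two resulting wedges must be controlled from below so that edge-of-the-wedge applies; and one needs a careful steepest-descent analysis of the quadratic phase $(\zeta - Z(y,0))^2$ near its complex critical points. It is this uniform contour deformation that constitutes the technical heart of the Baouendi-Chang-Treves argument.
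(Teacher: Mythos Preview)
The paper does not prove this theorem at all: it is quoted as a result of Baouendi--Chang--Treves and simply cited to \cite{baouendi1983microlocal}, with no argument given. So there is no ``paper's own proof'' to compare against.

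Your outline is a faithful sketch of the Baouendi--Chang--Treves strategy: local first integrals in normal form, Baouendi--Treves approximation to reduce hypocomplexity to convergence of the approximating entire functions on a full polydisc, and then the microlocal step in which the sign hypothesis on the Levi form is used to show that every solution is microlocally analytic at each nonzero characteristic covector, yielding holomorphic wedge extensions on both sides and hence, via edge-of-the-wedge, a full holomorphic extension. You also correctly identify where the real work lies (the uniform FBI/contour-deformation estimate). As a proof \emph{proposal} this is accurate; as a proof it is of course only a roadmap, since the hard analytic estimates are precisely what the cited paper supplies.
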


    We assume now that $\Omega$ is a compact, real-analytic, orientable and connected manifold. In such a situation $\mathcal C^\omega(\Omega)$ can be endowed with a natural  topology of ({\bf DFS}) space. This topology $\mathfrak{T}$ can be described as follows: if we embed $\Omega\subset {\cal N}$ as a maximally real submanifold of a Stein manifold ${\cal N}$ then $({\cal C}^\omega(\Omega),\mathfrak{T})$ is the inductive limit of a sequence $\CO_\infty({\cal U}_\nu)$, where ${\cal U}_\nu \searrow \Omega$ is a fundamental system of neighborhoods of
    $\Omega$ in ${\cal N}$ and the symbol $\infty$ stands for bounded holomorphic functions. 
    
    If we denote by $\BB$ the sheaf of hyperfunctions on $\Omega$ the space $\BB(\Omega)$ can be identified with the topological dual of $(\mathcal C^\omega(\Omega),\mathfrak{T})$
    or else with space of all analytic functionals on ${\cal N}$ carried by $\Omega$. 
    
    The reason why we introduced the concept of hypocomplexity is because the following result of \cite{cordaro2019top} is necessary for one of our main results in this work.

    \begin{thm}\label{thm:w}
        Let $\Omega$ be a compact, real-analytic manifold endowed with a real-analytic hypocomplex structure $\mathcal V$. Then $\dext' : \BB (\Omega; \Lambda^{p,n-1}) \to  \BB (\Omega; \Lambda^{p,n})$ has closed image of finite codimension.
    \end{thm}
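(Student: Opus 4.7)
The plan is to proceed by duality. The spaces $\BB(\Omega;\Lambda^{p,q})$ and $\mathcal C^\omega(\Omega;\Lambda^{m-p,n-q})$ form, under the natural integration bracket, a reflexive dual pair between an (\textbf{FS}) and a (\textbf{DFS}) space, and the transpose of $\dext'$ is $\pm \dext'$. In particular, the transpose of
\[
\dext' : \BB(\Omega;\Lambda^{p,n-1}) \lra \BB(\Omega;\Lambda^{p,n})
\]
is, up to sign,
\[
T^\ast := \dext' : \mathcal C^\omega(\Omega;\Lambda^{m-p,0}) \lra \mathcal C^\omega(\Omega;\Lambda^{m-p,1}).
\]
By the closed range theorem in the Fréchet--Schwartz setting, the original operator has closed image of finite codimension if and only if $T^\ast$ has closed range and finite-dimensional kernel. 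The two tasks are therefore: (i) bound $\dim \ker T^\ast <\infty$, and (ii) show that $T^\ast$ has closed image.

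For (i), a representative $u\in \ker T^\ast$ can be written locally in a frame $dZ_I$ of first integrals $Z_1,\dots,Z_m$ as $u=\sum_I f_I\,dZ_I$, and the equation $\dext' u=0$ forces each coefficient to lie in $\mathcal S(\pp)$. Hypocomplexity at each point gives $f_I = h_I\circ Z$ for some germ $h_I\in \mathcal O^m_0$, and Proposition \ref{prp:hypoxstar} ensures that on each connected component of $\Omega$ the solution $f_I$ is determined by its germ at a single point. Combined with the (\textbf{DFS}) realization $\mathcal C^\omega(\Omega) = \varinjlim_\nu \CO_\infty(\mathcal U_\nu)$ coming from the Stein embedding $\Omega\hookrightarrow \mathcal N$, this identifies the real-analytic solutions with restrictions of a fixed family of bounded holomorphic functions on a common Stein neighborhood of $Z(\Omega)$. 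A normal families / Montel argument, together with the unique continuation property, then yields finite-dimensionality of $\ker T^\ast$.

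For (ii), my plan is to use the same inductive-limit description to realize $\dext'$ on $\mathcal C^\omega(\Omega;\Lambda^{m-p,\cdot})$ as the restriction of a $\bar\partial$-type complex on the Stein neighborhoods $\mathcal U_\nu$; after a mild thickening, Cartan's theorem B and Hörmander-type $L^2$ estimates produce closed-range statements for $\bar\partial$ at each level $\nu$, and a Mittag-Leffler argument transfers closedness to the (\textbf{DFS}) inductive limit equipped with the topology $\mathfrak T$. The main obstacle I expect is precisely this last step: one has to guarantee that hypocomplexity and the associated estimates persist uniformly in $\nu$ and that the restriction of the ambient $\bar\partial$ is compatible with the intrinsic $\dext'$ at the level of the inductive system. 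This delicate Stein-neighborhood analysis is the technical heart of \cite{cordaro2019top}, and I would invoke those estimates rather than redevelop them here.
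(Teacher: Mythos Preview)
The paper does not prove Theorem~\ref{thm:w}; it is quoted verbatim as a result of \cite{cordaro2019top} and used as a black box (see the sentence immediately preceding the statement: ``the following result of \cite{cordaro2019top} is necessary for one of our main results''). There is therefore no in-paper proof to compare your proposal against.

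Your sketch is a reasonable outline of the standard duality strategy, and you yourself recognise that the substantive step~(ii) --- closed range of $\dext'$ on $\mathcal C^\omega(\Omega;\Lambda^{m-p,\bullet})$ via uniform Stein-neighbourhood estimates --- is exactly what \cite{cordaro2019top} supplies, and you invoke it rather than reprove it. In that sense your treatment and the paper's coincide: both defer the content of Theorem~\ref{thm:w} to \cite{cordaro2019top}. One caution on your step~(i): the Montel/normal-families argument for $\dim\ker T^\ast<\infty$ is morally correct, but as written it leans on identifying real-analytic $\dext'$-closed sections with restrictions of holomorphic objects on a \emph{fixed} common Stein neighbourhood; making that uniform over the (\textbf{DFS}) inductive system already requires the same hypocomplexity-driven analysis from \cite{cordaro2019top}, so (i) is not really independent of (ii).
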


% %%%%%%%%%%%%%%%%%%%%%%%%%%%%%%%%%%%%%%%%%
% %%% HOMOGENEOUS SPACES AND LIE GROUPS %%%
% %%%%%%%%%%%%%%%%%%%%%%%%%%%%%%%%%%%%%%%%%

\section{Involutive structures on compact Lie groups}
\label{sec:involutive_structures_on_compact_lie_groups}

    %%%%%%%%%%%%%%%%%%%%%
    %%% PRELIMINARIES %%%
    %%%%%%%%%%%%%%%%%%%%%
    
    In this section, we restrict our attention to involutive structures on compact Lie groups. We introduce the concept of left-invariant involutive structure, which is an involutive structure that encodes some algebraic properties of the Lie group and we show that there are many left-invariant structures with interesting analytical properties.
    
    We want to show that the study of the cohomology of some of these left-invariant involutive structures can be done using only the Lie algebras. In order to explain exactly how to do that, we need to introduce the concepts of left-invariant cohomology of Lie algebras and left-invaraint cohomology of Lie algebras induced by subalgebras.
    
    Then we explain how to use some of the Chevalley and Eilenberg's techniques to study the de Rham cohomology. These techniques can be partially adapted to other left-invariant involutive structures and with them we show that it is possible to include the left-invariant cohomology relative to a subalgebra into the usual cohomology. This inclusion shows that there are algebraic obstructions to solvability.
    
    Finally, we use Hodge decomposition and its relation to Lie derivatives to obtain some new results regarding left-invariant cohomologies, namely Theorem \ref{int:comp_lie:hypo_subelliptic} and Theorem \ref{int:comp_lie:hypo_subelliptic_torus_2}. Then, we show one application of Serre duality, namely Theorem \ref{int:left_invariance_on_degree}, and we conclude with the proof of Theorem \ref{int:awesome}, which is the main theorem in this section.

\subsection{Left-invariant involutive structures}
\label{sec:basicdef}

    Let $G$ be a Lie group with a Lie algebra $\frk g_\R$, that is, the set of all left-invariant vector fields of $G$. We denote by $L_x : G \to G$ the left-multiplication, that is, $L_x(g) = xg$ for $g \in G$.

    \begin{dfn}
        A vector bundle $\mathcal V \subset \C T G$ is called \emph{left-invariant} if $(L_x)_*X_g \in \mathcal V_{xg}$ for all $X_g \in \mathcal V_g$.
    \end{dfn}

    By using Lie subalgebras $\frk h \subset \frk g \doteq \frk g_\R \otimes \C$ it is easy to construct \emph{involutive} left-invariant subbundles of $\C T G$. In fact, we just take $\frk h_x = \{X(x) \in \C T_x G: X \in \frk h \}$ and define $\mathcal V_{\frk h} \doteq \bigcup_{x \in G} \frk h_x$. This vector bundle is clearly left-invariant. Clearly, each left-invariant involutive structure $\mathcal V \subset \C T G$, by evaluation at the identity, defines a complex Lie subalgebra $\frk h \subset \frk g$. 
    In other words, on a Lie group $G$, any complex Lie subalgebra $\frk h \subset \frk g$ is in a one-to-one correspondence with left-invariant involutive structures on $G$. We usually denote the vector bundle $\mathcal V_{\frk h}$ by its corresponding Lie algebra $\frk h$. Notice that, since every Lie group is an analytic manifold, every involutive structure over $G$ is a \emph{locally integrable structure} \citep{berhanu2008introduction, treves1992hypo}.

    We are interested in studying properties of the space $H^{p,q}_{\mathcal C^\infty}(G; \frk h)$. From what we know by the local theory, the properties of this space depend on how the Lie algebra $\frk h$ is included on the algebra $\frk g$. However, since we are dealing with global properties on compact Lie groups, we also have to take into account some intrinsic algebraic properties of $\frk h$ and topological aspects of $G$.

    We use the language of Lie algebras to highlight some special locally integrable structures over $G$:

    \begin{dfn}
    \label{dfn:basic_types_of_lie_algebras}
    We shall say that a Lie subalgebra $\frk h \subset \frk g$ defines:
        \begin{itemize}
            \item an \emph{elliptic structure} if $\frk h + \bar {\frk h} = \frk g$. The subalgebra $\frk h$ is called an \emph{elliptic subalgebra};
            \item a \emph{complex structure} if $\frk h \oplus \bar {\frk h} = \frk g$. The subalgebra $\frk h$ is called a \emph{complex subalgebra};
            \item a \emph{Cauchy-Riemann (CR) structure} if $\frk h \cap \bar {\frk h} = 0$. The subalgebra $\frk h$ is called a \emph{CR subalgebra};
            \item an \emph{essentially real structure} if $\frk h = \bar {\frk h}$. The subalgebra $\frk h$ is called a \emph{essentially real subalgebra}.
        \end{itemize}
    \end{dfn}

    For example, when $\frk h = \frk g$, clearly the complex \eqref{h_complex} is the de Rham complex. In this case, notice that we have an elliptic structure that is also an essentially real structure. When $\frk h \oplus \overline{\frk h} = \frk g$, we have a complex structure that is also an elliptic structure over $G$. In this case, the operator $\dext'$ is the $\bar \del$ operator and the associated complex \eqref{h_complex} is the Dolbeault complex. The following example shows how the cohomology and analytical properties of the associated differential complex can change even with small changes in an essentially real Abelian Lie algebra.
    
    %%%%%%%%%%%%%%%%%%%%%%%%%%%%%%%%%%%
    %%% TORUS AND LIOUVILLE NUMBERS %%%
    %%%%%%%%%%%%%%%%%%%%%%%%%%%%%%%%%%%

    \begin{exa}
        \label{exa:torus_liouvile} Consider the two torus $\mathbb{T}^2$ with coordinates $(x,y)$. Its Lie algebra is given by $\frk t = \operatorname{span}_\C \{ \del/\del x,  \del/\del y\} $. Let $\mu$ be a real number, take $L = \del/\del x - \mu \del/\del y$, and define $\frk h = \operatorname{span}_\C \{ L \}$.
        
        It is easy to see that $\mathcal C^\infty(\mathbb{T}^2, \Lambda^{0,1}) \cong \{ f \dext x : f \in \mathcal C^\infty(\mathbb{T}^2)  \} $
        and
        $ \mathcal C^\infty(\mathbb{T}^2, \Lambda^{0,2}) \cong \{0\}.$
        
        By writing $M = \del / \del y$, we have that $L$ and $M$ form a basis for $\frk t$ and the respective dual basis consists of $\zeta = \dext x$ and $\tau = \mu \dext x + \dext y$. Therefore, the operator $\dext$ can be written as
        $\dext u = M(u) \zeta + L(u) \dext x$ and so $\dext'u = (Lu)\dext x$. Thus, we have the following complex
        $$\mathcal  C^\infty(\mathbb{T}^2) \xrightarrow[]{ \dext'} \{ f \dext x : f \in \mathcal C^\infty(\mathbb{T}^2)  \} \xrightarrow[]{ \dext'} \{0\}.$$

        Let $v \in \mathcal C^\infty(\mathbb{T}^2, \Lambda^{0,1})$ and write $v = f \dext x$, in which $f \in \mathcal C^\infty(\mathbb{T}^2)$. Notice that, if we want to show that $v$ is $\dext'$-exact, we have to find a function $u$ satisfying $\dext' u = v$. This is equivalent to solve the partial differential equation $Lu = f$. It is well known that this equation is globally solvable in $\mathcal C^\infty(\mathbb{T}^2)$ if and only if $\mu$
        is not a Liouville number \citep{greenfield1972global} and $\int_{\mathbb{T}^2} f(x,y) \dint x \dint y = 0$.
        
        Thus, if $\mu$ is a non-Liouville number, the dimension of $H^{0,1}(\mathbb{T}^2; \frk h)$ is one. It is also possible to show that, when $\mu$ is rational, the dimension of $H^{0,1}(\mathbb{T}^2; \frk h)$ is infinity and $\dext'$ has closed range. We prove this claim on Lemma \ref{lem:torus_liouville}. When $\mu$ is a Liouville number, we have that $\dext' (C^\infty(\mathbb{T}^2))$ is not closed and, hence the dimension
        $H^{0,1}(\mathbb{T}^2; \frk h)$ also is infinity. We prove this fact on Lemma \ref{lem:torus_liouville_2}.
    \end{exa}

    \begin{lem}
        \label{lem:torus_liouville} If $\frk h$ is defined as in Example \ref{exa:torus_liouvile} and $\mu$ is a rational number, then the dimension of $H^{0,1}(\mathbb{T}^2; \frk h)$ is infinity and the range of the operator $$\dext' : \mathcal  C^\infty(\mathbb{T}^2) \to \{ f \dext x : f \in \mathcal C^\infty(\mathbb{T}^2) \}$$ is closed.
    \end{lem}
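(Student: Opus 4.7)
The plan is to analyze the operator $L = \partial/\partial x - \mu\,\partial/\partial y$ via Fourier series on $\mathbb{T}^2$. Write $\mu = p/q$ in lowest terms with $q > 0$, and expand $u \in \mathcal C^\infty(\mathbb{T}^2)$ as $u = \sum_{(k,\ell) \in \Z^2} \hat u(k,\ell) e^{i(kx+\ell y)}$. Then $L$ acts diagonally, with eigenvalue $i(k - \mu\ell)$ on the $(k,\ell)$-th Fourier mode. Setting $\Lambda = \{(k,\ell) \in \Z^2 : qk = p\ell\}$, which is a rank-one sublattice of $\Z^2$ and therefore infinite, the equation $Lu = f$ translates to $\hat u(k,\ell) = \hat f(k,\ell)/[i(k-\mu\ell)]$ for $(k,\ell) \notin \Lambda$, subject to the necessary compatibility conditions $\hat f(k,\ell) = 0$ for all $(k,\ell) \in \Lambda$.

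Next I would show that these compatibility conditions are also sufficient. Let $V = \{f \in \mathcal C^\infty(\mathbb{T}^2) : \hat f|_\Lambda = 0\}$. For $f \in V$, define
$$u = \sum_{(k,\ell) \notin \Lambda} \frac{\hat f(k,\ell)}{i(k - \mu\ell)}\, e^{i(kx + \ell y)}.$$
The crucial ingredient is the uniform Diophantine bound $|qk - p\ell| \geq 1$ for $(k,\ell) \notin \Lambda$, which yields $|k - \mu\ell| \geq 1/q$. Hence division by $i(k-\mu\ell)$ loses no derivatives, so $\hat u$ is rapidly decreasing whenever $\hat f$ is, and $u$ is a smooth solution of $Lu = f$. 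This identifies the range of $\dext'$ with $V \cdot \dext x$.

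Closedness is then immediate, since $V$ is the intersection of the kernels of the continuous linear functionals $f \mapsto \hat f(k,\ell)$ for $(k,\ell) \in \Lambda$. For the infinite-dimensionality of the cokernel, the quotient $\mathcal C^\infty(\mathbb{T}^2) / V$ is identified via $f \mapsto \hat f|_\Lambda$ with the Schwartz space of rapidly decreasing sequences on $\Lambda \cong \Z$, which is clearly infinite-dimensional. The only genuine technical point is the small-divisor control, which in the rational case is trivial thanks to the uniform bound $|qk - p\ell| \geq 1$; this is precisely the feature that sharply distinguishes the rational setting from the Liouville situation treated in the following lemma.
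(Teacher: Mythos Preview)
Your proposal is correct and follows essentially the same approach as the paper: both arguments use Fourier series on $\mathbb{T}^2$, identify the infinite lattice $\Lambda=\{(k,\ell):k-\mu\ell=0\}$, and exploit the uniform lower bound $|k-\mu\ell|\geq 1/q$ away from $\Lambda$ to solve $Lu=f$ without loss of derivatives. The only organizational difference is that you first characterize the range exactly as $V\cdot\dext x$ and then read off both closedness and infinite codimension, whereas the paper treats the two assertions separately (exhibiting linearly independent cohomology classes $e^{i(\xi_j x+\eta_j y)}\dext x$ directly, and checking closedness by a sequential argument); the underlying ideas are the same.
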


    \begin{proof}
        To show that the dimension of $H^{0,1}(\mathbb{T}^2; \frk h)$ is infinity, we only have to show that there exists an arbitrarily large linearly independent set contained in $H^{0,1}(\mathbb{T}^2; \frk h)$. Let $k > 0$ be an integer. Since $\mu$ is rational, the equation $\xi - \mu \eta = 0$ has infinitely many.
        
        Thus, let $(\xi_j, \eta_j) \in \Z \times \Z$, $j=0, \ldots, k$ be a sequence of distinct zeros of the equation $\xi - \mu \eta = 0$. We are going to show that the set $\{e^{i (\xi_j x + \eta_j y)} \dext x \}_{j=0}^k$ is linearly independent. Let $c_j \in \C$ be constants. To show that this
        set is linearly independent, it is enough to show that $\sum_{j=0}^k c_j e^{i (\xi_j x + \eta_j y)}\dext x$ is $\dext'$-exact only when every constant $c_j$ is zero. By using Fourier series, we can easily see that a solution to the equation $Lf = \sum_{j=0}^k c_j e^{i (\xi_j x + \eta y_j)}$ exists if, and only if, every constant $c_j$ is zero.
        
        To prove that the range of the operator $\dext'$ is closed, it is enough to see that, if $f_j$ is in the range of $\dext'$ and is a sequence converging to $f$ in $\mathcal C^\infty(\mathbb{T}^2)$,
        it holds that $\hat f_j(\xi,\eta) = 0$ whenever $\xi - \mu \eta = 0$. Also, since $f_j \to f$ in $\mathcal C^\infty(\mathbb{T}^2)$, we have that
        $\hat f_j(\xi, \eta) \to \hat f(\xi, \eta) $ for all $(\xi, \eta)$ and, thus, $\hat f(\xi, \eta) = 0 $ whenever $\xi - \mu \eta = 0$. This implies that $f$ is in the range of $\dext'$.
        In fact, if we take
        $$u = \sum_{(\xi,\eta) \in \Z^2,~ \xi - \mu \eta \neq 0 } \frac{ \hat f(\xi, \eta)}{ (\xi - \mu \eta) } e^{i (\xi_j x + \eta_j y)},$$ we have that $u$ is smooth, since $\mu$ is rational, and it holds that $Lu = f$.
    \end{proof}

    \begin{lem}
      \label{lem:torus_liouville_2}
      If $\frk h$ is defined as in Example \ref{exa:torus_liouvile} and $\mu$ is a Liouville number, then the range of the operator $$\dext' : \mathcal  C^\infty(\mathbb{T}^2) \xrightarrow[]{ } \{ f \dext x : f \in \mathcal C^\infty(\mathbb{T}^2) \}$$ is not closed.
    \end{lem}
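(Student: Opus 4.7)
The strategy is to build, directly from the Liouville property of $\mu$, a smooth $(0,1)$-form $f$ that lies in the $\mathcal{C}^\infty$-closure of $\dext'\mathcal{C}^\infty(\mathbb{T}^2)$ but not in $\dext'\mathcal{C}^\infty(\mathbb{T}^2)$ itself. The construction is tailored so that its Fourier series is supported on frequencies where the symbol $i(\xi - \mu\eta)$ of $L = \partial/\partial x - \mu \partial/\partial y$ is extremely small, and where the Fourier coefficients are precisely calibrated so that the formal solution fails to decay.

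The plan is as follows. Since $\mu$ is Liouville, for every $n\ge 1$ we can choose integers $(\xi_n, \eta_n)$ with $|\eta_n|\ge n$ and
\[ 0 < |\xi_n - \mu\eta_n| < |\eta_n|^{-n}. \]
Set $c_n = \xi_n - \mu\eta_n$ and define
\[ f \;=\; \sum_{n=1}^{\infty} c_n\, e^{i(\xi_n x + \eta_n y)}\,\dext x. \]
Since $|\xi_n|\le (|\mu|+1)|\eta_n|$ for $n$ large, the bound $|c_n|<|\eta_n|^{-n}$ forces the coefficients to decay faster than any polynomial in $\max(|\xi_n|,|\eta_n|)$, so the series converges in $\mathcal{C}^\infty(\mathbb{T}^2;\Lambda^{0,1})$ and defines a smooth form $f$.

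Next I would check that $f\in\overline{\dext'\mathcal{C}^\infty(\mathbb{T}^2)}$. Each pure-frequency form $e^{i(\xi_n x + \eta_n y)}\dext x$ is $\dext'$-exact, with primitive $[i(\xi_n - \mu\eta_n)]^{-1}e^{i(\xi_n x+\eta_n y)}$. Hence each partial sum
\[ f_N \;=\; \sum_{n=1}^{N} c_n\, e^{i(\xi_n x + \eta_n y)}\,\dext x \;=\; \dext'\!\left(\sum_{n=1}^{N}\frac{1}{i}\,e^{i(\xi_n x+\eta_n y)}\right) \]
lies in the range of $\dext'$, and $f_N\to f$ in $\mathcal{C}^\infty$ by the same rapid-decay argument as above.

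Finally, I would show $f\notin\dext'\mathcal{C}^\infty(\mathbb{T}^2)$. If $Lu = f/\dext x$ for some $u\in\mathcal{C}^\infty(\mathbb{T}^2)$, taking Fourier coefficients gives
\[ i(\xi-\mu\eta)\,\widehat{u}(\xi,\eta) \;=\; \widehat{f/\dext x}(\xi,\eta) \quad \text{for all } (\xi,\eta)\in\Z^2. \]
Evaluating at $(\xi_n,\eta_n)$ yields $\widehat{u}(\xi_n,\eta_n) = c_n/[i(\xi_n-\mu\eta_n)] = -i$ for every $n$, which contradicts $\widehat{u}(\xi,\eta)\to 0$ as $(\xi,\eta)\to\infty$ (a property enjoyed by any smooth, hence $L^2$, function on $\mathbb{T}^2$). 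There is no serious obstacle in the argument; the only point that requires a little care is verifying that the polynomial-decay estimate on $|c_n|$ in terms of $|\eta_n|$ translates into genuine $\mathcal{C}^\infty$-convergence, which is handled by the bound $|\xi_n|\le(|\mu|+1)|\eta_n|$.
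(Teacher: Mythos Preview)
Your proof is correct and follows essentially the same route as the paper: exploit the Liouville condition to pick frequencies $(\xi_n,\eta_n)$ along which the symbol $\xi-\mu\eta$ is extremely small, build a smooth form supported on those frequencies whose formal primitive has non-decaying Fourier coefficients, and conclude that this form lies in the closure of the range but not in the range itself. The only cosmetic difference is that the paper first writes down the non-smooth distributional primitive $f=\sum e^{i(\xi_j x+\eta_j y)}$ and then observes that $Lf$ is smooth, whereas you construct the smooth target $\sum c_n e^{i(\xi_n x+\eta_n y)}$ directly; the two objects coincide up to a harmless factor.
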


    \begin{proof}
        We again adapt some ideas from \cite{greenfield1972global}. If $\mu$ is a Liouville number, then there exists a sequence $\{(\xi_j, \eta_j)\}_{j \in \N}$ such that $\{\xi_j\}$ and $\{\eta_j\}$ are increasing and
        $|\xi_j - \mu \eta_j| \leq 1/(\xi_j^2 + \eta_j^2)^j$. Let $L$ be as in Example \ref{exa:torus_liouvile}. By defining $f_k = \sum_{j \leq k} \exp(i\xi_jx + i\eta_jy)$, we have that $f_k, Lf_k \in \mathcal C^\infty(\mathbb{T}^2)$ and
        $f = \lim_k f_k \in \mathcal D'(\mathbb{T}^2) \bs \mathcal C^\infty(\mathbb{T}^2)$, but with $Lf \in \mathcal C^\infty(\mathbb{T}^2).$
        Notice that any other solution of $u$ of $Lu = Lf$ is of the form $u = f + c$, with $c \in \C$. Therefore, $Lf$ is not in the range of $\dext' : \mathcal  C^\infty(\mathbb{T}^2) \xrightarrow[]{ } \{ f \dext x : f \in \mathcal C^\infty(\mathbb{T}^2) \}$.
    \end{proof}

%%%%%%%%%%%%%%%%%%%%%%%%%%%%%%%%%%%%%%%%%%%%%%%%%%%%%%%%%
%%% EXISTENCE OF LEFT-INVARIANT INVOLUTIVE STRUCTURES %%%
%%%%%%%%%%%%%%%%%%%%%%%%%%%%%%%%%%%%%%%%%%%%%%%%%%%%%%%%%

\subsubsection{Construction of left-invariant involutive structures}
\label{subsec:existence}

    In Definition \ref{dfn:basic_types_of_lie_algebras}, we introduced some types of Lie algebras by defining certain special involutive structures. Now we construct some examples of such Lie algebras.
    
    Let $G$ be a compact connected Lie group and consider any maximal torus $\mathbb{T} \subset G$. Let $\frk t$ be the complexification of the Lie algebra of $\mathbb{T}$. The Lie algebra $\frk t$ is Abelian and self-normalizing, that is, if $[X,Y] \in \frk t$ for all $X \in \frk t$, then $Y \in  \frk t$. We endow $\frk g_\R$ with an $ad$-invariant inner product, that is, an inner product satisfying
    $\ip{ [X, Y], Z} = -\ip{ Y, [X, Z]},$
    for all $X, Y, Z \in \frk g_\R$.
    Also, we extend it to an Hermitian inner product on $\frk g$, which satisfies
    $\ip{ [X, Y], Z} = -\ip{ Y, [\overline X, Z]}$
    for all $X, Y, Z \in \frk g.$

    Let $T_1, \ldots, T_r$ be a basis for $\frk t_\R$ and consider the operators $\operatorname{ad}_{T_j} : \frk g \to \frk g$, given by $\operatorname{ad}_{T_j}(X) = [T_j, X]$. These operators have the following properties: $\operatorname{ad}_{T_j}$ and $\operatorname{ad}_{T_k}$ commute for every $j$ and $k$ (Jacobi's identity); $\operatorname{ad}_{T_j}$ is skew-Hermitian (and diagonalizable) for all $j$; all $\operatorname{ad}_{T_j}$ share the same eigenspaces.
    
    If $\frk g'$ is one of the eigenspaces, we denote by $\alpha = (\alpha_1, \ldots, \alpha_r)$ the ordered set of eigenvalues associated to $\operatorname{ad}_{T_1}, \ldots, \operatorname{ad}_{T_r}$, specifically, $\operatorname{ad}_{T_j}(X) = \alpha_j X$ for all $X \in \frk g'$. We denote this eigenspace by $\frk g_\alpha$. Each $\alpha$ is called a root of the Lie algebra. We denote by $\Delta$ the set of all roots. Notice that every eigenvalue is purely imaginary.

    By the spectral theorem, we have a decomposition
    $ \frk g = \frk t \oplus \bigoplus_{\alpha \in \Delta} \frk g_\alpha.$
    
    It is sometimes convenient to deal with $\alpha$ as an element of $\frk t^*$,
    in which $\alpha(T_j)$ is defined by $\operatorname{ad}_{T_j}(X) = \alpha(T_j)X$ for $X \in \frk g_\alpha.$

    Suppose that $X \in \frk g_\alpha$ and $Y \in \frk g_\beta$. By Jacobi's identity we have
    $$
    \begin{aligned}
    	\operatorname{ad}_{T_i}([X,Y]) = -[Y,[T_i,X]] - [X,[Y,T_i]] 
        	= -[Y, \alpha(T_i)X] -[X,-\beta(T_i)Y] 
            = \left\{\alpha(T_i) + \beta(T_i) \right\}[X,Y].
    \end{aligned}
    $$
    
    Thus, we conclude that $[X,Y] \in \frk g_{\alpha + \beta}$ if $\alpha + \beta$ is a root and zero otherwise.
    
    Let $\Delta_+ \subset \Delta$ be a maximal set of roots satisfying the following properties: for all $\alpha \in \Delta$, exactly one of $\alpha$ or $-\alpha$ is in $\Delta_+$; if $\alpha, \beta \in \Delta_+$ and $\alpha + \beta$ is a root, then $\alpha + \beta \in \Delta_+$.

    \begin{rmk}
        The dimension of $\frk t$, usually called the \emph{rank} of $G$, is independent of the choice of the maximal torus and $\dim G - \dim_\C \frk t$ always is an even number.
    \end{rmk}

    Now we can define some left-invariant involutive structures.
    
    Since $\frk t$ is Abelian, it is trivial to construct essentially real, elliptic, complex, or CR subalgebras of $\frk t$.
    In fact, take $\frk t_\R$ the real Lie algebra of $\mathbb{T}$ and let $X_1, \ldots, X_r$ be a basis for $\frk t_\R$. We have that $\frk t = \frk t_\R \otimes \C$. We identify $X_j$ with $X_j \otimes 1$. Let $s,t \geq 0$ and $s + t \leq r$. Consider the subalgebra
    $$ \frk u = \operatorname{span}_\C \{ X_s \ldots, X_{s+t}, X_{s+t+1} + i X_{s+it+2}, \ldots X_{r-1} + iX_{r} \}.$$
    Notice that these are involutive structures on the maximal torus.
    
    If $s > 0$ and $t = 0$, we have an essentially real subalgebra; if $s, t > 0$ and $s + t = r$, we have an elliptic subalgebra of $\frk u$; if $r$ is even and $s = 0$, we have a complex subalgebra of $\frk u$; and, finally, if $s = 0$ and $t > 0$, we have a CR structure of $\frk u$.
    
    Now consider $\frk h = \frk u \oplus \bigoplus_{\alpha \in \Delta_+} \frk g_\alpha $. By the preceding discussion,
    $\frk h$ is a Lie subalgebra and is elliptic if $\frk u$ is elliptic, is complex if $\frk u$ is complex, and is CR if $\frk u$ is CR.
    
    \begin{rmk}
        We can remove the hypothesis of $G$ being compact by taking a Cartan subalgebra $\frk t$ and by proving the statement without using the $\operatorname{ad}$-invariant metric.
    \end{rmk}

    Let $\frk h \subset \bigoplus_{\alpha \in \Delta} \frk g_\alpha$ be any elliptic subalgebra of $\bigoplus_{\alpha \in \Delta} \frk g_\alpha$. If $\frk e \subset \frk t$ is an elliptic subalgebra of $\frk t$, then $\frk e \oplus \frk h$ is an elliptic subalgebra of $\frk g$. The most obvious example of such structure is when $\frk h = \bigoplus_{\alpha \in \Delta_+} \frk g_\alpha$. % \todo[inline]{Is there any other example?}
    
    Now we construct some concrete examples.
    
    \subsubsection{Involutive structures on \texorpdfstring{$\operatorname{SU}(2)$}{\operatorname{SU}(2)} and \texorpdfstring{$\operatorname{SU}(3)$}{\operatorname{SU}(3)}}
    
    The group $\operatorname{SU}(2)$ is defined as
    $$ \operatorname{SU}(2) \doteq \left\{ \left(  \begin{matrix} z_1 & -\overline{z_2} \\ z_2 & \overline{z_1} \end{matrix} \right) : z_1, z_2 \in \C, ~|z_1|^2 + |z_2|^2 = 1 \right\}.$$ The rank of $\operatorname{SU}(2)$ is 1.
    
    The Lie algebra of $\operatorname{SU}(2)$, denoted by $\frk{su}(2)$ is generated by
    $$ X = \left(  \begin{matrix} 0 & i \\ i & 0 \end{matrix} \right), \quad Y = \left(  \begin{matrix} 0 & -1 \\ 1 & 0 \end{matrix} \right), \quad  T = \left(  \begin{matrix} i & 0 \\ 0 & -i \end{matrix} \right).$$
    
    We have the following relation between $X, Y$ and $T$:
    
    \begin{equation}
    \label{sec:su2:basic_commutator_relations}
    [T, X] = 2Y, \quad [T, Y] = -2X, \quad [X, Y] = 2T.
    \end{equation}

    \begin{exa}
        Let $L = X - iY$ and consider the structure $\frk h = \operatorname{span}_\C \{L\}$. This is clearly a CR structure. We have that $\{L, \overline{L},T\}$ is a basis for $\frk{su}(2) \otimes \C$ and we denote its dual basis by $\{\zeta, \overline{\zeta}, \tau\}$. We note that  $\frk h^\perp_0$ is a vector space of dimension 1 and that $\tau \in \frk h^\perp_0$. Thus, if $\xi \in \frk h^\perp_0$, then we can write $\xi = \lambda \tau$, in which $\lambda \in \R \bs \{0\}$. We note that $\frk h$ also has dimension 1 and so $Z = \alpha L$ for all $Z \in \frk h$. Then, the Levi form for the structure $\frk h$ is given by $$ \mathscr L_\xi (Z, Z) = \frac{1}{2i} \lambda \alpha \overline{\alpha} \tau([L, \overline{L}]) = \frac{1}{2i} \lambda \alpha \overline{\alpha} 4i \tau(T) = 2\lambda|\alpha|^2. $$
        In conclusion, the Levi form is non-degenerate and does not change sign for all $Z \in \frk h$.
    \end{exa}

    We will show that $\operatorname{SU}(2)$ does not admit a Levi flat left-invariant structure. In fact, suppose that $\operatorname{SU}(2)$ admits a Levi flat left-invariant CR structure $\frk h$. Since it is Levi flat, we have that the essentially real structure $\frk h + \overline{\frk h}$ is a Lie algebra. Take $\frk k = \frk{su}(2) \cap (\frk h + \overline{\frk h})$ and notice that $\frk k \otimes \C = \frk h + \overline{\frk h}$.
    
    Next we prove that the algebra $\frk k$ is Abelian. Let $X, Y \in \frk k$ be linearly independent. We have that $\ip{[X,Y],X} = \ip{Y,[X,X]} = 0$ and that $\ip{[X,Y],Y} = -\ip{X,[Y,Y]} = 0$. Therefore, $[X,Y]$ is orthogonal to $\frk k$. Since $\frk k$ is a Lie algebra, we also have that $[X,Y] \in \frk k$ and so the only possibility is that $[X,Y] = 0$. Therefore, there is an Abelian subalgebra of rank 2. This is a contradiction because the rank of
    $\operatorname{SU}(2)$ is 1.
    
    This CR structure is related to the natural complex structure of $\C^2$. There is a natural diffeomorphism between the group $\operatorname{SU}(2)$ and the 3-sphere $\mathbb{S}^3$, which has a natural CR structure inherited from $\C^2$. This structure, when pulled back to $\operatorname{SU}(2)$, is exactly the left-invariant structure we just described. Now we prove this claim:

    The complex structure of $\C^2$ is given by the involutive bundle $\mathcal V = \bigcup_{p\in\C^2} \mathcal V_p$, in which $$\mathcal V_p = \operatorname{span} \left\{ \left. \frac{\del}{\del \overline{z}_j} \right|_p : j = 1, 2 \right\} \text{ and }
     \frac{ \del }{ \del \overline{z}_j } = \frac{1}{2} \left( \frac{ \del }{ \del x_j } + i \frac{ \del }{ \del y_j }\right), \qquad j = 1,2.$$
    
    By writing $\rho(z_1, z_2) = |z_1|^2 + |z_2|^2  - 1$, we have that
    $ \mathbb{S}^3 = \rho^{-1}(0)$. Clearly, $\dext \rho_p \neq 0$ if $p \in \bb S^3 $, so we have
    $ \C T_p \bb S^3 = \{ Z \in \C T_p \C^2 : Z_p (\rho) = 0 \}.$
    
    Notice that $\mathcal W_p = \mathcal V_p \cap \C T_p \bb S^3$ defines a CR structure on $\mathbb S^3$. We want a description of $\mathcal W_p$ to simplify some computations. We note that $ \mathcal W_p = \{ Z \in \mathcal V_p : Z_p (\rho) = 0 \}.$ Notice that if $Z \in \mathcal V_p$, by taking $p_1, p_2 \in \C$ with $p = (p_1, p_2)$, then $$Z = \alpha_1 \left. \frac{\del}{\del \overline{z}_1} \right|_p + \alpha_2 \left. \frac{\del}{\del \overline{z}_2} \right|_p$$ and so $(Z \rho)(p) = 2(a_1p_1 + a_2p_2) = 0.$
    Now we have
    $$ \mathcal W_p = \left\{ \alpha_1 \left. \frac{\del}{\del \overline{z}_1} \right|_p + \alpha_2 \left. \frac{\del}{\del \overline{z}_2} \right|_p
     : \alpha_1, \alpha_2 \in \C;~ a_1p_1 + a_2p_2 = 0 \right\}.$$

    The identification of $\mathbb S ^3$ with $\operatorname{SU}(2)$ is given by the diffeomorphism
    $$  (z_1, z_2) \in \mathbb S ^3 \xmapsto{\theta} \left(  \begin{matrix} z_1 & -\overline{z_2} \\ z_2 & \overline{z_1} \end{matrix} \right) \in \operatorname{SU}(2).$$
    
    This map endows $\mathbb S ^3$ with a structure of Lie group with multiplication given by
    $$ (\alpha_1,\alpha_2) \cdot (z_1,z_2) \doteq \theta^{-1}(\theta(\alpha_1,\alpha_2) \cdot \theta(z_1,z_2)) = (\alpha_1 z_1 - \overline{\alpha_2} z_2, \alpha_2 z_1 + \overline{\alpha_1} z_2).$$
    
    Now, for $\alpha \in \mathbb S^3$, we have the diffeomorphism $ L_\alpha : \mathbb S^3 \to \mathbb S ^3 $ defined by $L_\alpha(z_1, z_2) = (\alpha_1, \alpha_2) \cdot (z_1, z_2)$. Notice that this map has a natural extension to $\C^2$ and each component is a holomorphic function. More than that, for $\alpha \neq 0$, this map is actually a biholomorphism from $\C^2$ to $\C^2$ and so it preserves the complex structure of $\C^2$. Therefore, it clearly restricts to a CR diffeomorphism from
    $\mathbb S^3$ to $\mathbb S ^3$. What we proved is that the involutive structure $\mathcal W$ is invariant by the action of $L_\alpha$, or, in other words, $\mathcal W$ is left-invariant.
    
    Now we have almost everything we need to relate the CR structure we introduced on $\mathbb{S}^3$ with the abstract CR structure we constructed on $\operatorname{SU}(2)$. To simplify even more the exposition, let $p \in \C^2$ and identify $\C T_p \C^2$ with $\C^2$ by
    \begin{equation}
        \label{eq:adele}
        \left. \frac{ \del }{ \del x_1 } \right|_p \mapsto (1,0), \left. \frac{ \del }{ \del y_1 } \right|_p \mapsto (i,0), \left. \frac{ \del }{ \del x_2 } \right|_p \mapsto (0,1) \text{ and } \left. \frac{ \del }{ \del y_2 } \right|_p \mapsto (0,i).
    \end{equation}

    With this identification, we have that $\mathcal W_p$ is identified with $$\{(z_1, z_2) \in \C^2 : z_1p_1 + z_2p_2 = 0 \}.$$ Notice that $ \theta $ is linear and so its differential can be identified with $\theta$ itself. Also, notice that, on $p = (1,0)$, we have that $Z \in \mathcal W_{(1,0)}$ is of the form $Z = \Lambda \del / \del \overline{z_2}$, in which $\Lambda \in \C$. Therefore, all we need to compute is $\theta_*(\del / \del \overline{z_2})$.
    By using the identification \eqref{eq:adele},
    we have
    $$
    \begin{aligned}
    \theta_*(2\del / \del \overline{z_2}) & = \theta_*((0,1) + i(0,i)) 
    		= \left(  \begin{matrix} 0 & -1 \\ 1 & 0 \end{matrix} \right) + i\left(  \begin{matrix} 0 & i \\ i & 0 \end{matrix} \right) 
            = i(X - iY).
    \end{aligned}$$

    \begin{exa}
    \label{exa:there_are_examples}
        Let $L = X - iY$ and consider the structure $\frk h = \operatorname{span}_\C \{L, T\}$. This is clearly a left-invariant elliptic structure.
    \end{exa}

%\subsubsection{Involutive structures on \texorpdfstring{$\operatorname{SU}(3)$}{\operatorname{SU}(3)}}

    We recall that $\operatorname{SU}(3)$ is the group of all unitary $3 \times 3$ matrices with complex coefficients having determinant 1. It is a real Lie group, compact, and has dimension 8 and rank 2. The Lie algebra of $\operatorname{SU}(3)$ is denoted by $\frk{su}(3)$ and is the set of all traceless skew-Hermitian $3 \times 3$ matrices with complex coefficients. The following matrices form a basis for $\frk{su}(3)$:
    % $$
    % \begin{aligned}
    % T_1 &= \left( \begin{matrix} i & 0 & 0 \\ 0 & -i & 0 \\ 0 & 0 & 0 \end{matrix} \right), & \quad T_2 &= \left( \begin{matrix} i & 0 & 0 \\ 0 & i & 0 \\ 0 & 0 & -2i\end{matrix} \right), \\
    % X_1 &= \left( \begin{matrix} 0 & i & 0 \\ i & 0 & 0 \\ 0 & 0 & 0 \end{matrix} \right), & \quad Y_1 &= \left( \begin{matrix} 0 & -1 & 0 \\ 1 & 0 & 0 \\ 0 & 0 & 0\end{matrix} \right),  \\
    % X_2 &= \left( \begin{matrix} 0 & 0 & i \\ 0 & 0 & 0 \\ i & 0 & 0 \end{matrix} \right), & \quad Y_2 &= \left( \begin{matrix} 0 & 0 & -1 \\ 0 & 0 & 0 \\ 1 & 0 & 0\end{matrix} \right),  \\
    % X_3 &= \left( \begin{matrix} 0 & 0 & 0 \\ 0 & 0 & i \\ 0 & i & 0 \end{matrix} \right), & \quad Y_3 &= \left( \begin{matrix} 0 & 0 & 0 \\ 0 & 0 & -1 \\ 0 & 1 & 0\end{matrix} \right).  \\
    % \end{aligned}
    % $$
    
    $$
    \begin{aligned}
    T_1 &= \left( \begin{matrix} i & 0 & 0 \\ 0 & -i & 0 \\ 0 & 0 & 0 \end{matrix} \right), & \quad X_1 &= \left( \begin{matrix} 0 & i & 0 \\ i & 0 & 0 \\ 0 & 0 & 0 \end{matrix} \right), & \quad  X_2 &= \left( \begin{matrix} 0 & 0 & i \\ 0 & 0 & 0 \\ i & 0 & 0 \end{matrix} \right), & \quad X_3 &= \left( \begin{matrix} 0 & 0 & 0 \\ 0 & 0 & i \\ 0 & i & 0 \end{matrix} \right), \\ 
    T_2 &= \left( \begin{matrix} i & 0 & 0 \\ 0 & i & 0 \\ 0 & 0 & -2i\end{matrix} \right) , & \quad Y_1 &= \left( \begin{matrix} 0 & -1 & 0 \\ 1 & 0 & 0 \\ 0 & 0 & 0\end{matrix} \right), & \quad Y_2 &= \left( \begin{matrix} 0 & 0 & -1 \\ 0 & 0 & 0 \\ 1 & 0 & 0\end{matrix} \right), & \quad Y_3 &= \left( \begin{matrix} 0 & 0 & 0 \\ 0 & 0 & -1 \\ 0 & 1 & 0\end{matrix} \right).  \\
    \end{aligned}
    $$

We have the following relations between the coefficients. Each cell corresponds to the commutator between the first element of the line and the first element of the column. Since the commutator is anti-symmetric, we omitted half of the commutators.

% $$[T_1, T_2] = 0,$$
% $$ [T_1, X_1] = 2Y_1, \quad [T_1, Y_1] = -2X_1 $$
% $$ [T_1,X_2] = Y_2, \quad [T_1, Y_2] = - X_2 $$
% $$ [T_1, X_3] = - Y_3, \quad [T_1, Y_3] = X_3 $$
% $$ [T_2, X_1] = [T_2, Y_1] = 0 $$
% $$ [T_2, X_2] = 3Y_2, \quad [T_2, Y_2] = -3X_2$$
% $$ [T_2, X_3] =  3Y_1, \quad [T_2, Y_3] = -3X_3$$

\bgroup
\def\arraystretch{1.5}%
\begin{center}\begin{tabular}{c || c | c | c | c | c | c | c | c}
         & $T_1$ & $T_2$ & $X_1$  & $Y_1$   & $X_2$       &  $Y_2$       & $X_3$     & $Y_3$   \\
\hline
\hline
$T_1$    & $0$   & $0$   & $2Y_1$ & $-2X_1$ & $Y_2$       &  $-X_2$      & $-Y_3$   & $X_3$       \\
\hline
$T_2$    & -     & $0$   & $0$    & $0$     & $3Y_2$      &  $-3X_2$     & $3Y_3$   & $-3X_3$     \\
\hline
$X_1$    & -     &  -    & 0      & $2T_1$  & $Y_3$       &  $-X_3$      & $Y_2$    & $-X_2$      \\
\hline
$Y_1$    & -     & -     & -      & 0       & $X_3$       &  $Y_3$       & $-X_2$   & $-Y_2$      \\
\hline
$X_2$    & -     & -     & -      & -       & 0           &  $T_2 + T_1$ & $Y_1$    & $X_1$       \\
\hline
$Y_2$    & -     & -     & -      & -       & -           & 0            & $-X_1$   & $Y_1$       \\
\hline
$X_3$    & -     & -     & -      & -       & -           & -            & 0        & $T_2 - T_1 $\\
\hline
$Y_3$    & -     & -     & -      & -       & -           & -            & -        & 0           \\

\end{tabular}\end{center}
\egroup

    \begin{exa}
        Considering the commutators we just computed, we have an obvious CR structure. We define the following vector fields on $\frk{su}(3) \otimes \C$:
        $$L_1 = X_1 - iY_1, \quad L_2 = X_2 - iY_2, \quad L_3 = X_3 - iY_3 .$$
        
        Also, from the commutators, it follows that, for each $j,k$, there exists a real number $\Lambda_{jk}$ such that $[T_j,X_k] = \Lambda_{jk} Y_k$ and $[T_j,Y_k] = -\Lambda_{jk} X_k$. By combining these last two equations,
        we have $ [T_j, L_k] = -i\Lambda_{jk} L_k $, which means that each $L_k$ is an eigenvector for the map $\operatorname{ad}_{T_j}$.
        
        Notice that $[L_1,L_2] = 0$, $[L_1, L_3] = 2iL_2$, and $[L_2,L_3] = 0$. Thus, $$\frk h = \operatorname{span}_\C \{L_1, L_2, L_3\}$$ is a Lie subalgebra of $\frk{su}(3) \otimes \C$, which is clearly a CR subalgebra.
    \end{exa}

    Now we use the structure just defined to construct a hypocomplex and an elliptic one.

    \begin{exa}
        Let $\frk h' = \operatorname{span}_\C\{L_1, L_2, L_3, aT_1 + bT_2\}$. Notice that $L_1, L_2, L_3$, and their conjugates are eigenvectors for $\operatorname{ad}_{T_j}$ and that $[aT_1 + bT_2, aT_1 + bT_2] = 0$. Therefore, this is a Lie subalgebra of $\frk{su}(3) \otimes \C$.
    \end{exa}

    We claim that this structure is hypocomplex. A generic element of $\frk h'$ is of the form $ Z = \alpha L_1 + \beta L_2 + \gamma L_3 + \delta U$ with $ U = aT_1 + bT_2$ and $\alpha, \beta, \gamma, \delta$ complex numbers. Therefore, we have that $$[Z, \overline{Z}] = |\alpha|^2 4i T_1 + |\beta|^2 2i(T_2 + T_1) + |\gamma|^2 2i(T_2 - T_1) + W,$$ in which $W$ is a linear combination of $X_j, Y_j$, for $j = 1, 2, 3$.
    
    For any $\theta \in \frk h^\perp_0$, we can write $ \theta = t(-b \tau_1 + a \tau_2)$, with $t \in \R$ and $t \neq 0$. To simplify the proof, we are take $t=1$. Hence, the Levi form of $\frk h'$ at $(e,\theta)$ can be computed as:
    $$
    \begin{aligned}
    \mathscr L_{\theta} (Z,Z) & = \frac{1}{2i}\theta([Z, \overline{Z}]) \\
    	& =  -2|\alpha|^2 b + |\beta|^2(a - b) + |\gamma|^2(a+b)  \\
        & = b(|\gamma|^2 - 2|\alpha|^2 - |\beta|^2) + a(|\beta|^2 + |\gamma|^2).
    \end{aligned}
    $$
    
    From this expression, if $b \neq 0$, we can easily see that, for any $\theta \in \frk h^\perp_0$, it is possible to find a $Z$ such that $\mathscr L_{\theta} (Z,Z) < 0$.
    Therefore, by Theorem \ref{thm:BCT}, we conclude that this structure is hypocomplex.

    \begin{exa}
        Consider the structure $\frk h'' = \operatorname{span}_\C\{L_1, L_2, L_3, T_1, T_2\}$. Notice that $L_1, L_2, L_3$, and their conjugates are eigenvectors for $\operatorname{ad}_{T_j}$ for $j = 1, 2$ and that $[T_1, T_2] = 0$. This structure is clearly elliptic.
    \end{exa}

%%%%%%%%%%%%%%%%%%%%%%%%%%
%%% COMPACT LIE GROUPS %%%
%%%%%%%%%%%%%%%%%%%%%%%%%%

\subsection{Left-invariant cohomologies}
\label{sec:ce_tech}

%One of the objectives of this work
We want to show that, under some reasonable conditions, we can compute the cohomology of the complex \eqref{h_complex} by restricting our attention only to left-invariant forms. To accomplish this, we recall some basic definitions and explain some techniques that Claude Chevalley and Samuel Eilenberg used in \cite{chevalley1948cohomology} to study the cohomology of the de Rham complex.

Then we extend some of these techniques to left-invariant cohomologies. Finally, we discuss conditions for usual cohomology spaces to be isomorphic to the left-invariant cohomology spaces.

\subsubsection{Left-invariant de Rham cohomology}

Let $G$ be a compact Lie group and $\SF{G}{p}{}$ be the set of all smooth $p$-forms on $G$. We recall that a vector field $X$ is called left-invariant if, for every $x \in G$, we have
$ (L_x)_* X = X$. Also, a differential form $u \in \SF{G}{p}{})$ is called left-invariant if, for every $x \in G$, we have $ (L_x)^* u = u $. We denote by $\SF{G}{p}{L}$ the set of all left-invariant $p$-forms. Since the exterior derivative commutes with pullbacks, we have that if $u \in \SF{G}{p}{L}$, then $\dext u \in \SF{G}{p+1}{L}$

Considering the notation from Section \ref{sec:involutive_structures},
we denote the $p$-cocycles by $Z^p(G)$, the $p$-coboundaries by $ B^p(G)$, and the $p$-cohomology classes by $H^p(G) = Z^p(G) / B^p(G).$ We also define in the obvious way the left-invariant $p$-cocycles, left-invariant $p$-coboundary, and left-invariant $p$-cohomology classes, denoted, respectively, by $Z^p_L(G)$, $B^p_L(G)$, and $H^p_L(G)$.

Notice that, for each integer $p$, the inclusion $\SF{G}{p}{L} \hookrightarrow \SF{G}{p}{}$ induces a linear mapping $$i_p : H^p_L(G) \to H^p(G).$$

We show in this section that this linear map is an isomorphism. In order to do this, we need some basic tools. We start by defining the averaging operator.

%%%%%%%%%%%%%%%%%%%%%%%
%%% AVERAGING TRICK %%%
%%%%%%%%%%%%%%%%%%%%%%%

\begin{lem}
\label{averaging_operator_lemma}
 Let $u \in \SF{G}{p}{}$. The differential form
\begin{equation}
\label{averaging_operator}
  Au = \int_G L^*_x u \dint \mu(x)
\end{equation}
has the following properties:
\begin{enumerate}
\item \label{deRhamLeftInvariantCommutative} $\dext (Au) = A(\dext u)$;
\item $Au$ is left-invariant;
\item if $u$ is left-invariant, then $u = Au$.
\end{enumerate}
\end{lem}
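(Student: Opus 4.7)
The plan is to derive all three properties directly from standard properties of the bi-invariant Haar measure $\mu$ on the compact Lie group $G$, which we normalize so that $\mu(G)=1$. The only subtle point is justifying the interchange of the exterior derivative with the Haar integral; the rest reduces to formal manipulations and a change-of-variable.

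First, for property (3), if $u$ is left-invariant then $L_x^*u = u$ for every $x\in G$, so
\begin{equation*}
Au = \int_G L_x^*u\,\dint\mu(x) = u\int_G \dint\mu(x) = u,
\end{equation*}
using the normalization $\mu(G)=1$.

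Next, for property (2), I compute $L_y^*(Au)$ for an arbitrary $y\in G$. Pulling back under the integral sign (legitimate because $L_y^*$ acts pointwise and linearly on each fiber) and using the functoriality $L_y^*L_x^* = (L_x\circ L_y)^* = L_{xy}^*$, I get
\begin{equation*}
L_y^*(Au) = \int_G L_{xy}^*u\,\dint\mu(x).
\end{equation*}
The substitution $z=xy$ leaves the Haar measure invariant (the right-invariance of $\mu$, which on a compact Lie group is automatic from left-invariance), so the right-hand side equals $\int_G L_z^*u\,\dint\mu(z) = Au$. Hence $Au$ is left-invariant.

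For property (1), the identity $\dext\circ L_x^* = L_x^*\circ\dext$ (naturality of the exterior derivative under pullbacks) applied pointwise under the integral gives, at least formally,
\begin{equation*}
\dext(Au) = \dext\int_G L_x^*u\,\dint\mu(x) = \int_G \dext(L_x^*u)\,\dint\mu(x) = \int_G L_x^*(\dext u)\,\dint\mu(x) = A(\dext u).
\end{equation*}
The single nontrivial step is the interchange of $\dext$ and the integral. The cleanest way to justify this is to evaluate both sides on an arbitrary smooth vector field tuple at a point: the map $(x,g)\mapsto (L_x^*u)_g(v_1,\dots,v_p)$ is smooth on the compact manifold $G\times G$, and therefore differentiation under the integral sign is valid by the standard parameter-integral theorem, reducing the claim to an equality of scalar-valued integrals. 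The main (and only) obstacle is this analytic justification; once it is in place, properties (2) and (3) are essentially algebraic consequences of bi-invariance and normalization of Haar measure.
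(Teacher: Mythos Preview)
Your proof is correct. The paper itself does not give a proof of this lemma; it simply remarks that the result is a particular case of a more general statement proved in \cite[page 90]{chevalley1948cohomology}. What you have written is essentially the standard direct argument (bi-invariance of Haar measure, naturality of $\dext$ under pullbacks, differentiation under the integral sign), which is exactly the kind of proof one would find by unpacking that reference.
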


    This lemma is just a particular case of a more general result stated and proved at \cite[page 90]{chevalley1948cohomology}. %, so the proof is omitted.

    It is easy to show that $i_p$ is injective. Let $[u]$ be a class in $H^p_L(G)$ and suppose that it is 0 in $H^p(G)$, then there exist a smooth form $v$ such that $u = \dext v$. Notice that $u = Au = A\dext v = \dext (A v).$ That is, $[u]$ is 0 in  $H^p_L(G)$. The surjectivity follows from the de Rham's Theorem on the duality between de Rham cohomology and singular homology and the fact that singular homology is left-invariant. This is proved in detail in \cite{chevalley1948cohomology}.

    Therefore, for each $p$, $i_p$ is an isomorphism of vector spaces and so $ H^p(G) \cong H^p_L(G).$ These isomorphisms simplify a lot of computations because they show that we can restrict our attention to left-invariant forms. Also, left-invariant forms are completely determined by their values at the tangent space at the identity. In other words, we reduced the problem to a problem in linear algebra.

%%%%%%%%%%%%%%%%%%%%%%%%%%%%%%%%%%%%%%%%%%%%%%%%%%%%%%%
%%% THE LEFT-INVARIANCE OF THE BI-DEGREE COHOMOLOGY %%%
%%%%%%%%%%%%%%%%%%%%%%%%%%%%%%%%%%%%%%%%%%%%%%%%%%%%%%%

\subsubsection{Left-invariant cohomology relative to a subalgebra}
\label{sec:left_invariant_bidegree_cohomology}

We define $N^{0,q}(G; \frk h) = \mathcal C^\infty(G; \Lambda^q)$ and, for $p > 0$, we define
$$N^{p,q}(G; \frk h) = \{ u \in \mathcal C^\infty(G; \Lambda^{p+q}): u(X_1, \ldots, X_{p+q}) = 0 \text{ when } q+1 \text{ arguments are in } \frk h \}.$$

Also, we define $N^{p,q}_{L}(G; \frk h) \doteq N^{p,q}(G; \frk h) \cap \SF{G}{p+q}{L}$ and, since $\dext \SF{G}{p}{L} \subset \SF{G}{p+q+1}{L},$ we can restrict the exterior derivative and obtain
$$ \dext_L : N^{p,q}_{L}(G; \frk h) \to N^{p,q+1}_{L}(G; \frk h).$$
Therefore, $\dext_L$ induces a coboundary operator on the quotient
$$ \SF{G}{p,q}{L} = N^{p,q}_{L}(G; \frk h) / N^{p+1,q-1}_{L}(G; \frk h),$$
which is denoted by $\dext^{p,q}_{\frk h, L}.$ When there is no risk of confusion, we omit the bidegree writing only $\dext_{\frk h, L}$. When the structure $\frk h$ is obvious by the context, we write $\dext'_{L}$.

For $p \geq 0$, we denote the set of the left-invariant $(p,q)$-cocycles elements by $$ Z_{L}^{p,q} (G; \frk h) = \ker \left( \dext'_L : \SF{G}{p,q}{L}) \to \SF{G}{p,q+1}{ L}) \right),$$
the set of left-invariant $(p,q)$-coboundaries by
 $$ B^{p,q}_{L}(G; \frk h) = \img \left( \dext'_L : \SF{G}{p,q-1}{L}) \to \SF{G}{p,q}{L} \right),$$
and the left-invariant $(p,q)$-cohomology classes by
 $$ H^{p,q}_{L}(G; \frk h) = \frac{Z^{p,q}_{L
 }(G; \frk h)}{B^{p,q}_{L}(G; \frk h)}.$$

%%%%%%%%%%%%%%%%%%%%%
%%% Commutativity %%%
%%%%%%%%%%%%%%%%%%%%%

%\subsection{Commutativity}

Denote by $ \pi^{p,q}_{\frk h} : N^{p,q}(G; \frk h) \to \SF{G}{p,q}{}$ and $ \pi^{p,q}_{\frk h, L} : N^{p,q}_{L}(G; \frk h) \to \SF{G}{p,q}{L}$ the quotient maps.

\begin{lem}
\label{lem:op_A_and_pi_commutes}
The operator $A$ defined on Lemma \ref{averaging_operator_lemma} satisfies:
$ A(N^{p,q}(G; \frk h)) \subset N^{p,q}(G; \frk h). $
Thus, there is an unique operator $A^{p,q}_{\frk h}$ such that $A^{p,q}_{\frk h} \circ \pi^{p,q} = \pi^{p,q}_L \circ A$. 
%making the following diagram commutative:
% $$
% \xymatrix
% {
%   N^{p,q}(G; \frk h) \ar[r]^{A} \ar[d]_{\pi^{p,q}} & N^{p,q}_{L}(G; \frk h) \ar[d]^{\pi^{p,q}_L} \\
%   \SF{G}{p,q}{} \ar[r]_{A^{p,q}_{\frk h} } & \SF{G}{p,q}{L}
% }
% $$
\end{lem}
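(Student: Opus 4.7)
The plan is to reduce both claims to the single geometric fact that the subbundle $\frk h \subset \C TG$ is preserved by the differentials of left-translations, i.e.\ $(L_x)_*\frk h_g \subset \frk h_{xg}$ for every $x,g \in G$. Once this is in place, the first statement follows from a pointwise bookkeeping argument and the second from the universal property of the quotient.

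First I would verify that each pullback $L_x^*$ preserves $N^{p,q}(G; \frk h)$. Take $u \in N^{p,q}(G;\frk h)$, a point $g \in G$, and tangent vectors $v_1,\ldots,v_{p+q} \in T_gG$ such that at least $q+1$ of them lie in $\frk h_g$. The left-invariance of $\frk h$ yields $(L_x)_* v_i \in \frk h_{xg}$ whenever $v_i \in \frk h_g$, so at least $q+1$ of the pushed-forward vectors lie in $\frk h_{xg}$, whence
$$(L_x^*u)_g(v_1,\ldots,v_{p+q}) = u_{xg}\bigl((L_x)_*v_1,\ldots,(L_x)_*v_{p+q}\bigr) = 0.$$
Since the defining vanishing condition of $N^{p,q}(G; \frk h)$ is both pointwise and linear, it is inherited by the Haar integral, giving $Au \in N^{p,q}(G; \frk h)$. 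Combined with the left-invariance of $Au$ supplied by Lemma~\ref{averaging_operator_lemma}, this yields the first assertion $A(N^{p,q}(G;\frk h)) \subset N^{p,q}_L(G;\frk h)$.

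To construct $A^{p,q}_{\frk h}$ and check the commutation relation, I would apply exactly the same argument in bidegree $(p+1,q-1)$ to obtain $A(N^{p+1,q-1}(G;\frk h)) \subset N^{p+1,q-1}_L(G;\frk h)$. Since $\pi^{p,q}_L$ is by definition the quotient map that kills $N^{p+1,q-1}_L(G;\frk h)$, the composition $\pi^{p,q}_L \circ A$ vanishes on the kernel $N^{p+1,q-1}(G;\frk h)$ of $\pi^{p,q}_{\frk h}$. The universal property of the quotient then produces a unique linear operator $A^{p,q}_{\frk h} : \SF{G}{p,q}{} \to \SF{G}{p,q}{L}$ with $A^{p,q}_{\frk h} \circ \pi^{p,q}_{\frk h} = \pi^{p,q}_L \circ A$, as required.

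I do not anticipate a genuine obstacle: the entire content of the lemma is that the algebraic condition cutting out $N^{p,q}$ is invariant under pullback by diffeomorphisms whose differential preserves $\frk h$, which is precisely what left-invariance of the structure asserts. The only mild point worth spelling out is the interchange of the vanishing condition with the Haar integral, which is immediate because evaluation at a fixed $(g; v_1,\ldots,v_{p+q})$ is a continuous linear functional on $\SF{G}{p+q}{}$.
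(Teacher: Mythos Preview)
Your proof is correct and follows essentially the same approach as the paper: both arguments use that left-invariance of $\frk h$ implies $(L_x)^*$ preserves $N^{p,q}(G;\frk h)$, hence so does the Haar average, and then pass to the quotient. The only cosmetic difference is that the paper evaluates on left-invariant vector fields (so that $(L_x)_*X_j = X_j$ for the first $q+1$ arguments) whereas you work pointwise with tangent vectors; your formulation is slightly more explicit about why the induced quotient map exists.
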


\begin{proof}
In fact, let $X_1, \ldots, X_{p+q} \in \frk X(G)$ and suppose that $q+1$ of these vector fields are in $\frk h$. We can assume that $X_1, \ldots, X_{q+1} \in \frk h$. It follows that
$$
\begin{aligned}
 Au (X_1, \ldots, X_{p+q}) & = \int_G (L^*_x u) (X_1, \ldots, X_{p+q}) \dint \mu(x) \\
	& = \int_G u ((L_x)_*X_1, \ldots, (L_x)_*X_{p+q}) \mu(x) \\
	& = \int_G u (X_1, \ldots, X_{q+1}, (L_x)_*X_{q+2},\ldots, (L_x)_*X_{p+q}) \mu(x)
	&= 0.
\end{aligned}
$$
The operator $A^{p,q}_{\frk h} : \SF{G}{p,q}{} \to \SF{G}{p,q}{\frk h,L}$ is defined by $A^{p,q}_{\frk h} (\pi^{p,q}(u)) =  \pi_{p,q+1,L}(Au).$ The uniqueness is obvious.
\end{proof}

Notice that the operator $ A^{p,q}_{\frk h} $ satisfies
$ \dext^{p,q}_{\frk h,L} \circ A^{p,q}_{\frk h} = A^{p,q}_{\frk h} \circ \dext^{p,q}_{\frk h}.$
This follows directly from the definition of $A^{p,q}_{\frk h}$ and from the fact that $A$ and $\dext$ commute.

%%%%%%%%%%%%%%%%%%%%%%%%%%%%%%%%%%%%%%%%%%%%%%%%%
%%% Sufficient conditions for the isomorphism %%%
%%%%%%%%%%%%%%%%%%%%%%%%%%%%%%%%%%%%%%%%%%%%%%%%%

\subsubsection{Necessary and sufficient conditions for the isomorphism}
\label{subsubsection_necessary_sufficient}

We denote by $\phi^{p,q}_{\frk h}$ the quotient map $\phi^{p,q}: Z^{p,q}(G; \frk h) \to H^{p,q}(G; \frk h)$ and by $\phi^{p,q}_{\frk h, L}$ the quotient map $\phi^{p,q}_{\frk h, L}: Z^{p,q}_{L}(G; \frk h) \to H^{p,q}_{L}(G; \frk h).$

Notice that, if $u \in \SF{G}{p,q}{L}$, we can write $ u = \phi^{p,q}_{\frk h, L}(u') $, in which $ u' \in N^{p,q}_{L}(G; \frk h) \subset N^{p,q}(G; \frk h) $. Also, we can define $i(u) = \phi^{p,q}_{\frk h}(u') \in \SF{G}{p,q}{}$ so we have a map $ i : \SF{G}{p,q}{L} \to \SF{G}{p,q}{}.$
Since this map is injective, we identify the set $\SF{G}{p,q}{L} $ with its image
under $i$ and we assume that $\SF{G}{p,q}{L} \subset \SF{G}{p,q}{} $. The map $ i : \SF{G}{p,q}{L} \to \SF{G}{p,q}{}$ satisfies with $\dext_{\frk h} \circ i = i \circ \dext_{\frk h, L}$, so it induces a map
$  i : H^{p,q}_{L}(G; \frk h) \to H^{p,q}(G; \frk h).$

\begin{lem}
\label{inclusion_left_invatiant_cohomology_injective} The map $i : H^{p,q}_{L}(G; \frk h) \to H^{p,q}(G; \frk h)$ is injective.
\end{lem}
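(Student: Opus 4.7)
The plan is to show that a left-invariant cocycle which becomes a coboundary in the full complex is already a coboundary in the left-invariant subcomplex, by averaging a primitive. This is the natural analog of the injectivity argument Chevalley and Eilenberg use in the de Rham case, and the averaging operator $A$ from Lemma \ref{averaging_operator_lemma}, together with its compatibility with the filtration $N^{p,q}(G;\frk h)$ established in Lemma \ref{lem:op_A_and_pi_commutes}, is exactly what makes this argument go through in the present setting.

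More concretely, I would start with a class $[u]_L\in H^{p,q}_L(G;\frk h)$ whose image in $H^{p,q}(G;\frk h)$ is zero. Choose a representative $u'\in N^{p,q}_L(G;\frk h)$ so that $u=\pi^{p,q}_{\frk h,L}(u')$. Since the image $\pi^{p,q}_{\frk h}(u')$ is a $\dext'_{\frk h}$-coboundary in $\SF{G}{p,q}{}$, by definition of the quotient complex there exists $v\in N^{p,q-1}(G;\frk h)$ such that
\[
 u'-\dext v \;\in\; N^{p+1,q-1}(G;\frk h).
\]
This is the translation of ``exact in the quotient'' to the level of the filtered complex $N^{\bullet,\bullet}(G;\frk h)$.

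Now apply the averaging operator. By Lemma \ref{averaging_operator_lemma}, $A$ commutes with $\dext$ and fixes left-invariant forms, so $A(u'-\dext v)=u'-\dext(Av)$. By Lemma \ref{lem:op_A_and_pi_commutes} applied in bidegrees $(p,q-1)$ and $(p+1,q-1)$, we have $Av\in N^{p,q-1}_L(G;\frk h)$ and the right-hand side lies in $N^{p+1,q-1}_L(G;\frk h)$. Passing back to the left-invariant quotient $\SF{G}{p,q}{L}$, this says exactly
\[
 u \;=\; \pi^{p,q}_{\frk h,L}(u') \;=\; \pi^{p,q}_{\frk h,L}(\dext(Av)) \;=\; \dext'_{\frk h,L}\bigl(\pi^{p,q-1}_{\frk h,L}(Av)\bigr),
\]
so $[u]_L=0$ in $H^{p,q}_L(G;\frk h)$, and $i$ is injective.

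There is no real obstacle here; the only thing to check carefully is that $A$ preserves each $N^{p+r,q-r}(G;\frk h)$ for $r\ge 0$, but this is immediate from the proof of Lemma \ref{lem:op_A_and_pi_commutes} (the argument used there for $N^{p,q}$ only relied on having at least $q+1$ of the arguments in $\frk h$, and the same reasoning applies to $N^{p+1,q-1}$ with $q$ replaced by $q-1$). Everything else is bookkeeping with the quotient maps $\pi^{p,q}_{\frk h}$ and $\pi^{p,q}_{\frk h,L}$ and the identification of $\SF{G}{p,q}{L}$ with its image in $\SF{G}{p,q}{}$.
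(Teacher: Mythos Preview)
Your proof is correct and follows essentially the same approach as the paper: both average a primitive $v$ to obtain a left-invariant primitive $Av$, using that $A$ commutes with $\dext'$ and fixes left-invariant forms. The only difference is cosmetic---you carry out the argument at the level of the filtered spaces $N^{p,q}$, whereas the paper works directly in the quotients $\SF{G}{p,q}{}$ using the induced averaging operator $A^{p,q}_{\frk h}$ from Lemma~\ref{lem:op_A_and_pi_commutes}.
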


\begin{proof}
 Let $u \in H^{p,q}_{L}(G; \frk h)$, with $i(u) = 0$. We write $u = \pi^{p,q}_{\frk h, L}(u')$, in which $u' \in \SF{G}{p,q}{L}$ satisfies $\dext'_L u' = 0$. Since $i(u) = 0$, there exists $v \in \SF{G}{p,q}{}$ with $\dext' v = u$. Thus, we have that $\dext'(Av) = A \dext' v = A u = u$
 and so $u = 0$.
\end{proof}

Since the map $i : H^{p,q}_{L}(G; \frk h) \to H^{p,q}(G; \frk h)$ is always injective, we can already show some algebraic obstructions that can prevent the cohomology space $H^{p,q}(G; \frk h)$ from being zero.
That is, we always have $ \dim H^{p,q}_{L}(G; \frk h) \leq \dim H^{p,q}(G; \frk h)$.

To show that the map $i : H^{p,q}_{L}(G; \frk h) \to H^{p,q}(G; \frk h)$ is surjective, given $v \in \SF{G}{p,q}{}$, we have to find $u \in \SF{G}{p,q}{L}$ such that $v - u = \dext' w$ for some $w \in \SF{G}{p,q-1}{}$.
The natural candidate for $u$ is $Av$. Thus, we want to find a $w \in \SF{G}{p,q-1}{}$ satisfying $Av - v = \dext' w$.
In other words, we want to solve the following problem: given $v \in \ker A^{p,q}_{\frk h}$ with $\dext' v = 0$, can we find $u \in \SF{G}{p,q}{}$ satisfying $\dext' u = v$? If $\frk h$ is a involutive structure such that this problem has a solution in degree $(p,q)$, then we say that it has \textit{property (K) in degree $(p,q)$}.

\begin{exa}
 In the case in which $\frk h = \frk g$, we have that $\dext'$ is the usual exterior derivative. Also, de Rham's theorem implies that this structure always have property (K).
\end{exa}

\begin{lem}
 Suppose that the map $i : H^{p,q}_{\frk h, L}(G) \to H^{p,q}_{\frk h}(G)$ is surjective. Then the involutive structure $\frk h$ has property (K) in degree $(p,q)$.
\end{lem}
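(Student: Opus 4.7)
The plan is to exploit the averaging operator $A^{p,q}_{\frk h}$ together with the surjectivity hypothesis on $i$, running the argument of Lemma~\ref{inclusion_left_invatiant_cohomology_injective} in reverse. I start with an arbitrary $v \in \SF{G}{p,q}{}$ satisfying $\dext' v = 0$ and $A^{p,q}_{\frk h}(v) = 0$, and the goal is to produce $\eta \in \SF{G}{p,q-1}{}$ with $\dext' \eta = v$. By surjectivity of $i$, I can find a left-invariant cocycle $w \in Z^{p,q}_{L}(G; \frk h)$ and a form $u_0 \in \SF{G}{p,q-1}{}$ such that
\begin{equation}
\label{plan:decomp}
v = i(w) + \dext' u_0.
\end{equation}

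The key step is to apply $A^{p,q}_{\frk h}$ to \eqref{plan:decomp} and combine three facts. First, $A^{p,q}_{\frk h}(v) = 0$ by hypothesis. Second, $A^{p,q}_{\frk h}(i(w)) = w$, because any lift $\tilde w \in N^{p,q}_{L}(G; \frk h)$ of $w$ is already left-invariant, so $A\tilde w = \tilde w$ by Lemma~\ref{averaging_operator_lemma}, and the definition of $A^{p,q}_{\frk h}$ via $\pi^{p,q}_{\frk h,L}$ recovers $w$. Third, $A^{p,q}_{\frk h}(\dext' u_0) = \dext'_{L}\bigl(A^{p,q-1}_{\frk h}(u_0)\bigr)$, by the commutation of $A^{p,q}_{\frk h}$ with $\dext'$ noted earlier in the section. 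Together these give $0 = w + \dext'_{L}\bigl(A^{p,q-1}_{\frk h}(u_0)\bigr)$, so setting $\zeta := A^{p,q-1}_{\frk h}(u_0) \in \SF{G}{p,q-1}{L}$ produces $w = -\dext'_{L}\zeta$; that is, $w$ is already a left-invariant coboundary.

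To finish, apply $i$ to $w = -\dext'_{L}\zeta$ and use the intertwining $i \circ \dext'_{L} = \dext' \circ i$ to obtain $i(w) = -\dext'(i(\zeta))$. Substituting back into \eqref{plan:decomp} yields
\[
v \;=\; \dext' u_0 + i(w) \;=\; \dext'\!\bigl(u_0 - i(\zeta)\bigr),
\]
which exhibits $v$ as $\dext'$-exact and establishes property~(K) in degree $(p,q)$. There is no substantial obstacle here; the only care required is the bookkeeping distinction between forms in $N^{p,q}(G;\frk h)$ (or $N^{p,q}_{L}(G;\frk h)$) and their classes in the quotients $\SF{G}{p,q}{}$ and $\SF{G}{p,q}{L}$. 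All the compatibilities one needs---that $A$ preserves $N^{p,q}(G;\frk h)$ (Lemma~\ref{lem:op_A_and_pi_commutes}), that $A^{p,q}_{\frk h}$ commutes with $\dext'$ on the quotients, and that $i$ intertwines $\dext'_{L}$ with $\dext'$---have already been verified in the preceding development, so the argument reduces to the three-line averaging computation described above.
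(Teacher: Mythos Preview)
Your argument is correct and follows essentially the same route as the paper's proof: start from $v$ with $\dext' v = 0$ and $A v = 0$, use surjectivity of $i$ to write $v = i(w) + \dext' u_0$ with $w$ a left-invariant cocycle, apply the averaging operator to get $w = -\dext'(A u_0)$, and substitute back to obtain $v = \dext'(u_0 - A u_0)$. The only difference is notational---you keep track of the distinction between $A$ and $A^{p,q}_{\frk h}$ and between forms and their quotient classes more explicitly than the paper does.
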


\begin{proof}
In fact, let $v \in \SF{G}{p,q}{}$ such that $\dext' v = 0$ and satisfying $Av = 0$. By hypothesis, there exists $u \in \SF{G}{p,q}{\frk h,L}$ satisfying $\dext' u = 0$ and $v - u = \dext' \alpha$. By applying $A$ on both sides, we obtain $Av - Au = \dext'(A\alpha)$. Since $Av = 0$ and $Au = u$, we have $u = -\dext'(A\alpha)$ and so
$v = \dext'(\alpha - A\alpha)$.
\end{proof}

\begin{exa}
 In the complex case, when the group $G$ is semisimple, it is possible to use a result by Bott \cite{bott1957homogeneous} to see that we always have property (K). The application of the Bott's result was made explicit by Pittie on \cite{pittie1988dolbeault}.
\end{exa}

\begin{exa}
 Consider the Example \ref{exa:torus_liouvile} and notice that, if $(\xi_0, \eta_0) \neq (0,0)$, then $u = e^{i (\xi x + \tau t)  } \dext x \in \SF{\T^2}{0,1}{L}$ satisfies $Au = 0$. If  $(\xi_0, \eta_0)$ is a zero of $\xi - \mu\tau = 0$,
 we \emph{cannot} solve the problem $\dext' f = u$ for $f \in \SF{\T^2}{0,1}{L}$. This can be easily seen by using Fourier series.
\end{exa}

\begin{lem}
 If the involutive structure $\frk h$ satisfies property (K) in degree $(p,q)$, then the mapping  $i : H^{p,q}_{L}(G; \frk h) \to H^{p,q}_{}(G; \frk h)$ is surjective.
\end{lem}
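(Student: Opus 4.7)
The plan is to show that every cohomology class in $H^{p,q}(G; \frk h)$ has the averaged form $Av$ as a left-invariant representative, where property (K) is exactly what is needed to compare $v$ with $Av$ at the level of cohomology. So, given a cocycle $v \in \SF{G}{p,q}{}$, my strategy is to produce from it a left-invariant cocycle $Av$ and then show $v - Av$ is a coboundary.

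First, I would apply $A^{p,q}_{\frk h}$ to $v$. By Lemma \ref{lem:op_A_and_pi_commutes} and the identity $\dext^{p,q}_{\frk h, L} \circ A^{p,q}_{\frk h} = A^{p,q}_{\frk h} \circ \dext^{p,q}_{\frk h}$ stated right after it, the form $Av$ is again a cocycle, and by construction it belongs to $\SF{G}{p,q}{L}$. So $Av$ defines a class in $H^{p,q}_L(G; \frk h)$, which under the identification $\SF{G}{p,q}{L} \subset \SF{G}{p,q}{}$ is a candidate preimage under $i$ for $[v]$.

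Next I would analyze the defect $v - Av$. Part (3) of Lemma \ref{averaging_operator_lemma} says $A$ fixes left-invariant forms, so passing to the quotient yields $A^{p,q}_{\frk h} \circ A^{p,q}_{\frk h} = A^{p,q}_{\frk h}$, whence $A^{p,q}_{\frk h}(v - Av) = Av - Av = 0$. Also $\dext'(v - Av) = 0$ because both summands are cocycles. Thus $v - Av$ lies in $\ker A^{p,q}_{\frk h} \cap \ker \dext'$, which is precisely the hypothesis of property (K) in degree $(p,q)$. Invoking property (K), there exists $w \in \SF{G}{p,q-1}{}$ with $\dext' w = v - Av$, so $[v] = [Av]$ in $H^{p,q}(G; \frk h)$, i.e. $i([Av]) = [v]$, which is what surjectivity requires.

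The argument is essentially formal once $A$, its idempotency, and its compatibility with $\dext'$ are in hand. The only point needing care is that these properties transfer from the operator $A$ on $\SF{G}{p+q}{}$ to the induced operator $A^{p,q}_{\frk h}$ on the bidegree quotients, but this has already been established in Lemma \ref{lem:op_A_and_pi_commutes} and the discussion immediately following it, so the proof reduces to assembling these ingredients.
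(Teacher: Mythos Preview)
Your proposal is correct and follows exactly the approach the paper itself sketches in the paragraph immediately preceding the definition of property~(K): take $Av$ as the left-invariant candidate, observe that $v-Av$ is a $\dext'$-closed element of $\ker A^{p,q}_{\frk h}$, and invoke property~(K) to write it as a coboundary. The paper in fact leaves the lemma without a separate proof, treating it as immediate from that discussion; your write-up simply makes the steps explicit.
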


Therefore, we have the following theorem:

\begin{thm}
 If the involutive structure $\frk h$ satisfies property (K) in degree $(p,q)$, the cohomologies $H^{p,q}(G; \frk h)$ and $H^{p,q}_{ L}(G; \frk h)$ are isomorphic.
\end{thm}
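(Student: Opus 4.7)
The plan is to deduce the theorem by combining the injectivity of $i$, already established in Lemma \ref{inclusion_left_invatiant_cohomology_injective}, with the surjectivity statement of the immediately preceding lemma. Since both implications have already been formulated, the theorem amounts to packaging them: $i$ is a linear map between two vector spaces that is simultaneously injective and surjective, hence a linear isomorphism. In this sense the theorem is a corollary of its two preceding lemmas, and the proof should be essentially one sentence citing them.

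If I were to spell the argument out without invoking the preceding lemma, I would proceed as follows. First I would take an arbitrary cohomology class $[v] \in H^{p,q}(G;\frk h)$ and a representative $v \in \SF{G}{p,q}{}$ with $\dext' v = 0$. I would then apply the averaging operator and use Lemma \ref{lem:op_A_and_pi_commutes} together with the identity $\dext^{p,q}_{\frk h,L} \circ A^{p,q}_{\frk h} = A^{p,q}_{\frk h} \circ \dext^{p,q}_{\frk h}$ to conclude that $Av$ is a left-invariant $\dext'_L$-closed form. Next I would look at the difference $v - Av$. Because the operator $A$ is a projection onto left-invariant forms (a consequence of properties 2 and 3 of Lemma \ref{averaging_operator_lemma}, which give $A(Au) = Au$), the form $v - Av$ lies in $\ker A^{p,q}_{\frk h}$ and is $\dext'$-closed. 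Invoking property (K) in degree $(p,q)$, I would then obtain some $w \in \SF{G}{p,q-1}{}$ with $\dext' w = v - Av$, which shows that $v$ and $Av$ are cohomologous in $H^{p,q}(G;\frk h)$; hence $[v] = i([Av])$, proving surjectivity.

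The injectivity half is already handled verbatim by Lemma \ref{inclusion_left_invatiant_cohomology_injective}: if a left-invariant closed form becomes exact after inclusion, say $u = \dext' v$, then applying $A$ yields $u = Au = A\dext' v = \dext'(Av)$, exhibiting $u$ as a left-invariant coboundary. Combining the two halves gives the claimed isomorphism.

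I do not expect a genuine obstacle here: all the heavy lifting has been done in the constructions of the operator $A$, of the quotient operator $A^{p,q}_{\frk h}$, and in the formulation of property (K), which was tailor-made precisely so that this implication becomes immediate. The only subtle point worth double-checking is that the natural candidate $[Av] \in H^{p,q}_{L}(G;\frk h)$ for the preimage of $[v]$ is well-defined at the cochain level, i.e.\ that averaging commutes with both $\dext'$ and with passage to the quotient defining $\SF{G}{p,q}{L}$; this is exactly the content of Lemma \ref{lem:op_A_and_pi_commutes}.
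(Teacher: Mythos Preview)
Your proposal is correct and matches the paper's approach exactly: the paper presents this theorem with no separate proof, introducing it with ``Therefore, we have the following theorem,'' i.e.\ as an immediate corollary of Lemma~\ref{inclusion_left_invatiant_cohomology_injective} (injectivity) and the preceding surjectivity lemma. Your expanded version of the surjectivity argument is also accurate and is precisely the reasoning outlined in the paragraph of the paper defining property~(K).
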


%%%%%%%%%%%%%%%%%%%%%%%%%%%%
%%% HODGE-DE RHAM THEORY %%%
%%%%%%%%%%%%%%%%%%%%%%%%%%%%

\subsection{The Hodge decomposition for the operator \texorpdfstring{$\dext'$}{d'}}

Let $W_1, \ldots, W_N$ be a basis for $\frk g$ with dual basis $w_1, \ldots, w_N$. We endow $\frk g$ with the Hermitian inner product making the basis $W_1, \ldots, W_N$ an orthonormal basis, that is
$W_j \cdot W_k \doteq \delta_{jk}$ for $j,k=1,2, \ldots N.$ Notice that $W_j \cdot W_k = w_j(W_k)$ for $j,k = 1,2, \ldots N.$

We denote by $\Lambda^p \frk g$ the $p$th exterior product of $\frk g$ and we recall that, by using the universal property, we can naturally identify $\Lambda^p \frk g^*$ with $(\Lambda^p \frk g)^*$, the dual of $\Lambda^p \frk g$. We write $W_I = W_{i_1} \wedge \ldots \wedge W_{i_p} \in \Lambda^p \frk g$ and denote its dual by $ w_I =  w_{i_1} \wedge \ldots \wedge w_{i_p}$. Now we have that $\{w_I: |I| = p\}$ is a basis for $\Lambda^p \frk g^*$ and we can define an Hermitian inner product on $\Lambda^p \frk g^*$ by $w_I \cdot w_J = w_I(W_J)$, for all $I,J$ with $ |I|=|J| = p.$

If $u \in \SF{G}{p}{}$, we write $u = \sum_{|I| = p} u_I w_I$ and, for each $x \in G$, we have $u_x = \sum_{|I| = p} u_I(x) w_I \in \Lambda^p \frk g^*.$
We endow $\SF{G}{p}{}$ with the inner product 
$(u,v) \doteq \int_G u_x \cdot v_x \dint \mu(x)$
for $u, v \in \SF{G}{p}{}$.

Without loss of generality, we assume that the first $n$ elements of $W_1, \ldots, W_N$ form a basis for $\frk h$. Also, we denote these elements by $L_1, \ldots, L_n$ and the other elements by $M_1, \ldots, M_m$. That is, $L_j = W_j$, for $j=1,\dots,n$, and $M_j = W_{n+j}$, for $j=1, \ldots, m$.

We denote the dual basis for $\{L_1, \ldots, L_n,M_1, \ldots, M_m\}$ by $\{\tau_1, \ldots, \tau_n,\zeta_1, \ldots, \zeta_m\}$. With this basis, each element of $\SF{G}{p,q}{}$ can be identified with an element in $\SF{G}{p+q}{}$. Thus, each $\SF{G}{p,q}{}$ is endowed with an Hermitian inner product induced by the inner product in $\SF{G}{p,q}{}$.

We finally endow $\SF{G}{p,q}{}$ with the inner product
$(u,v) \doteq \int_G u(x) \cdot v(x) \dint \mu(x)$ 
for $u, v \in \SF{G}{p,q}{}$.

With this inner product, for each degree $(p,q)$, we define the formal adjoint of the operator $\dext' : \SF{G}{p,q}{} \to \SF{G}{p,q+1}{}$ as the unique operator $\delta' : \SF{G}{p,q+1}{} \to \SF{G}{p,q}{}$ such that
$ (\dext'u, v) = (u, \delta'v),$
for all $u \in \SF{G}{p,q}{}$ and $v \in \SF{G}{p,q+1}{}$.

For each pair $p,q$ with $p,q \geq 0$, we define the Hodge operator, also known as Box operator, $\Box^{p,q} : \SF{G}{p,q}{} \to \SF{G}{p,q}{}$
by $\Box^{p,q} = \delta' \circ \dext' + \dext' \circ \delta'.$

%%% and by definition $\Box^{p,q}$ commutes with $\delta^{p,q-1}$ and $\dext^{p,q-1}$.

When $\frk h$ is elliptic, we have the decomposition:
$$ \SF{G}{p,q}{} = \ker \Box^{p,q} \oplus \dext^{p,q-1}(\SF{G}{p,q-1}{}) \oplus \delta^{p,q}(\SF{G}{p,q+1}{}).$$

With this decomposition, we have: suppose that $u \in \SF{G}{p,q}{}$ is such that $\dext' u = 0$. Clearly, we have that $(u,v) = 0$ for all $v \in \delta^{p,q}(\SF{G}{p,q+1}{})$. Thus, if we show that $(u,v) = 0$ for all $v \in \ker \Box^{p,q}$, we have that $u$ is $\dext'$-exact.

%%%%%%%%%%%%%%%%%%%%%%%
%%% LIE DERIVATIVES %%%
%%%%%%%%%%%%%%%%%%%%%%%

\subsubsection{Lie derivatives}

In this section we briefly recall the definition of Lie derivative and explain how it can be extended to act on the spaces $\SF{G}{p,q}{}$.

Let $X$ be a real smooth vector field on $G$. We denote by $\Phi^X$ the flow of $X$ in $G$, that is, $\Phi^X(x, t)$ is a smooth curve on $G$ satisfying $\Phi^X(x, 0) = x$ and $ \dext / \dext t |_{t=0} \Phi^X(x, t) = X(x)$. We also use the notation $\Phi^X_t(x) = \Phi^X(x, t)$. If $f \in \mathcal C^\infty(G)$ is a smooth function, the Lie derivative of $f$ in $x \in G$ is defined by
$$ (\mathscr{L}_X f)(x) = \left. \frac{ \dext }{ \dext t} \right|_{t=0} f(\Phi^X(t,x)).$$
Also, if $u \in \SF{G}{q}{}$ is a smooth $p$-form, its Lie derivative in $x$ is defined by
$$ (\mathscr{L}_X u)_x = \left. \frac{ \dext }{ \dext t} \right|_{t=0} ((\Phi^X_t)^* u)_x.$$

We recall that the Lie derivative has the following properties:
\begin{enumerate}
	\item If $u$ and $v$ are two smooth forms, it holds that
$ \mathscr{L}_X (u \wedge v) = (\mathscr{L}_X u) \wedge v + u \wedge (\mathscr{L}_X v).$
	\item The Lie derivative commutes with the exterior derivative, that is, $ \mathscr{L}_X \circ \dext = \dext \circ \mathscr{L}_X.$
	\item Cartan's Magic Formula:
$ \mathscr{L}_X u = \imath_X (\dext u) + \dext (\imath_X u).$
\end{enumerate}

Notice that, each $Z \in \frk g$ can be written as $Z = X + i Y$ with $X, Y \in \frk g_\R$, and we can define
$$ \mathscr{L}_Z u = \mathscr{L}_X u + i \mathscr{L}_Y u.$$

If $Z \in \frk h$, we can define the Lie derivative on $\SF{G}{p,q}{}$. In fact, if $Z \in \frk h$ and $u \in N^{p,q}(G; \frk h)$, then $\imath_Z u \in N^{p,q-1}(G; \frk h)$. Thus, we have an operator $ \imath_X : N^{p,q}(G; \frk h) \to N^{p,q-1}(G; \frk h)$ satisfying $\imath_Z (N^{p+1,q-1}(G; \frk h)) \subset N^{p+1,q-2}(G; \frk h).$
Therefore, it defines a map
$ \imath_Z : \SF{G}{p,q}{} \to \SF{G}{p,q-1}{}$ such that $\imath_Z \circ \pi^{p,q} = \pi^{p,q-1} \circ \imath_Z$ and so we have $$
\begin{aligned}
  \pi^{p,q} ( \mathscr{L}_Z u ) = & \pi^{p,q}(\imath_Z (\dext u)) + \pi^{p,q}(\dext (\imath_Z u)) \\
	= & \imath_Z (\pi^{p,q+1}(\dext u)) + \dext' ( \pi^{p,q-1}(\imath_Z u)) \\
	= & \imath_Z (\dext' \pi^{p,q}(u)) + \dext' ( \imath_Z \pi^{p,q}(u)).
\end{aligned}
$$

In conclusion, if $Z$ is an element of the involutive structure, we can define the operator $ \mathscr{L}'_Z : \SF{G}{p,q}{} \to \SF{G}{p,q}{}$
by $\mathscr{L}'_Z u = \imath_Z (\dext' u) + \dext' (\imath_Z u). $ Notice that if $u \in \SF{G}{p,q}{}$ is a right-invariant form, then $\mathscr{L}'_Z u = 0$ if $Z$ is a left-invariant vector field.

%%%%%%%%%%%%%%%%%%%%%%%%%%%%%%%%%
%%% LIE' COMMUTES WITH DELTA' %%%
%%%%%%%%%%%%%%%%%%%%%%%%%%%%%%%%%

\subsubsection{Relation between \texorpdfstring{$\Box$}{Box} and \texorpdfstring{$\Delta$}{Laplacian}}

We recall that the Laplace-Beltrami operator is defined by
$ \Delta \doteq \dext \circ \delta + \delta \circ \dext.$

Notice that the box operator $\Box$ associated to the complex \eqref{h_complex} 
 satisfies $\pi^{p,q} \circ \Delta = \Box \circ \pi_{p,q}$ for all elements in $N^{p,q}(G; \frk h)$. Notice that, if $\pi^{p,q}(u) \in \SF{G}{p,q}{}$, then $\Box \circ \pi^{p,q}(u) = \pi^{p,q} (\Delta u)$. Therefore the operator $\Box$ inherits some properties from $\Delta$. The property we need is the following
$$ \mathscr{L}'_Z \Box \pi^{p,q}(u) = \mathscr{L}'_Z \pi^{p,q}(\Delta u) = \pi^{p,q}(\mathscr{L}_Z \Delta u) = \pi^{p,q}(\Delta \mathscr{L}_Z u),$$ which implies the following:
\begin{enumerate}
    \item The operator $\mathscr{L}'_Z$ commutes with $\Box^{p,q}$.
    \item If $\Box u = 0$, then $ \Box \mathscr{L}'_Z u = 0$, $\dext' \mathscr{L}'_Z u = 0$, and $\delta' \mathscr{L}'_Z u = 0.$
    \item If $u \in \ker \Box$, then $\mathscr{L}'_Z u = 0$ for all $Z \in \frk h$.
\end{enumerate}

\begin{prp}
\label{chp:comp_lie:hypo_p_zero} Let $\frk h$ be a hypocomplex structure. If $u \in \mathcal C^{\infty}(G; \Lambda^{p,0})$ is such that  $\mathscr{L}'_Z u = 0$ for all $Z \in \frk h$, then $u$ is left-invariant.
\end{prp}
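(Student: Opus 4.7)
The plan is to convert the hypothesis $\mathscr L'_Z u = 0$ into honest exterior-derivative closure for the unique lift $\tilde u \in N^{p,0}$ of $u$, and then reduce the problem to scalar solutions of the hypocomplex structure $\frk h$ on the compact manifold $G$, which must be constant.

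First, since $N^{p+1,-1} = 0$, the class $u$ has a unique representative $\tilde u \in N^{p,0}$; by the defining property of $N^{p,0}$ one has $\imath_Z \tilde u = 0$ for every $Z \in \frk h$. Cartan's identity for $\dext'$ then reduces the hypothesis to $\imath_Z \dext' u = 0$ for every such $Z$. Expanding $\dext' u \in \Lambda^{p,1}$ in the left-invariant basis $\{\zeta_I \wedge \tau_j\}$ and contracting with each $L_k \in \frk h$ extracts the $\tau_k$-coefficient one at a time, so the hypothesis forces $\dext' u = 0$.

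Second, I would upgrade this to the stronger statement $d\tilde u \in N^{p+1,0}$. The Maurer--Cartan identity together with involutivity of $\frk h$ kills the $(0,2)$-component of each $d\zeta_l$:
\[
d\zeta_l(L_j, L_k) = -\zeta_l([L_j, L_k]) = 0,
\]
because $[L_j, L_k] \in \frk h$ and $\zeta_l$ annihilates $\frk h$. A bidegree count then confines $d\tilde u$ to the pieces of bidegree $(p+1,0)$ and $(p,1)$; the $(p,1)$-piece is precisely $\dext' u = 0$, so $d\tilde u$ is purely of type $(p+1,0)$ and hence lies in $N^{p+1,0}$. Cartan's magic formula for the genuine Lie derivative now yields $\mathscr L_Z \tilde u = \imath_Z(d\tilde u) + d(\imath_Z \tilde u) = 0$ for every $Z \in \frk h$, because both $\tilde u$ and $d\tilde u$ vanish on any argument from $\frk h$.

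Third, I would evaluate $\tilde u$ on $p$-tuples of right-invariant vector fields $V_1, \ldots, V_p$. Since left-invariant and right-invariant vector fields commute, $[Z, V_i] = 0$ for every $Z \in \frk h$, and consequently the scalar function $f = \tilde u(V_1, \ldots, V_p)$ satisfies $Zf = (\mathscr L_Z \tilde u)(V_1, \ldots, V_p) = 0$ for every $Z \in \frk h$. Thus $f$ is a global solution of the hypocomplex structure; on the compact manifold $G$ the maximum modulus principle furnished by the local factorization $f = h \circ Z$ through a holomorphic $h$, combined with the unique continuation property (Proposition~\ref{prp:hypoxstar}), forces $f$ to be constant. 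Since this holds for every choice of right-invariant $V_1, \ldots, V_p$, whose values at each point span the tangent space, the form $\tilde u$ is translation-invariant in the sense claimed.

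The main obstacle is Step 2, the passage from $\dext' u = 0$ to $d\tilde u \in N^{p+1,0}$. This is where involutivity of $\frk h$ is used in a nonobvious way, via the vanishing of the pure $\tau \wedge \tau$-component of $d\zeta_l$; without this refinement one would be stuck with the quotient-level operator $\dext'$ and could not bridge to Cartan's magic formula for the genuine exterior derivative, which is what converts the algebraic hypothesis on $u$ into the scalar solution $f$ to which hypocomplexity applies.
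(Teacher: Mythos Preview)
Your Steps 1 and 2 are correct: the reduction of the hypothesis to $\dext' u=0$, the bidegree argument showing $d\tilde u\in N^{p+1,0}$ via the Maurer--Cartan relation and involutivity, and the resulting identity $\mathscr L_Z\tilde u=0$ for every left-invariant $Z\in\frk h$ are all fine. The gap is in Step 3. Constancy of $f=\tilde u(V_1,\dots,V_p)$ for every tuple of \emph{right}-invariant vector fields says exactly that $\tilde u$ has constant coefficients in the right-invariant coframe, i.e.\ that $\tilde u$ is \emph{right}-invariant. On a non-abelian group this is not the same as left-invariance: a right-invariant form is left-invariant only when its value at $e$ is fixed by $\operatorname{Ad}_G$. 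Your last sentence jumps from ``$f$ constant for all right-invariant $V_i$'' to ``left-invariant,'' and that jump is unjustified. Note also that the flows of left-invariant vector fields are \emph{right} translations, so your Step 2 identity $\mathscr L_Z\tilde u=0$ likewise only yields invariance under certain right translations.

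The paper's argument avoids this trap by never leaving the left-invariant coframe: it writes $u=\sum_I u_I\,\zeta_I$ with left-invariant $\zeta_I$, computes $\mathscr L'_Z$ term by term, and argues that the hypothesis forces $Z u_I=0$ for every $Z\in\frk h$ and every scalar coefficient $u_I$; hypocomplexity on the compact group then makes each $u_I$ constant, which is precisely left-invariance. Your route through right-invariant test fields trades the left-invariant trivialization for the right-invariant one and never comes back. To close your argument you would need the additional fact that a right-invariant section of the \emph{left}-invariant bundle $N^{p,0}$ which is $\dext'$-closed has $\operatorname{Ad}_G$-invariant value at $e$, and that is a separate (and not obvious) statement.
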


\begin{proof}
Let $u \in \mathcal C^{\infty}(G; \Lambda^{p,0})$ and assume that $\mathscr{L}'_Z u = 0$ for all $Z \in \frk h$. We write $ u = \sum_{|I|=p} u_I \zeta_I $, in which $u_I \in \mathcal C^{\infty}(G)$. Notice that
 $\mathscr{L}'_Z \left( \sum_{|I|=p} u_I \zeta_I \right) =  \sum_{|I|=p} \left\{ \mathscr{L}'_Z (u_I) \zeta_I + u_I (\mathscr{L}'_Z \zeta_I) \right\}.$

By definition, we have $\mathscr{L}'_Z \zeta_I = \dext' (\imath_Z \zeta_I) + \imath_Z(\dext' \zeta_I)$. Since $\imath_Z \zeta_I$ is zero, we have $\dext' (\imath_Z \zeta_I) = 0$. Also, there exist constants $a_{Ij} \in \C$ such that $\dext' \zeta_I = \sum_{j=1}^n c_{Ij} \zeta_I \wedge \tau_j.$ Thus, by applying $\imath_Z$ on both sides, we obtain
$$\imath_Z(\dext' \zeta_I) = \imath_Z(\sum_{j=1}^n c_{Ij} \zeta_I \wedge \tau_j) = \sum_{j=1}^n c_{Ij} \imath_Z(\zeta_I \wedge \tau_j) = 0$$
because $\imath_{Z}(\zeta_I \wedge \tau_j) = 0$ when computed in any element of $\frk h$.

Therefore, $\mathscr{L}'_Z (\sum_{|I|=p} u_I \zeta_I) =  \sum_{|I|=p} \mathscr{L}'_Z (u_I) \zeta_I = 0$ and so $\mathscr{L}'_Z u_I = 0$ for all $Z$. Since $\frk h$ is hypocomplex, we conclude that $u_I$ is a constant. Therefore, the form $u$ is left-invariant.
\end{proof}

Notice that the Lie derivative can be extended to work with currents and the same proof yields the following result:

\begin{cor}
\label{cor:hypo_p_zero_in_Eprime_two} Let $\frk h$ be a hypocomplex structure. If $u \in \mathcal D'(G; \Lambda^{p,0})$ is such that  $\mathscr{L}'_Z u = 0$ for all $Z \in \frk h$, then $u$ is left-invariant.
\end{cor}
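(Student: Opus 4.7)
The plan is to transcribe the proof of Proposition \ref{chp:comp_lie:hypo_p_zero} verbatim to the distributional setting, relying on two auxiliary facts: that the operator $\mathscr{L}'_Z$ extends to currents with Cartan's formula still valid, and that hypocomplex structures are hypoelliptic on functions.

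First I would define $\mathscr{L}'_Z$ on $\mathcal D'(G; \Lambda^{p,q})$ by duality against smooth test forms of complementary bidegree. Since both $\imath_Z$ and $\dext'$ extend to currents via the bracket between $\mathcal C^\infty(G;\Lambda^{p,q})$ and $\mathcal E'(G;\Lambda^{m-p,n-q})$ introduced in Section \ref{sec:involutive_structures}, the identity $\mathscr{L}'_Z = \imath_Z \dext' + \dext' \imath_Z$ on smooth forms transfers automatically by continuity to currents.

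Next, writing $u = \sum_{|I|=p} u_I \zeta_I$ with $u_I \in \mathcal D'(G)$, I would repeat the purely algebraic computation from the proof of Proposition \ref{chp:comp_lie:hypo_p_zero}: because $\imath_Z \zeta_I = 0$ and $\dext' \zeta_I$ is a linear combination of wedges of the form $\zeta_J \wedge \tau_j$ on which $\imath_Z$ again vanishes, one has $\mathscr{L}'_Z \zeta_I = 0$. Consequently $0 = \mathscr{L}'_Z u = \sum_I \mathscr{L}'_Z(u_I)\, \zeta_I$, and linear independence of the $\zeta_I$ forces $\mathscr{L}'_Z u_I = 0$. For a scalar distribution $u_I$, Cartan's formula degenerates to $\mathscr{L}'_Z u_I = \imath_Z \dext' u_I = Z u_I$, so each $u_I$ is a distributional solution of $\frk h$.

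The decisive step is to conclude that each $u_I$ is constant. Here I would invoke the hypoellipticity of hypocomplex structures: any distributional solution of $\frk h$ is locally of the form $h \circ Z$ for some holomorphic $h$, and therefore smooth. Once $u_I \in \mathcal C^\infty(G)$ with $Z u_I = 0$ for every $Z \in \frk h$, the argument used in Proposition \ref{chp:comp_lie:hypo_p_zero} applies: the maximum principle for holomorphic functions shows that $u_I$ is locally constant near any point of $G$ where $|u_I|$ attains its maximum, and the unique continuation property of Proposition \ref{prp:hypoxstar} then propagates this to a global constant on each connected component. Hence $u = \sum u_I \zeta_I$ is left-invariant. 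The main obstacle is justifying the distributional hypoellipticity of hypocomplex structures, which is not spelled out in the excerpt but follows from the surjectivity of the ring homomorphism $\lambda : h \mapsto h \circ Z$ once it is extended to germs of distributional solutions.
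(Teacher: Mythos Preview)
Your proposal is correct and follows exactly the route the paper intends: the paper's own proof of the corollary is the single sentence ``the Lie derivative can be extended to work with currents and the same proof yields the following result,'' and your proposal is simply a careful unpacking of what ``the same proof'' means in the distributional setting. In particular, the decomposition $u=\sum u_I\zeta_I$, the vanishing of $\mathscr{L}'_Z\zeta_I$, and the conclusion $Zu_I=0$ are identical to Proposition~\ref{chp:comp_lie:hypo_p_zero}; the one genuinely new ingredient you correctly isolate is the passage from a distributional solution of $\frk h$ to a smooth one, which the paper leaves implicit but which indeed requires knowing that hypocomplexity (in the sense used here, cf.\ \cite{treves1992hypo}) forces distributional solutions to be locally of the form $h\circ Z$ and hence smooth.
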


Notice that the results above implies Theorem \ref{int:comp_lie:hypo_subelliptic}.

% \begin{thm}
% \label{thm:comp_lie:hypo_subelliptic}
%  Let $\frk h$ be an hypocomplex structure on a compact Lie group $G$. Then every cohomology class $u \in H^{p,0}_{\mathcal C^{\infty}}(G; \frk h)$ has a representative that is left-invariant. That is,
%  $ H^{p,0}_{\mathcal C^{\infty}}(G; \frk h) = H^{p,0}_{L}(G; \frk h).$
% \end{thm}

\begin{prp}
\label{prp:comp_lie:hypo_subelliptic_torus}
 Let $\frk h$ be a left-invariant elliptic structure on a torus $\mathbb{T}$. If $u \in \SF{\T}{p,q}{}$ is such that $\mathscr{L}'_Z u = 0$ for all $Z \in \frk h$, then $u$ is left-invariant.
\end{prp}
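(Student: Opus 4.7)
My plan is to exploit the abelian structure of $\T$ to reduce the question to a Fourier computation on the coefficients of a representative. Fix a basis $L_1,\dots,L_n$ of $\frk h$, extend it to a basis $L_1,\dots,L_n,M_1,\dots,M_m$ of $\frk g$, and let $\tau_\bullet,\zeta_\bullet$ be the dual basis used in the previous subsection. Any $u\in\SF{\T}{p,q}{}$ is represented by a form
$$\tilde u = \sum_{|I|=p,\,|J|=q} u_{IJ}\,\zeta_I \wedge \tau_J \in N^{p,q}(\T;\frk h), \qquad u_{IJ}\in \mathcal C^\infty(\T),$$
and it is enough to show that each $u_{IJ}$ is constant, for then $\tilde u$ itself is left-invariant.

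The crucial torus-specific simplification is that every left-invariant form on $\T$ is bi-invariant, and hence annihilated by the Lie derivative along any left-invariant vector field. Therefore, for $Z\in\frk h$ one obtains
$$\mathscr L_Z \tilde u = \sum_{|I|=p,\,|J|=q} Z(u_{IJ})\,\zeta_I \wedge \tau_J.$$
By the identity $\mathscr L'_Z \circ \pi^{p,q} = \pi^{p,q} \circ \mathscr L_Z$ recalled in the previous subsection, the hypothesis $\mathscr L'_Z u = 0$ becomes $\mathscr L_Z \tilde u \in N^{p+1,q-1}(\T;\frk h)$. Since the displayed sum involves only pure $(p,q)$-type monomials, while $N^{p+1,q-1}$ is spanned by monomials with at least $p+1$ factors $\zeta_i$, this forces $Z(u_{IJ}) = 0$ for every $I,J$ and every $Z \in \frk h$.

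To finish I would expand each $u_{IJ}$ in a Fourier series $\sum_\xi \hat u_{IJ}(\xi)\,e^{i\xi\cdot x}$. Writing a left-invariant $Z\in\frk g$ as $\sum_l a_l\,\partial_{x_l}$ with $a_l\in\C$, the relation $Z u_{IJ}=0$ translates into $(\sum_l a_l \xi_l)\,\hat u_{IJ}(\xi)=0$ for every frequency $\xi$. The map $Z \mapsto \sum_l a_l \xi_l$ is $\C$-linear and, since $\xi$ is real, it annihilates $\frk h$ iff it annihilates $\overline{\frk h}$; hence, by ellipticity, iff it annihilates all of $\frk h+\overline{\frk h}=\frk g$, which forces $\xi=0$. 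Thus $\hat u_{IJ}(\xi)=0$ for every $\xi\neq 0$, each $u_{IJ}$ is constant, and both $\tilde u$ and $u$ are left-invariant.

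The main subtlety is the passage from $\mathscr L'_Z u = 0$, which is only an equation modulo $N^{p+1,q-1}$, to the pointwise equation $Z u_{IJ} = 0$ on coefficients in every bidegree. This is precisely where the torus hypothesis is decisive: the hypocomplexity argument of Proposition~\ref{chp:comp_lie:hypo_p_zero} only reaches bidegree $(p,0)$ because on a general compact group $\mathscr L'_Z \zeta_I$ is a priori nonzero for $|I|>0$, while the vanishing of all Lie brackets on $\frk g$ in the toral case kills those contributions in every bidegree and lets the argument close.
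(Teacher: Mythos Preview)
Your argument is correct and follows essentially the same route as the paper: write a representative $\sum_{I,J} u_{IJ}\,\zeta_I\wedge\tau_J$, use that on the torus every left-invariant form is bi-invariant so that $\mathscr L_Z$ hits only the coefficients, deduce $Z u_{IJ}=0$ for all $Z\in\frk h$, and conclude that each $u_{IJ}$ is constant. The only difference is cosmetic: the paper dispatches the last step with the one-line remark ``since $\frk h$ is elliptic, $u_{IJ}\in\C$'' (i.e.\ global solutions of an elliptic structure on a compact connected manifold are constant), whereas you give a direct Fourier proof exploiting that $\xi$ is real, so that the symbol vanishes on $\frk h$ iff it vanishes on $\overline{\frk h}$ and hence on all of $\frk g$. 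Your version is a bit more explicit and self-contained; the paper's is shorter but relies on the reader knowing the standard fact about elliptic structures.
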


\begin{proof}
 If $u \in \SF{\T}{p,q}{}$ is such that $\mathscr L'_Z u = 0$ for all $Z \in \frk h$, by writing $ u = \sum_{|I|=p} \sum_{|J|=q} u_{IJ} \zeta_I \wedge \tau_J,$
 we have that $$ \mathscr L'_Z u = \sum_{|I|=p} \sum_{|J|=q} \mathscr L'_Z ( u_{IJ} ) \zeta_I \wedge \tau_J = 0. $$
 Thus, $\mathscr L'_Z (u_{IJ}) = 0$. Since $\frk h$ is elliptic, we have that $u_{IJ} \in \C$ and the theorem is proved.
\end{proof}

Now we prove Theorem \ref{int:comp_lie:hypo_subelliptic_torus_2}.

\begin{proof}[Proof of \ref{int:comp_lie:hypo_subelliptic_torus_2}]
 Let $[u] \in H^{p,q}_{\mathcal C^\infty}(\T; \frk h)$. Since $\Box$ elliptic, we can assume $\Box u = 0$ and $\mathscr L'_Z u = 0$ for all $Z \in \frk h$, since $\frk h$ is actually bi-invariant, it follows that $u$ is left-invariant. The proof is concluded.
\end{proof}

% \todo[inline]{Is there a hypocomplex structure on a torus that is not elliptic?}

\subsection{Application of Serre duality}

For this application, we need to extend the averaging operator acting on forms to an operator acting on currents, which can be done by transposition. Since the averaging operator is formally self-adjoint, as we will prove in Lemma \ref{lem:formally_self-adjoint}, we can easily see that this extension is well-defined.

Te operator $A$ is formally self-adjoint. We prove this fact in the following lemma:

\begin{lem}
\label{lem:formally_self-adjoint}
 The operator $A : \SF{G}{p}{} \to \SF{G}{p}{}$ defined in Lemma \ref{averaging_operator_lemma} is formally self-adjoint.
\end{lem}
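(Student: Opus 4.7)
The plan is to show $(Au, v) = (u, Av)$ for arbitrary $u, v \in \SF{G}{p}{}$ by unwinding the definition of $(\cdot,\cdot)$ and $A$, applying Fubini, and exploiting two invariance properties of the set-up: the left-invariance of the fiberwise Hermitian inner product on $\Lambda^p T^*G$ (built from a left-invariant frame $w_1,\dots,w_N$), and the left-invariance of the Haar measure $\mu$. A third, briefer ingredient is that $G$ is compact hence unimodular, so $\mu$ is also right-invariant and invariant under inversion.

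First I would expand
$$
(Au,v) \;=\; \int_G (Au)_y \cdot v_y \,\dint\mu(y) \;=\; \int_G\!\!\int_G (L_x^* u)_y \cdot v_y \,\dint\mu(x)\,\dint\mu(y),
$$
using Fubini (legitimate by smoothness and compactness of $G$). The key pointwise identity is that left-invariance of the metric means $(L_x)_*:T_y G \to T_{xy}G$ is a unitary isomorphism, hence for any $\alpha,\beta \in T^*_{xy}G$ one has $(L_x^*\alpha)\cdot_y (L_x^*\beta) = \alpha\cdot_{xy}\beta$. Writing $v = L_x^*(L_{x^{-1}}^* v)$ and applying this identity, I get
$$
(L_x^* u)_y \cdot v_y \;=\; u_{xy} \cdot (L_{x^{-1}}^* v)_{xy}.
$$

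Next I would integrate in $y$ and perform the substitution $z = xy$. Because $\mu$ is left-invariant, $\dint\mu(y) = \dint\mu(z)$, which turns the inner $y$-integral into $(u, L_{x^{-1}}^* v)$. Substituting this back gives
$$
(Au,v) \;=\; \int_G (u, L_{x^{-1}}^* v)\,\dint\mu(x) \;=\; \Bigl(u,\, \int_G L_{x^{-1}}^* v\,\dint\mu(x)\Bigr),
$$
by Fubini and linearity. Finally, the substitution $x\mapsto x^{-1}$ preserves Haar measure on the compact (hence unimodular) group $G$, so $\int_G L_{x^{-1}}^* v\,\dint\mu(x) = \int_G L_x^* v\,\dint\mu(x) = Av$, and we conclude $(Au,v) = (u, Av)$.

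The only delicate point is bookkeeping the base points and the direction of pullbacks in the identity $(L_x^*\alpha)\cdot_y(L_x^*\beta) = \alpha\cdot_{xy}\beta$; once that is set up cleanly, the rest is two applications of Fubini and two applications of the translation-invariance of $\mu$. No analytic subtlety arises since everything lives on the compact group $G$ and all integrands are smooth.
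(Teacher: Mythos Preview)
Your argument is correct and follows essentially the same route as the paper's proof: both exploit Fubini together with the left- (and hence bi-) invariance of the Haar measure and the left-invariance of the fiberwise Hermitian product. The only cosmetic difference is that the paper expands $u$ and $v$ in the left-invariant coframe $\{w_I\}$ and reduces everything to the scalar identity $\int_G\!\int_G u_I(g)\,\overline{v_J(xg)}\,d\mu(x)\,d\mu(g)=\int_G\!\int_G u_I(xg)\,\overline{v_J(x)}\,d\mu(x)\,d\mu(g)$, whereas you carry out the same computation coordinate-free via the unitarity of $(L_x)^*$ on fibers; your final inversion substitution $x\mapsto x^{-1}$ plays the same role as the (implicit) right-invariance the paper uses when computing $(Av)_g$.
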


\begin{proof}
Let $u, v \in \SF{G}{p}{}$ and write
$ u = \sum_{|I|=p} u_I w_I,~ v = \sum_{|J|=p} v_J w_J.$

Notice that, for $X_1, \ldots, X_p \in \frk g$, with a simple computation we obtain
$$
\begin{aligned}
	(Av)_g(X_1, \ldots, X_p)
		%& = \int_G ((L_x)^* v)_g(X_1, \ldots, X_p) \dint \mu(x) \\
		%& = \int_G (L_x)^* \left(\sum_{|J|=p} v_J w_J\right)(X_1, \ldots, X_p) \dint \mu(x) \\
		%& = \sum_{|J|=p} \int_G ((L_x)^* v_J)(g) \wedge (L_x)^*(w_J(X_1, \ldots, X_p)) \dint \mu(x) \\
		%& = \sum_{|J|=p} \left(\int_G v_J \circ L_x(g) \dint \mu(x) \right) w_J(X_1, \ldots, X_p)  \\
		& = \sum_{|J|=p} \left(\int_G v_J \dint \mu(x) \right) w_J(X_1, \ldots, X_p).  \\
\end{aligned}
$$

Thus, it holds that
$$\begin{aligned}
	(u,Av)	& = \sum_{|I|=p}\sum_{|J|=p} \int_G u_I(g) \left(\int_G \overline{ v_J \circ L_x(g)} \dint \mu(x) \right) w_I \cdot w_J \dint \mu(g) \\
		& = \sum_{|I|=p}\sum_{|J|=p} \left( \int_G \int_G u_I(g) \overline{ v_J \circ L_x(g)} \dint \mu(x)  \dint \mu(g) \right) w_I \cdot w_J \\
		& = \sum_{|I|=p}\sum_{|J|=p} \left( \int_G \int_G u_I \circ L_x(g) \overline{ v_J (x)} \dint \mu(x)  \dint \mu(g) \right) w_I \cdot w_J \\
		& = (Au,v).
\end{aligned}$$
\end{proof}

The following lemma is going to be useful:

\begin{lem}
 Let $u \in \mathcal D'(G; \Lambda^k)$ and write $\Sigma_{|I| = k} u_I w_I$. Then $Au = \Sigma_{|I| = k} (Au_I) w_I.$
\end{lem}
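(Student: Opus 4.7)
The plan is to reduce the claim to the smooth case using the left-invariance of the reference coframe $w_1, \ldots, w_N$, and then to pass to currents by the duality extension enabled by Lemma \ref{lem:formally_self-adjoint}.

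First, I would establish the statement when $u$ is a smooth $k$-form. Write $u = \sum_{|I|=k} u_I w_I$ with $u_I \in \mathcal C^\infty(G)$. Since each $w_I = w_{i_1} \wedge \cdots \wedge w_{i_k}$ is built from left-invariant 1-forms, we have $L_x^* w_I = w_I$ for every $x \in G$, so that $L_x^*(u_I w_I) = (u_I \circ L_x) w_I$. Integrating against Haar measure and interchanging the sum and integral,
\[
Au \;=\; \sum_{|I|=k} \left( \int_G u_I \circ L_x \, \dint\mu(x) \right) w_I \;=\; \sum_{|I|=k} (A u_I)\, w_I.
\]
This settles the smooth case and motivates the general statement.

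Next, I would recall that the extension of $A$ to $\mathcal D'(G;\Lambda^k)$ is defined by transposition: for $u \in \mathcal D'(G;\Lambda^k)$ and a test form $\phi \in \mathcal C^\infty(G;\Lambda^k)$, set $\langle Au, \phi\rangle = \langle u, A\phi \rangle$. This is legitimate because Lemma \ref{lem:formally_self-adjoint} shows that $A$ is formally self-adjoint on forms; in particular, specializing to $0$-forms, $A$ on $\mathcal D'(G)$ is defined by $\langle Af, g\rangle = \langle f, Ag \rangle$ for every $g \in \mathcal C^\infty(G)$, which is well-defined since $Ag$ is smooth.

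Now, given $u = \sum_{|I|=k} u_I w_I$ with $u_I \in \mathcal D'(G)$, test against $\phi = \sum_{|J|=k} \phi_J w_J$ with $\phi_J \in \mathcal C^\infty(G)$. Using the smooth case applied to $\phi$,
\[
\langle Au, \phi \rangle
= \langle u, A\phi \rangle
= \sum_{I,J} (w_I \cdot w_J)\, \langle u_I,\, A\phi_J \rangle
= \sum_{I,J} (w_I \cdot w_J)\, \langle A u_I,\, \phi_J \rangle
= \left\langle \sum_{|I|=k} (Au_I)\, w_I,\, \phi \right\rangle.
\]
Since this holds for every test form $\phi$, we conclude $Au = \sum_{|I|=k} (Au_I)\, w_I$ as currents. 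The only real subtlety is the bookkeeping in the second step of the display—making sure the pairing between currents and forms decomposes diagonally in the fixed left-invariant basis $\{w_I\}$—but this follows immediately from the very definition of the inner product on $\Lambda^k \frk g^*$ used throughout Section 3.3.
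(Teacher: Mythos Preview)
Your argument is correct and mirrors the paper's: extend $A$ to currents by transposition, apply the smooth-case formula to the test form, and unfold the pairing coefficient by coefficient. The only cosmetic difference is that the paper tests against $\phi \in \mathcal C^{\infty}(G;\Lambda^{N-k})$ via the wedge pairing $w_I \wedge w_{I'}$ rather than against same-degree forms via $w_I \cdot w_J$, but since the basis $\{w_I\}$ is orthonormal the two computations are equivalent.
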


\begin{proof}
 Let $\phi \in \mathcal C^{\infty}(G; \Lambda^{N-k})$ and write $\phi = \Sigma_{|I'| = N-k} \phi_{I'} w_{I'}$. By definition, we have
$$
\begin{aligned}
	(Au)(\phi) & = u(A \phi) \\
    		% & = u(\Sigma_{|I'| = N-k} (A\phi_{I'}) w_{I'}) \\
            & = (\Sigma_{|I| = k} u_I w_I)(\Sigma_{|I'| = N-k} (A\phi_{I'}) w_{I'}) \\
            % & = \Sigma_{|I| = k} \Sigma_{|I'| = N-k} u_I(A\phi_{I'}) w_I \wedge w_{I'} \\
            & = \Sigma_{|I| = k} \Sigma_{|I'| = N-k} (Au_I)(\phi_{I'}) w_I \wedge w_{I'} \\
            & = (\Sigma_{|I| = k} (Au_I) w_I)(\phi).
\end{aligned}
$$
Thus, the proof is completed.
\end{proof}

% \begin{lem}
%  Let $u \in \mathcal D'(G; \Lambda^k)$ and suppose that $Au - u = 0$. Then $u$ is a smooth left-invariant form.
% \end{lem}

% \begin{proof}
%  Let $u \in \mathcal D'(G; \Lambda^k)$ and write $u = \Sigma_{|I| = k} u_I w_I$. We note that, if $0 = Au - u$, then $Au_I - u_I = 0$, which means that each $u_I$ is a constant.
% \end{proof}

\begin{lem}
 Let $u \in \mathcal D'(G; \Lambda^k)$ and suppose that $Au - u = 0$. Then $u$ is a real-analytic left-invariant form.
\end{lem}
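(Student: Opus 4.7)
The plan is to reduce this to the scalar case using the left-invariant frame and the previous lemma. Write $u = \sum_{|I|=k} u_I w_I$ where the $w_I$ are the left-invariant $k$-forms induced by the chosen basis of $\frk g^*$. By the lemma just established, $Au = \sum_{|I|=k} (A u_I) w_I$, and since the $w_I$ are linearly independent at every point, the equation $Au = u$ is equivalent to $A u_I = u_I$ as distributions on $G$ for every multi-index $I$.

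Next I would compute $Af$ for an arbitrary $f\in\mathcal D'(G)$. Duality gives $(Af)(\varphi) = f(A\varphi)$ for every test function $\varphi$, and for smooth $\varphi$ the function $A\varphi$ is constant: using the left-invariance of the Haar measure $\mu$,
\begin{equation*}
(A\varphi)(g) = \int_G \varphi(xg)\,\dint \mu(x) = \int_G \varphi(y)\,\dint \mu(y),
\end{equation*}
independent of $g$. Hence $A\varphi = c_\varphi\cdot \mathbf 1$ where $c_\varphi = \int_G \varphi\,\dint\mu$, and therefore $Af$ is, as a distribution, the constant $\langle f, \mathbf 1\rangle$ times $\mathbf 1$.

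Applied to each $u_I$, the identity $A u_I = u_I$ forces $u_I$ to agree with a constant $c_I\in\C$. Substituting back, $u = \sum_{|I|=k} c_I w_I$, a constant-coefficient linear combination of the left-invariant forms $w_I$. Such a form is manifestly left-invariant; it is also real-analytic because the $w_I$ are left-invariant forms on a real-analytic Lie group and are therefore themselves real-analytic (their coefficients in any local analytic chart are determined by the analytic group law and the constant values at the identity).

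The only subtle step is the justification of $A\varphi$ being constant at the level of smooth test forms, which is a direct change-of-variables in the Haar integral; no analytic machinery is required. Thus this lemma is essentially a bookkeeping consequence of the previous lemma together with left-invariance of $\mu$.
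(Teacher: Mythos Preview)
Your proof is correct and follows the same approach as the paper: decompose $u=\sum_{|I|=k} u_I\,w_I$ in the left-invariant frame, use the previous lemma to reduce $Au=u$ to $Au_I=u_I$ for each coefficient, and conclude that each $u_I$ is constant. The paper's proof is a single terse sentence asserting this last implication without justification; your explicit duality computation of $A\varphi$ fills that gap (one minor point: the identity $\int_G \varphi(xg)\,d\mu(x)=\int_G\varphi\,d\mu$ actually uses \emph{right}-invariance of $\mu$, equivalently bi-invariance since $G$ is compact, rather than left-invariance as you wrote).
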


\begin{proof}
 Let $u \in \mathcal D'(G; \Lambda^k)$ and write $u = \Sigma_{|I| = k} u_I w_I$. We note that, if $0 = Au - u$, then $Au_I - u_I = 0$, which means that each $u_I$ is a constant.
\end{proof}

Now we have everything we need to prove Theorem \ref{int:li:thm:duality_of_left}.

% \begin{thm}
% \label{thm:li:thm:duality_of_left} Let $\frk h$ be any left-invariant involutive structure such that the operators
%   \begin{equation}
%     \label{eq:two_operators_closed_range}
%     \mathcal C^{\infty}(G; \Lambda^{p,q-1}) \xrightarrow{\dext'} \mathcal C^{\infty}(G; \Lambda^{p,q}) \xrightarrow{\dext'} \mathcal C^{\infty}(G; \Lambda^{p,q+1})
%     \end{equation}
%  have closed range. Then, it holds that every cohomology class in degree $(p,q)$ has a left-invariant representative if, and only if, every cohomology class in degree
% $(m-p, n-q)$ also has a left-invariant representative.
% \end{thm}

\begin{proof}[Proof of Theorem \ref{int:li:thm:duality_of_left}]
Since the operators \eqref{eq:two_operators_closed_range} have closed range, by Serre duality, we have $ H^{p,q}_{\mathcal C^{\infty}}(G; \frk h)^* \cong H^{m-p,n-q}_{\mathcal D'}(G; \frk h).$
Let $[u] \in H^{m-p,n-q}_{\mathcal D'}(G; \frk h)$. For every $[v] \in H^{p,q}_{\mathcal C^{\infty}}(G; \frk h)$, we have
$$ (Au - u)(v) = Au(v) - u(v) = u(Av) - u(v) = u(Av - v).$$

Suppose that every class in $H^{p,q}_{\mathcal C^{\infty}}(G; \frk h)$ has a representative that is left-invariant. Thus, we can assume that $v$ is left-invariant, that is, $Av - v = 0$. Then, $[Au - u] = 0$ in $H^{m-p,n-q}_{\mathcal D'}(G; \frk h)$.
The other direction of the equivalence follows by applying the exact same argument.
\end{proof}

% \begin{rmk}
% \label{sec:li:rmk:const} Notice that, since $[Av] = [v]$ in $H^{m-p,n-q}_{\mathcal D'}(G; \frk h)$ and $Av$ is left-invariant, we actually have $$H^{m-p,n-q}_{\mathcal D'}(G; \frk h) \cong  H^{m-p,n-q}_{\mathcal C^\infty}(G; \frk h) \cong H^{m-p,n-q}_{L}(G; \frk h).$$
% \end{rmk}

\begin{rmk}
\label{chp:li:rmk:const} Notice that, since $[Av] = [v]$ in $H^{m-p,n-q}_{\mathcal D'}(G; \frk h)$ and $Av$ is left-invariant, we actually have $$H^{m-p,n-q}_{\mathcal D'}(G; \frk h) \cong  H^{m-p,n-q}_{\mathcal C^{\infty}}(G; \frk h) \cong H^{m-p,n-q}_{L}(G; \frk h).$$
\end{rmk}

% \begin{thm}
% \label{thm:left_invariance_on_degree} Let $\frk h$ be a left-invariant elliptic structure on a compact Lie group $G$. %Suppose that $$\mathcal C^\infty(G; \Lambda^{p,n-1}) \xrightarrow{\dext'} \mathcal C^\infty(G; \Lambda^{p,n})$$ has closed range.
% Then we have $$H^{m-p,n}_{\mathcal C^\infty}(G; \frk h) = H^{m-p,n}_{L}(G; \frk h).$$
% \end{thm}

% \begin{proof}
% If $[u] \in H^{p,0}_{\mathcal{E}'}(G; \frk h)$, then clearly $\Box u = 0$, which means that $\mathscr L'_Z u = 0$ for all $Z \in \frk h$. Therefore, $u$ is left-invariant by Corollary \ref{cor:hypo_p_zero_in_Eprime}. By combining this with Theorem \ref{thm:li:thm:duality_of_left} and Remark \ref{sec:li:rmk:const}, we have left-invariance of cohomologies in top degree, that is,
% $$H^{p,n}_{\mathcal C^\infty}(G;\frk h) = H^{m-p,n}_{L}(G; \frk h).$$
% \end{proof}

Now we prove Theorem \ref{int:left_invariance_on_degree}.

% \begin{thm}
% \label{thm:left_invariance_on_degree} Let $\frk h$ be a left-invariant hypocomplex structure on a compact Lie group $G$. Suppose that  $\dext' : \mathcal C^{\infty}(G; \Lambda^{p,n-1}) \to \mathcal C^{\infty}(G; \Lambda^{p,n})$ has closed range. Then we have $H^{m-p,n}_{\mathcal C^{\infty}}(G; \frk h) = H^{m-p,n}_{L}(G; \frk h).$
% \end{thm}

\begin{proof}[Proof of Theorem \ref{int:left_invariance_on_degree}]
If $[u] \in H^{p,0}_{\mathcal{D}'}(G; \frk h)$, then clearly $\Box u = 0$, which means that $\mathscr L'_Z u = 0$ for all $Z \in \frk h$. Therefore, $u$ is left-invariant by Corollary \ref{cor:hypo_p_zero_in_Eprime_two}. By combining this with Theorem \ref{int:li:thm:duality_of_left} and Remark \ref{chp:li:rmk:const}, we have left-invariance of cohomologies in top degree, that is,
$H^{p,n}_{\mathcal C^{\infty}}(G;\frk h) = H^{m-p,n}_{L}(G; \frk h).$\end{proof}

\begin{rmk}
\label{rmk:adele_labels_everything}
 The operator $\dext' : \mathcal C^{\infty}(G; \Lambda^{p,q}) \xrightarrow{} \mathcal C^{\infty}(G; \Lambda^{p,q+1})$ is known to have closed range in many situations. For example, it always has closed range in the case of elliptic structures. For hypocomplex structures, the situation is more delicate, but in \cite{cordaro2019top} we found some sufficient conditions to ensure that $\dext' : \mathcal C^{\infty}(G; \Lambda^{p,q}) \xrightarrow{} \mathcal C^{\infty}(G; \Lambda^{p,q+1})$ has closed range. Notice that, due to the left-invariance of the structures, we only need to verify the hypothesis in the origin. Also in \cite{cordaro2019top}, it is shown that the operator $\dext' : \mathcal B(G; \Lambda^{p,n-1}) \xrightarrow{} \mathcal B(G; \Lambda^{p,n})$
 %, in which $\mathcal B(G; \Lambda^{p,q})$ denotes the set of all sections having hyperfunctions as coefficients,
 always has closed range.
\end{rmk}

For hypocomplex structures, the operator $\dext' : \mathcal B(G; \Lambda^{p,n-1}) \xrightarrow{} \mathcal B(G; \Lambda^{p,n})$ always has sequentially closed range and so the operator $\dext' : \mathcal C^\omega(G; \Lambda^{p,0}) \xrightarrow{} \mathcal C^\omega(G; \Lambda^{p,1})$ has closed range. Therefore, by a direct adaptation of Theorem \ref{int:comp_lie:hypo_subelliptic}, it holds that $H^{p,0}_{\mathcal C^{\omega}}(G;\frk h) = H^{p,0}_{L}(G; \frk h)$ and this implies that $H^{p,n}_{\mathcal B}(G;\frk h) = H^{p,n}_{L}(G; \frk h)$. This result can be extended to other classes of smooth functions. We briefly introduce them in the following.

Let $\mathcal M$ denote a Gevrey or Denjoy-Carleman class. We refer to \cite{fujita1995gevrey} and \cite{dasgupta2016eigenfunction} for an introduction to this setting in compact manifolds and on compact Lie groups. We denote by $\mathcal C^{\mathcal M}(G)$ the space of smooth function belonging to the class $\mathcal M$. This space can be endowed with a topology of ({\bf{DFS}}) spaces. The topological dual of $\mathcal  C^{\mathcal M}(G)$ is denoted by $\mathcal D'_{\mathcal M}(G)$ and, obviously, has a topology of a ({\bf{FS}}) space. When $\mathcal M$ is the class of real-analytic functions, we denote it by $\mathcal C^\omega(G)$ with its dual denoted by $\mathcal B(G)$.

Since $\mathcal D'_{\mathcal M}(G; \Lambda^{p,q}) \subset \mathcal B(G; \Lambda^{p,q})$, we have a natural homomorphism $ H^{p,n}_{\mathcal D'_{\mathcal M}}(G;\frk h)  \to H^{p,n}_{\mathcal B}(G;\frk h) $. This homomorphism is injective. In fact, let $[u] \in H^{p,n}_{\mathcal D'_{\mathcal M}}(G;\frk h)$ and suppose that $[u] = 0$ in $H^{p,n}_{\mathcal B}(G;\frk h)$. Since $H^{p,n}_{\mathcal B}(G;\frk h) = H^{p,n}_{L}(G; \frk h)$, there exists a left-invariant $v \in \mathcal B(G; \Lambda^{p,n})$ such that $u - v = \dext' w$ for some $w \in \mathcal B(G; \Lambda^{p,n})$ and since $v$ is left-invariant, we obtain $v \in \mathcal D'_{\mathcal M}(G; \Lambda^{p,q})$.

Notice that $A(u - v) = Au - v = \dext' (Aw)$ with $Aw$ being a section with real-analytic coefficients. Therefore $Aw \in \mathcal D'_{\mathcal M}(G; \Lambda^{p,q-1})$ and so $[u] = [v] = [Au]$ in $H^{p,n}_{\mathcal D'_{\mathcal M}}(G;\frk h)$. Now, $[v] = 0$ means that there exists $w \in \mathcal B(G; \Lambda^{p,n-1})$ such that $\dext' w = v$. Since $v$ is left-invariant, we obtain $v = Av = A (\dext' w) = \dext'(Aw)$ with $Aw \in \mathcal D'_{\mathcal M}(G; \Lambda^{p,n-1})$. Therefore $[u] = 0$ in $H^{p,n}_{\mathcal D'_{\mathcal M}}(G;\frk h)$.

Now, $\dim H^{p,n}_{\mathcal D'_{\mathcal M}}(G;\frk h) < \infty$ and so the codimension of
 $\dext' : \mathcal D'_{\mathcal M}(G; \Lambda^{p,n-1}) \xrightarrow{} \mathcal D'_{\mathcal M}(G; \Lambda^{p,n})$ is finite and we conclude that this map has closed range. We proved Theorem \ref{int:awesome}:
 
\section{Involutive structures on homogeneous manifolds}
\label{sec:homogeneous}
In this section, we introduce the concept of invariant involutive structures on homogeneous manifolds and show how to construct some examples. The main reason for introducing this concept is to simplify the study left-invariant involutive structures on Lie groups.

The main idea we want to explore is the following: let $G$ be a compact Lie group endowed with an elliptic involutive structure $\frk h$. Also, let $\frk k_\R = \frk h \cap \frk g_\R$ and consider the group $K = \exp(\frk k_\R)$. We assume that $K$ is closed, so we can define a homogeneous manifold $\Omega = G / K$. This homogeneous manifolds inherits a complex involutive structure from $G$ via the quotient map.

With all these ingredients in hand, we want to see how much we can infer about the cohomology spaces $H^{p,q}(G;\frk h)$ from the de Rham cohomology of $K$ and from the Dolbeault cohomology of $\Omega$. This is done via a spectral sequence. In order to use this spectral sequence, we need some further assumptions. One of the assumptions is that the orbits of the elliptic structure $\frk h$ is closed and the other one is that $\frk h$ can be decomposed into a direct sum of two Lie subalgebras: one is the algebra $\frk k = \frk h \cap \frk g$ and the other is an ideal in $\frk h$. We show that this tecnique can always be applied in a situation similar to a famous conjecture by Treves.

To conclude, we introduce the concept of Lie algebra cohomology and use it to state a theorem by Bott, which, under certain assumptions, gives us algebraic information about the Dolbeault cohomology of $\Omega$. Under certain algebraic and topological restrictions on $G$, Bott's theorem, along with the spectral sequences, give us a complete algebraic description of $H^{p,q}(G;\frk h)$.

% This restriction makes sense because if we want to integrate a Lie algebra into a group and we want this group to be uniquely determined and compact the Lie algebra has to be semisimple.

\subsection{Homogeneous manifolds and invariant involutive structures}

Let $\Omega$ be a homogeneous manifold for a Lie group $G$. We recall that $\Omega$ is a smooth manifold and that there exists a smooth map $T : G \times \Omega \to \Omega,$ called \emph{left-action} of $G$ on $\Omega$.
Other notations for $T$ are $T(g, x) = T_g x = g \cdot x$.
The map $T$ satisfies the following properties:
$g \cdot (g' \cdot x) = (gg') \cdot x \text{ and } e \cdot x = x,$ in which $e \in G$ is the identity of $G$ and $T$ is assumed to be \emph{transitive}, that is, for any two points $x,y \in \Omega$, there is an element $g$ such that $g \cdot x = y$.

\begin{exa} Any Lie group acts transitively on itself by left multiplication.
\end{exa}

\begin{exa}
If $G$ is any Lie group and $H$ is a closed Lie subgroup, the space $\Omega = G / H$ is a homogeneous manifold with $G$ acting on $\Omega$ by
$g \cdot (g' H) = (g g') \cdot H.$

By Theorem 7.19 on \cite{lee2003smooth}, we know that all examples of homogeneous manifolds are equivalent to this example and, thus, we have that $\Omega$ is an analytic manifold and the map $T$ is an analytic map.
\end{exa}

From now on, $\Omega$ is always assumed to be a connected $G$-homogeneous manifold.

We want to use involutive structures on homogeneous manifolds. In order to take into consideration the symmetries given by the group action, we need to consider involutive structures that carry information about the action. We then introduce the following concept:

\begin{dfn}
 An involutive structure $\mathcal V \subset \C T\Omega$ is said to be \emph{invariant by the action of $G$} if $(T_g)_*X \in \mathcal V_{g \cdot x}$ for all $X \in  \mathcal V_x.$
\end{dfn}

The most obvious example of an invariant structure is $\mathcal{V} = \C T \Omega$. Next, we construct some examples of invariant involutive structures.

%%%%%%%%%%%%%%%%%%%%%%%%%%%%%%%%%%%%%%%%%%%%%%%%%%%%%
%%% INVARIANT INVOLUTIVE STRUCTURES ON LIE GROUPS %%%
%%%%%%%%%%%%%%%%%%%%%%%%%%%%%%%%%%%%%%%%%%%%%%%%%%%%%

Let $G$ be a Lie group and denote its Lie algebra by $\frk g_{\R}$. We denote by $\frk g$ the complexification of $\frk g_{\R}$.

%%%%%%%%%%%%%%%%%%%%%%%%%%%%%%%%%%%%%%%%%%%%%%%%%%%%%%%%%%%%%
%%% INVARIANT INVOLUTIVE STRUCTURES ON HOMOGENEOUS SPACES %%%
%%%%%%%%%%%%%%%%%%%%%%%%%%%%%%%%%%%%%%%%%%%%%%%%%%%%%%%%%%%%%

Let $X$ be a vector field on a smooth manifold $\Omega$.
% We denote by $\phi_t$ be the local flow of $X$ and we define $(\phi_t \cdot Y)_x \doteq (\phi_t)_* Y_{ \phi_{-t}(x) }$.
Suppose that $\mathcal V \subset \C T\Omega$ is a vector bundle. We say that $\mathcal V$ is preserved by the vector field $X$ if, for all smooth sections $Y$ of $\mathcal V$, we have that $[X,Y]$ is a smooth section of $\mathcal V$. %$\phi_t \cdot Y$ is a smooth vector of $\mathcal V$.
If $F$ is a set of vector fields, we say that $\mathcal V$ is preserved by $F$ if $\VV$ is preserved by all $X \in F$.

%
% \begin{lem}
% \label{lem_bundle_preserved} Let $X$ be a real vector field on a smooth manifold $\Omega$, and let $\phi_t$ be the local flow of $X$. Let $\mathcal V \subset \C T\Omega$ be a vector bundle. The following are equivalent:
% \begin{enumerate}
% 	\item For any smooth section $Y$ of $\mathcal V$, the bracket $[X,Y]$ belongs to $\mathcal V$;
% 	\item The vector bundle $\mathcal V$ is preserved by $X$.
% \end{enumerate}
% \end{lem}
%
% \begin{proof}
% See Problem 2.56 of \cite{gadea2012analysis}.
% \end{proof}

\begin{prp}
\label{prp:inv_defined_on_quotient}
Let $G$ be a Lie group with $K \subset G$ being a connected closed subgroup. Consider $\pi : G \to G/K = \Omega$ the quotient map. Suppose that $\frk h \subset \frk g$ is a complex Lie subalgebra and that $\frk h$ is preserved by $\ker \pi_*$. Then, $\pi_* \frk h$ defines an invariant involutive structure on $\Omega$.
\end{prp}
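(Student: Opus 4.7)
The plan is to check the four required properties in sequence: (a) that $\pi_*\frk h$ is well-defined as a subset of $\C T\Omega$; (b) that it has constant rank and varies smoothly, hence is a subbundle; (c) that it is involutive; and (d) that it is invariant under the $G$-action. Throughout, let $\frk k\subset \frk g$ denote the complexified Lie algebra of $K$, so that the fiber of $\ker\pi_*$ at any $g\in G$ equals $\frk k_g = \{X_g : X\in \frk k\}$. Unpacking the preservation hypothesis then yields the infinitesimal commutator relation $[\frk k,\frk h]\subset \frk h$, which is the algebraic input for the entire proof.

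Step (a) is the crux. The natural candidate is $(\pi_*\frk h)_{\pi(g)} := (\pi_*)_g\frk h_g$, and I must verify that this depends only on $\pi(g)$. Since the fiber of $\pi$ through $g$ is $gK$, the issue is to show $(\pi_*)_{gk}\frk h_{gk} = (\pi_*)_g\frk h_g$ for every $k\in K$. Differentiating $\pi\circ R_k = \pi$ gives $(\pi_*)_{gk} = (\pi_*)_g\circ (R_{k^{-1}})_{*,gk}$, and writing a left-invariant $X$ as $X_{gk} = \left.\frac{d}{dt}\right|_{t=0} gk\exp(tX_e)$ together with the conjugation identity $k\exp(tX_e)k^{-1} = \exp(t\,\mathrm{Ad}(k)X_e)$ yields $(R_{k^{-1}})_{*,gk} X_{gk} = (\mathrm{Ad}(k)X)_g$. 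Hence $(\pi_*)_{gk}\frk h_{gk} = (\pi_*)_g(\mathrm{Ad}(k)\frk h)_g$, and because $\ker(\pi_*)_g = \frk k_g$, the independence claim reduces to $\mathrm{Ad}(k)\frk h \subset \frk h + \frk k$. Integrating the bracket relation $[\frk k,\frk h]\subset\frk h$ along $\exp(tY)$ for $Y\in\frk k_\R$ and using that $K$ is connected (hence generated by such exponentials) yields the even stronger conclusion $\mathrm{Ad}(K)\frk h \subset \frk h$, which closes (a).

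Step (b) is elementary: the dimension of $(\pi_*)_g\frk h_g$ is constantly $\dim_\C\frk h - \dim_\C(\frk h\cap\frk k)$, and smoothness follows by pushing a left-invariant frame of $\frk h$ forward along a local section $\sigma\colon U\to G$ of $\pi$. For step (c), I will note that $\frk h+\frk k$ is itself a complex Lie subalgebra of $\frk g$ (by involutivity of $\frk h$ and $\frk k$ together with $[\frk k,\frk h]\subset \frk h$), hence an involutive left-invariant subbundle of $\C TG$ whose fiberwise push-forward coincides with $\pi_*\frk h$. Any two local sections of $\pi_*\frk h$ near a point lift through a local section of $\pi$ to smooth sections of $\frk h+\frk k$ on $G$; these lifts are $\pi$-related to the originals, so since $\frk h+\frk k$ is involutive their Lie bracket lies in $\frk h+\frk k$ and pushes forward into a section of $\pi_*\frk h$. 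Step (d) is the immediate computation $(T_g)_*(\pi_*)_h\frk h_h = (\pi_*)_{gh}(L_g)_*\frk h_h = (\pi_*)_{gh}\frk h_{gh}$, using $T_g\circ\pi = \pi\circ L_g$ and the left-invariance of $\frk h$.

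The main obstacle is step (a) — converting the infinitesimal preservation hypothesis into the global $K$-invariance required for descent to the quotient. Connectedness of $K$ is consumed precisely here, since without it one could not promote the exponential-local invariance coming from $[\frk k,\frk h]\subset\frk h$ to invariance under all of $\mathrm{Ad}(K)$.
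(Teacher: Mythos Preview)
Your proof is correct. The paper's own proof is a single sentence: it cites Problem~2.57 of Gadea et al.'s workbook for the fact that $\pi_*\frk h$ is an involutive vector bundle, and asserts that invariance ``follows directly from the definition''. You instead supply a complete self-contained argument, so the difference is only that you actually carry out the exercise. The decisive step is your~(a), where connectedness of $K$ is used to promote the infinitesimal condition $[\frk k,\frk h]\subset\frk h$ (extracted from the preservation hypothesis) to the global statement $\mathrm{Ad}(K)\frk h\subset\frk h$; this is exactly where the hypotheses are consumed, and you identify it correctly. One small imprecision in step~(c): the lifts of local sections of $\pi_*\frk h$ live on an open subset of $G$ rather than on all of $G$, and must be chosen $\pi$-related to the originals (as you do note) so that the bracket computation goes through --- but this is routine and does not affect validity.
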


\begin{proof}
 It follows from Problem 2.57 of \cite{gadea2012analysis} that $\pi_* \frk h$ is an involutive vector bundle. The invariance follows directly from the definition.
\end{proof}

The following definition is going to be useful.

\begin{dfn}
    Let $G$ be a Lie group and let $\frk k \subset \frk g_\R$ be a subalgebra. We say that $\frk k$ is a \emph{closed subalgebra} of $\frk g_\R$ if the group $K = \exp_G(\frk k)$ is a closed subgroup of $G$. When $\frk h$ is a left-invariant involutive structure, we say that $\frk h$ has \emph{closed orbits} if $\frk k_\R = \frk h \cap \frk g_\R$ is a closed subalgebra.
\end{dfn}

The following lemma is one of the key technical results in this section.

\begin{lem}
    Let $G$ be a Lie group endowed with a left-invariant elliptic structure $\frk h$ having closed orbits. Then, the homogeneous space $\Omega = G/K$, with $K = \exp(\frk h \cap \frk g_\R)$, has a natural complex structure given by $\pi_*(\frk h)$.
\end{lem}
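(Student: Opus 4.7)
The plan is to verify separately the two requirements: involutivity of $\pi_*\frk h$ on $\Omega$, and the direct-sum decomposition $\pi_*\frk h \oplus \overline{\pi_*\frk h} = \C T\Omega$ characterizing a complex structure. The first will be handled by invoking Proposition \ref{prp:inv_defined_on_quotient}, while the second will reduce to a linear-algebra dimension count after establishing the identification $\frk h \cap \overline{\frk h} = \frk k_\R \otimes \C$.

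First I would verify the hypothesis of Proposition \ref{prp:inv_defined_on_quotient}, namely that the left-invariant subbundle $\frk h$ is preserved by $\ker \pi_*$. At the identity, $\ker(\pi_*)_e = \frk k_\R$; by left-invariance of $\pi$, the kernel distribution on $G$ is the left-invariant subbundle associated to $\frk k_\R$. Because $\frk k_\R \subset \frk h$ and $\frk h$ is a Lie subalgebra, for any left-invariant $X \in \frk k_\R$ and $Y \in \frk h$ one has $[X, Y] \in \frk h$; the Leibniz rule extends this to arbitrary smooth sections $X = \sum f_i X_i$ and $Y = \sum g_j Y_j$, showing that all terms $f_i g_j [X_i, Y_j]$, $f_i X_i(g_j) Y_j$, and $g_j Y_j(f_i) X_i$ lie in $\Gamma(\mathcal V_\frk h)$. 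Proposition \ref{prp:inv_defined_on_quotient} then produces the $G$-invariant involutive subbundle $\pi_*\frk h \subset \C T\Omega$.

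Next I would establish the identification $\frk k := \frk h \cap \overline{\frk h} = \frk k_\R \otimes \C$ via the standard real-part/imaginary-part argument: if $Z = X + iY \in \frk h \cap \overline{\frk h}$ with $X, Y \in \frk g_\R$, then $\overline Z$ also lies in $\frk h \cap \overline{\frk h}$, whence $X, Y \in \frk h \cap \overline{\frk h} \cap \frk g_\R = \frk k_\R$. Since the kernel of $\pi_* : \C T_eG \to \C T_{eK}\Omega$ is precisely $\frk k$, the restriction $\pi_*|_\frk h$ has kernel $\frk h \cap \frk k = \frk k$, so $\pi_*\frk h$ has rank $\dim_\C \frk h - \dim_\C \frk k$ at $eK$, and likewise for $\pi_*\overline{\frk h}$. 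Moreover, if $X \in \frk h$ and $Y \in \overline{\frk h}$ satisfy $\pi_*X = \pi_*Y$, then $X - Y \in \ker \pi_* = \frk k \subset \frk h$ forces $Y \in \frk h \cap \overline{\frk h} = \frk k$, and therefore $\pi_*X = \pi_*Y = 0$; this shows $\pi_*\frk h \cap \pi_*\overline{\frk h} = 0$ at $eK$.

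Finally, the ellipticity assumption $\frk h + \overline{\frk h} = \frk g$ yields $\pi_*\frk h + \pi_*\overline{\frk h} = \pi_*\frk g = \C T_{eK}\Omega$, and the dimension identity $\dim_\C\frk g - \dim_\C\frk k = 2(\dim_\C\frk h - \dim_\C\frk k)$ confirms that the sum is direct at $eK$. Because $\pi$ is a real map, conjugation commutes with $\pi_*$, so $\overline{\pi_*\frk h} = \pi_*\overline{\frk h}$, and the $G$-invariance of all objects propagates the decomposition $\pi_*\frk h \oplus \overline{\pi_*\frk h} = \C T\Omega$ to every point of $\Omega$. The main subtlety is ensuring that the pointwise pushforward assembles into a smooth subbundle rather than a merely fiberwise object, but this is precisely what the preservation hypothesis of Proposition \ref{prp:inv_defined_on_quotient} supplies, so no additional work is needed beyond the verifications above.
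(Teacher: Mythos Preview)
Your proof is correct and follows essentially the same approach as the paper's own argument: use that $\pi$ is a real map to get $\overline{\pi_*\frk h} = \pi_*\overline{\frk h}$, combine this with ellipticity to obtain $\pi_*\frk h + \overline{\pi_*\frk h} = \C T\Omega$, and then conclude by a dimension count that the sum is direct. The paper compresses the last step into the phrase ``by dimensional reasons,'' whereas you spell out the identification $\frk h \cap \overline{\frk h} = \frk k_\R \otimes \C$, the rank computation, and (somewhat redundantly) a direct verification that $\pi_*\frk h \cap \pi_*\overline{\frk h} = 0$; you also make explicit the appeal to Proposition~\ref{prp:inv_defined_on_quotient} for involutivity, which the paper leaves implicit.
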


\begin{proof}
    Since $\pi$ is a real map, we have
    $\pi_*(\frk h) + \overline{ \pi_*(\frk h) } = \pi_*(\frk h) + \pi_*(\overline{ \frk h } ) =  \pi_*(\frk h + \overline{ \frk h } ) = \pi_*(\frk g) = \C T \Omega.$
    The last equality holds because $\pi_*$ is surjective, implying that $\pi_*(\frk h)$ is elliptic. Notice that, by dimensional reasons, the involutive structure $\pi_*(\frk h)$ is a complex structure.
\end{proof}

We need the following technical lemma:

\begin{lem}
\label{lem:elliptic:important}
Let $G$, $\frk h$, $\frk k$, $K$, and $\frk u$ be as in Theorem \ref{int:thm:homogeneous:converges_in_E_2}. Let $u$ be a left-invariant closed $s$-form in $K$. Then $u$ can be extended to a $\dext'$-closed form in $G$. This extension, when restricted to a leaf $gK$, is a left-invariant form on that leaf $gK$.
\end{lem}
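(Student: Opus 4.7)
The plan is to realize $u$ as a Chevalley--Eilenberg cocycle on $\frk k$, pull it back via the Lie algebra projection $\frk h \to \frk k$ afforded by the ideal decomposition, and interpret the result as a left-invariant $\dext'$-closed $(0,s)$-form on $G$. Since $K$ is a Lie group, the left-invariant form $u$ is determined by its value $u_e \in \Lambda^s \frk k^*$ at the identity, and the de Rham closedness of $u$ translates into $\dext^{\mathrm{CE}}_{\frk k} u_e = 0$, where $\dext^{\mathrm{CE}}_{\frk k}$ is the Chevalley--Eilenberg differential of $\frk k$ (under the classical identification between left-invariant forms on $K$ and $\Lambda^{\bullet} \frk k^*$, the de Rham differential becomes the CE differential).

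The hypothesis $\frk h = \frk k \oplus \frk u$ with $\frk u$ an ideal of $\frk h$ supplies a Lie algebra surjection $p \colon \frk h \to \frk k$: for $X_i = K_i + U_i$ with $K_i \in \frk k$ and $U_i \in \frk u$, the bracket $[X_1, X_2] = [K_1, K_2] + [K_1, U_2] + [U_1, K_2] + [U_1, U_2]$ has its last three summands in $\frk u$ by the ideal property while $[K_1, K_2] \in \frk k$ since $\frk k$ is a subalgebra, hence $p([X_1, X_2]) = [K_1, K_2] = [p(X_1), p(X_2)]$. Setting $\tilde u := p^* u_e \in \Lambda^s \frk h^*$ and invoking the naturality of the Chevalley--Eilenberg complex under Lie algebra homomorphisms yields $\dext^{\mathrm{CE}}_{\frk h} \tilde u = p^*(\dext^{\mathrm{CE}}_{\frk k} u_e) = 0$. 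Viewing $\tilde u$ as the associated left-invariant $(0,s)$-form on $G$, and using that $\dext'$ on left-invariant forms coincides with $\dext^{\mathrm{CE}}_{\frk h}$, I obtain $\dext' \tilde u = 0$ globally, so $\tilde u$ is the required $\dext'$-closed extension.

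For the restriction to a leaf $gK$, let $\phi_g \colon K \to gK$, $\phi_g(k) = gk$. The left-invariance of $\tilde u$ on $G$ gives $\phi_g^*(\tilde u|_{gK}) = \tilde u|_K$, and $\tilde u|_K = u$ because $p$ restricts to the identity on $\frk k$. Hence the pullback of $\tilde u|_{gK}$ to $K$ equals the original left-invariant $u$, which is the intended meaning of the restriction being left-invariant on each leaf. The main technical point is the verification that $p$ is a Lie algebra homomorphism, which is precisely where the ideal hypothesis on $\frk u$ is indispensable; everything else reduces to the standard dictionary between left-invariant forms and the Chevalley--Eilenberg complex together with the naturality of that complex under Lie algebra maps.
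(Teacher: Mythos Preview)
Your proof is correct and follows essentially the same strategy as the paper: both extend $u_e\in\Lambda^s\frk k^*$ by zero on $\frk u$ and use the ideal hypothesis on $\frk u$ to kill the extra bracket terms in the differential. The only difference is packaging---you invoke naturality of the Chevalley--Eilenberg complex under the Lie algebra surjection $p:\frk h\to\frk k$, whereas the paper carries out the case-by-case verification of $\dext w\in N^{1,s}(G;\frk h)$ by hand (splitting into the cases where zero, one, or more arguments lie in $\frk u$). One small point worth making explicit: to speak of restricting to a leaf $gK$ you need an honest form on $G$, so you should lift $\tilde u\in\Lambda^s\frk h^*$ to $\Lambda^s\frk g^*$ (e.g.\ by zero on $\overline{\frk u}$, which is exactly the paper's choice); the restriction to $\frk k\subset\frk h$ is independent of this lift, so your argument for the leaf restriction goes through unchanged.
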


\begin{proof}
\label{lem:cohomology_extension}
Since $u$ is left-invariant, we can regard it as an element of the dual of $\Lambda^s\frk k$ and we can extend it to the dual of $\Lambda^s\frk g$ by defining it as zero if any of its arguments it in $\frk u$ or $\overline{\frk u}$. Let $w_g = (L_{g^{-1}})^* u$. Notice that $$(L_g)^* : H^s(gK) \to H^s(K)$$ is an isomorphism. Thus, $w$ is a smooth $s$-form in $G$ and the restriction of $w$ to a leaf $gK$ is representive of a cohomology class in that leaf $gK$.

We are going to see that $w$ is also $\dext'$-closed. We just need to show that $w$ satisfies $\dext w \in N^{1,r}(\frk g; \frk h)$. Let $X_1, \ldots, X_{r+1} \in \frk h$. Since the exterior derivative commutes with the pullback, we have
$$ \dext w_g = \dext (L_{g^{-1}})^* u = (L_{g^{-1}})^* u (\dext u).$$
Now, since every term $X_j u (X_1, \ldots, \hat {X_j}, \ldots, X_{r+1})$ is zero, we only have to do the following computation
$$
\begin{aligned}
\dext u  (X_1,  \ldots, X_{r+1})  & = \sum_{j=1}^{r+1} (-1)^{j+1} X_j u (X_1, \ldots, \hat {X_j}, \ldots, X_{r+1}) \\
    & + \sum_{j < k} (-1)^{j+k+1} u ([X_j, X_k], X_1, \ldots, \hat {X_j}, \ldots, \hat {X_k}, \ldots, X_{r+1}) \\
    & = \sum_{j < k} (-1)^{j+k+1} u([X_j, X_k], X_1, \ldots, \hat {X_j}, \ldots, \hat {X_k}, \ldots, X_{r+1}).
\end{aligned}
$$

If we assume that $X_j \in \frk k$ for all $j$, then $(\dext  w_g)  (X_1,  \ldots, X_{r+1}) = 0$ because $\dext u = 0$.

However, if we assume that $X_1 \in \frk u$ and that $X_j \in \frk k$ for j > 1, then
$$
\begin{aligned}
\sum_{j < k} (-1)^{j+k+1} & u([X_j, X_k], X_1, \ldots, \hat {X_j}, \ldots, \hat {X_k}, \ldots, X_{r+1}) \\ &= \sum_{1 < k} (-1)^{1+k+1} u([X_1, X_k], X_2, \ldots, \hat {X_j}, \ldots, \hat {X_k}, \ldots, X_{r+1}) = 0
\end{aligned}
$$
because $\frk u$ is an ideal and so $[X_1, X_k] \in \frk u$.

Finally, if we assume that two or more elements are $X_1,X_2 \in \frk k^\perp$, we can use the fact that $\frk u$ is an ideal and an argument similar to the one above to prove that we have $(\dext  w_g)  (X_1(g),  \ldots, X_{r+1}(g)) = 0.$ Therefore, $\dext  w_g \in N^{1,q}(G; \frk h)$.
\end{proof}

Now we have everything we need to prove Theorem \ref{int:thm:homogeneous:converges_in_E_2}.

\begin{proof}[Proof of Theorem \ref{int:thm:homogeneous:converges_in_E_2}]

%Since we are assuming that $G$ is simply-connected and by construction $K$ is always connected, by applying the long exact sequence of homotopy, we have that $\Omega$ is also simply-connected.

% Furthermore, by Proposition \ref{prp:ext_act_complexified} we can extend the action of $G$ on $\Omega$ to an action $T$ on its universal complexification $G_\C$. Let $p \in \Omega$ be any point and consider $U$ the Lie group given by
% $$ U = \{ g \in G_\C : T_g p = p\}.$$

% The group $U$ is clearly closed and it follows from basic results on homogeneous spaces that $G_\C / U = \Omega$. By a Theorem 10.1.10 of \cite{hilgert2011structure} we have that the action is regular and so $U$ is a complex Lie subgroup of $G_\C$.
    Let $[\beta] \in H^s(K; \C)$. We can always choose $\beta$ left-invariant, so that, by the last Lemma \ref{lem:cohomology_extension}, there exists an extension of $\beta$ to $G$ such that $\dext' \beta' = 0$. This is a cohomology extension and, according to Leray-Hirsh Theorem \citep[Theorem 9 of Section 7, Chapter 5]{spanier1994algebraic}, the proof is completed.
\end{proof}

Notice that, in Equation \eqref{int:eq:decomposition_in_E_2}, the terms $H^s(K; \C)$ can always be computed by using algebraic methods. On the other hand, the terms $H^{p,r}(\Omega; \mathcal V)$ can be more complicated to compute.

Our next objective is to find some conditions for which the terms $H^{p,r}(\Omega; \mathcal V)$ can be computed using only algebraic methods. The first thing we do is find conditions so that $\Omega$ is a compact connected Riemann surface. That is exactly what we do in Theorem \ref{int:omegaIsRieaman}.

\begin{proof}[Proof of Theorem \ref{int:omegaIsRieaman}]
    %Let $\frk h \subset \frk g$ be an elliptic subalgebra such that $\dim \frk h = \dim \frk g - 1$.
    Let $\ip{,}$ be a Hermitian extension of any ad-invariant inner product on $\frk g_\R$.
		We write $\frk k = \frk h \cap \overline{\frk h}$ and we define $\frk k^\perp = \{Z \in \frk h : \ip{Z,W} = 0 ~\forall W \in \frk h\}.$ Since $\dim \frk h = \dim \frk g - 1$, we have that $\dim \overline{\frk k^\perp} = 1$ and we clearly have $\frk g = \frk k \oplus \frk k^\perp \oplus \overline{\frk k^\perp}.$
Since $\frk k^\perp$ has dimension 1, it is Abelian. More than that, it is an ideal. In fact, we have that, if $T \in \frk k_\R$ and $Z \in \frk h^\perp$, then
		$\ip{[Z,T],U} = \ip{Z,[T,U]} = 0, ~ \forall U \in \frk k,$
		which means that $[Z,T] \in \frk k^\perp$, for all $T \in \frk k_\R$ and, by linearity, this is also true for all $T \in \frk k$. Therefore, if we assume that $K = \exp_G(\frk k_\R)$ is closed, we have all the necessary conditions to apply Theorem \ref{int:thm:homogeneous:converges_in_E_2}. Notice that, in this case, the homogeneous space $\Omega = G / K$ is a compact Riemann surface, which concludes the proof.
\end{proof}

Notice that, in Example \ref{exa:there_are_examples}, we constructed an example satisfying the hypothesis of the Theorem \ref{int:omegaIsRieaman}.

Now we discuss some techniques to deal with the case in which the complex dimension of $\Omega$ is bigger than 1. For this, we introduce the concepts of Lie algebra cohomology and of Lie algebra cohomology relative to a subalgebra. With this new concepts, we can state Bott's theorem, which gives us an algebraic way to compute the Dolbeault cohomology of certain compact homogeneous complex manifolds.

%%%%%%%%%%%%%%%%%%%%%%%%%%%%%%%%%%
%%% COHOMOLOGY OF LIE ALGEBRAS %%%
%%%%%%%%%%%%%%%%%%%%%%%%%%%%%%%%%%

\subsection{Cohomology of Lie algebras}
\label{sec:cohomology_of_lie_algebras}

In this section, we briefly recall some definitions regarding Lie algebra cohomology. These definitions are necessary in order to introduce a theorem by Bott. We refer to \cite{bott1957homogeneous} and \cite{hochschild1953cohomology} for details.

\begin{dfn} Let $\frk g$ be a Lie algebra over $\C$ and $M$ a vector space over $\C$. A \emph{representation} of $\frk g$ is a Lie algebra homomorphism of $\frk g$ into the Lie algebra of all linear transformations of $M$ into $M$.
\end{dfn}

\begin{dfn}
Let $\frk g$ be a Lie algebra over $\C$ and $M$ a vector space over $\C$. We say that the pair $(M, \phi)$ is a \emph{$\frk g$-module} if $\phi$ is a representation of the Lie algebra $\frk g$ into $M$.
We denote by $X \cdot x$ the action of $\phi(X)$ on the element $x \in M$.
\end{dfn}

We denote by $C^p(\frk g; M)$ the set of all alternating multi-linear $p$-forms on $\frk g$ with values in $M$. Also, we denote by $C(\frk g; M)$ the formal sum $\sum_p C^p(\frk g; M)$. We identify $C^0(\frk g; M)$ with $M$. 

For $p > 0$, $u \in C^p(\frk g; M)$, and $X_1, \ldots, X_{p+1} \in \frk g$, we define a $\frk g$-homomorphism $\dext : C^p(\frk g; M) \to C^{p+1}(\frk g; M)$ by the formula
\begin{equation}
\label{eq:algebraic_diff_op}
\begin{aligned}
	\dext u(X_1, \ldots, X_{p+1})
		= & \sum_{j=1}^{p+1} (-1)^{j+1} X_j \cdot u(X_1, \ldots, \hat{X}_j, \ldots, X_{p+1})  \\
		 + & \sum_{j < k} (-1)^{j+k+1} u([X_j, X_k], X_1, \ldots, \hat X_j, \ldots, \hat X_k, \ldots, X_p).
\end{aligned}
\end{equation}
%From the formula \ref{eq:algebraic_diff_op}, we see that 
The operator $\dext$ is just the algebraic version of the usual exterior differentiation on smooth manifolds. As expected, we have: if $u \in C^0(\frk g; M) = M$, then $(\dext u)(X) = X \cdot u$ and, obviously, it holds that $\dext^2 = 0$.

Therefore, we have a complex with respect to $\dext$. The cohomology space associated to $\dext$, denoted by $H^*(\frk g; M)$, is called cohomology module of $\frk g$ with coefficients in $M$. It is easy to see that $H^*(\frk g; M)$ is a vector space over $\C$.

% In \cite[Theorem 2.3]{chevalley1948cohomology}, Chevalley and Eilenberg proved the following result:

% \begin{thm} Let $G$ be a connected compact Lie group, then
% $ H^*(G) = H^*(\frk g; \C).$
% \end{thm}

If $\frk u \subset \frk g$ is a subalgebra, we denote by $C^k(\frk g, \frk u; M)$ the set of elements of $C^k(\frk g; M)$ that vanishes when have any parameter is in $\frk u$. This subset is stable under the operator $\dext$ and thus we also can introduce its cohomology space, which is denoted by $H^k(\frk g, \frk u; M)$.

\subsection{Bott's Theorem}

    In this section, we state a theorem by Bott \cite{bott1957homogeneous} that is useful when dealing with elliptic involutive structures on compact Lie groups.
   
    Let $U$ and $G$ be complex Lie groups with $U$ closed in $G$. Let $\Omega = G/U$ and suppose that $G$ is connected and $\Omega$ is compact and simply connected.
    
    By a theorem of Montgomery \citep{montgomery1950simply}, if $K$ is a maximal compact subgroup of $G$, under the above conditions, $K$ acts transitively on $\Omega$ and therefore $\Omega$ has another description, namely, $ \Omega = K/H$
    with $H = U \cap K$.
    
    We denote, respectively, by $\frk g, \frk u, \frk k, \frk h$ the complexified Lie algebras of $G$, $U$, $K$ and $H$. Since $G$ is a complex Lie group, we can decompose $\frk g$ into two ideals, that is,  $\frk g = \frk g_\alpha \oplus \frk g_\beta$, in which $\frk g_\alpha$ is the set of all left-invariant vector fields annihilated by all anti-holomorphic differential forms on $G$ and $\frk g_\beta$ is the set of all left-invariant vector fields annihilated by all holomorphic differential forms on $G$.
    
    Let $\alpha : \frk g \to \frk g_\alpha$ be the projection and denote by $\imath : \frk k \to \frk g$ the inclusion of $\frk k$ into $\frk g$. Write $\frk u_* = (\alpha \imath)^{-1}(\frk u)$. Clearly, $\frk u_* \subset \frk k$. In the case where $G$ is the complexification of $K$,
    which always can be assumed by Montgomery's theorem and is the case we are working on, $u_*$ can be identified with $\frk u_\R$, the real Lie algebra of $U$.

    \begin{thm}[Bott's theorem]
    \label{thm:bott}
        Let $G$, $U$, $K$, $H$, and $\Omega$ with the conditions we just established and let $\mathscr O^p$ be the sheaf of local holomorphic $p$-forms on $\Omega$. Then,
        $$ H^q(\Omega; \mathscr O^p) = H^q(\frk u_*, \frk h, \Lambda^p(\frk k / \frk u_*)^*)$$
        with $u_*$ acting on $\Lambda^p(\frk k / \frk u_*)^*$ via adjoint action.
    \end{thm}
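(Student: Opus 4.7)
The plan is to reduce the Dolbeault cohomology of $\Omega$ to Lie algebra cohomology by a two-stage reduction: first from arbitrary forms to $K$-invariant forms via averaging, then from $K$-invariant forms to an algebraic complex on $\Lambda^p(\frk k/\frk u_*)^*$. The starting point is Dolbeault's theorem, which gives $H^q(\Omega; \mathscr O^p) \cong H^{p,q}_{\bar\partial}(\Omega)$, so it is enough to compute the $\bar\partial$-cohomology of $\Omega$.

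Next I would imitate the Chevalley--Eilenberg averaging argument from Section \ref{sec:ce_tech}. Because $K$ is compact and acts on $\Omega = K/H$ by biholomorphisms, the averaging operator $A_K u = \int_K k^* u \dint \mu(k)$ commutes with $\bar\partial$, fixes $K$-invariant forms, and sends every form to a $K$-invariant form. A standard diagram chase then shows that the inclusion of the subcomplex of $K$-invariant $(p,q)$-forms into the full Dolbeault complex induces an isomorphism on cohomology. At this point, $H^{p,q}_{\bar\partial}(\Omega)$ equals the cohomology of the subcomplex $(\Omega^{p,q}(\Omega)^K, \bar\partial)$ of $K$-invariant forms.

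The third step is to identify this subcomplex algebraically. Any $K$-invariant form is determined by its value at the origin coset $o \in \Omega$, and the evaluation map yields an isomorphism between $\Omega^{p,q}(\Omega)^K$ and the space of $H$-invariant elements of $\Lambda^p(T^{1,0}_o\Omega)^* \otimes \Lambda^q(T^{0,1}_o\Omega)^*$. Using the map $\alpha\imath : \frk k \to \frk g_\alpha$ and the definition $\frk u_* = (\alpha\imath)^{-1}(\frk u)$, one checks that $T^{1,0}_o\Omega \cong \frk g_\alpha/(\frk u \cap \frk g_\alpha) \cong \frk k/\frk u_*$ as complex vector spaces, while the antiholomorphic directions correspond to $\frk u_*/\frk h$. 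Taking invariants under $H$ (or equivalently under $\frk h$) and reinterpreting the indices, the space of $K$-invariant $(p,q)$-forms becomes precisely $C^q(\frk u_*, \frk h; \Lambda^p(\frk k/\frk u_*)^*)$ with $\frk u_*$ acting on the coefficients by the adjoint representation.

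Finally I would match the differentials. The Koszul-type formula for the exterior derivative of an invariant form on a homogeneous space involves only brackets and the adjoint action of the structure group; when restricted to $K$-invariant forms all vector-field derivative terms vanish, and the complex structure on $\Omega$ forces only the component along $\frk u_*/\frk h$ of the exterior derivative to survive in the $\bar\partial$-direction. This identifies $\bar\partial$ on $K$-invariant forms with the algebraic coboundary \eqref{eq:algebraic_diff_op} for the pair $(\frk u_*, \frk h)$ with coefficients in the $\frk u_*$-module $\Lambda^p(\frk k/\frk u_*)^*$, yielding the claimed equality. The main obstacle I expect is this last matching step: one has to verify carefully that the $(0,1)$-part of the intrinsic exterior derivative on $\Omega$, when pulled back to the algebraic model, really coincides with the differential of the relative Chevalley--Eilenberg complex $C^\bullet(\frk u_*, \frk h; -)$. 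This requires keeping track of how the complex structure on $\frk k/\frk u_*$ interacts with the bracket on $\frk u_*$, and in particular using that $\frk u$ is a complex Lie subalgebra of $\frk g$ so that $\frk u_*$ is closed under bracket with $\frk h$ and stabilizes $\frk k/\frk u_*$ via the adjoint action.
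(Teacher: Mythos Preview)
The paper does not contain a proof of this statement: Theorem~\ref{thm:bott} is stated with attribution to Bott and cited to \cite{bott1957homogeneous}, but no argument is given in the paper itself. So there is no ``paper's own proof'' to compare against; the result is used as a black box in the proof of Theorem~\ref{int:thm:homogeneous:homo_on_simply3}.

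That said, your sketch follows the classical line of argument for Bott's theorem: Dolbeault's isomorphism, reduction to $K$-invariant forms by averaging over the compact group $K$, identification of $K$-invariant $(p,q)$-forms at the base point with $C^q(\frk u_*, \frk h; \Lambda^p(\frk k/\frk u_*)^*)$, and matching $\bar\partial$ with the relative Chevalley--Eilenberg differential. The steps are in the right order and the identifications you name are the correct ones. The part you flag as the main obstacle is indeed where the work lies; to carry it out you will need, beyond what you wrote, the fact that $\frk u_*$ really is a Lie subalgebra of $\frk k$ containing $\frk h$ (this uses that $\frk u \cap \frk g_\alpha$ is a subalgebra of $\frk g_\alpha$ and that $\alpha\imath$ is a Lie algebra isomorphism onto $\frk g_\alpha$), and a careful check that the averaging argument for $\bar\partial$ gives an isomorphism and not merely an injection---on a compact homogeneous complex manifold this follows from ellipticity of the $\bar\partial$-Laplacian and commutation of the Green operator with the $K$-action, exactly as in the Hodge-theoretic argument the paper uses for $\dext'$ in Section~3.
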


\subsubsection{Applications of Bott's theorem}

Now we prove some propositions that are useful when applying Bott's theorem.

The next proposition shows that any compact homogeneous space $\Omega$ endowed with an invariant complex structure can be represented as a quotient of two complex Lie groups.

\begin{prp}
\label{prp:ext_act_complexified} Let $G$ be a connected compact Lie group acting transitively on a smooth manifold $\Omega$. If $\Omega$ is endowed with a complex structure invariant by the action of $G$, then this action extends to a transitive holomorphic action of $G_\C.$
\end{prp}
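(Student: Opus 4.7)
My strategy is to combine two classical facts: the automorphism group of a compact complex manifold is naturally a complex Lie group, and every compact Lie group $G$ admits a universal complexification $G_\C$ satisfying a universal property with respect to homomorphisms into complex Lie groups.

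First, because $G$ is compact and acts transitively on $\Omega$, we may identify $\Omega$ with the coset space $G/H$, where $H$ is the stabilizer of a chosen basepoint; in particular $\Omega$ is compact. The $G$-invariance of the complex structure $J$ means that each $g \in G$ acts as a biholomorphism, so the action encodes a smooth group homomorphism $\rho : G \to \Aut(\Omega)$, where $\Aut(\Omega)$ denotes the group of holomorphic automorphisms of $\Omega$.

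Second, I invoke the theorem of Bochner-Montgomery: when $\Omega$ is a compact complex manifold, $\Aut(\Omega)$ carries a natural structure of finite dimensional complex Lie group whose Lie algebra is the space of globally defined holomorphic vector fields on $\Omega$. Thus $\rho$ is a smooth homomorphism from the compact Lie group $G$ into a complex Lie group. At the infinitesimal level, the $G$-invariance of $J$ is equivalent to $\mathcal L_{X^*} J = 0$ for every fundamental vector field $X^*$, $X \in \frk g_\R$; this means that the $(1,0)$-part of $X^*$ is a holomorphic vector field, and extending $\C$-linearly yields a complex Lie algebra homomorphism from $\frk g = \frk g_\R \otimes \C$ into the Lie algebra of holomorphic vector fields on $\Omega$, which is precisely the differential of the sought extension at the identity.

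Finally, the universal complexification $G_\C$ of the connected compact Lie group $G$ is characterized by the property that every smooth homomorphism from $G$ into a complex Lie group factors uniquely through a holomorphic homomorphism from $G_\C$. Applying this to $\rho$ produces a holomorphic homomorphism $\rho_\C : G_\C \to \Aut(\Omega)$, which defines the desired holomorphic action of $G_\C$ on $\Omega$ extending $\rho$. Transitivity is automatic, since $G \hookrightarrow G_\C$ and $G$ already acts transitively on $\Omega$. The main obstacle is therefore the citation of the Bochner-Montgomery theorem and of the existence of the universal complexification with its universal property; the compatibility of the Lie algebra extension with brackets is a direct consequence of the integrability of $J$ and of the fact that fundamental vector fields of a smooth action form a Lie algebra homomorphism (up to sign).
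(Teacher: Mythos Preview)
Your proof is correct and follows essentially the same strategy as the paper: both arguments observe that $\Omega$ is compact so that $\Aut_{\mathcal O}(\Omega)$ is a complex Lie group (the paper cites Kobayashi, you cite Bochner--Montgomery, which is the same theorem), and then invoke the universal property of the complexification $G_\C$ to factor the homomorphism $G\to\Aut_{\mathcal O}(\Omega)$ through $G_\C$, with transitivity inherited from $G\subset G_\C$. Your additional paragraph on the infinitesimal picture is correct but not needed for the argument.
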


\begin{proof} We are assuming that the action of $G$ preserves the complex structure, so each automorphism $T_g : \Omega \to \Omega$ is holomorphic and we have a group homomorphism
\begin{equation}
\label{eq:cont_homo_of_groups}
	g \in G \mapsto T_g \in \Aut_\Ho(\Omega).
\end{equation}

Since $\Omega$ is compact, by \cite[Theorem 1.1, Chapter III]{kobayashi2012transformation}, the set $\Aut_\Ho(\Omega)$ is a complex Lie group. The topology on $\Aut_\Ho(\Omega)$ is the compact open topology and, in this case, it is the topology of uniform convergence over compact sets. Since $T : G \times \Omega \to \Omega$ is smooth, particularly it is uniformly continuous. Therefore, the map \eqref{eq:cont_homo_of_groups} is continuous and is automatically a Lie group homomorphism.

Let $(G_\C, \eta)$ be the universal complexification of $G$. By the universal property, there exists a complex Lie group homomorphism $g \in G_\C \mapsto T'_g \in \Aut_\Ho(\Omega)$ such that $T'_{\eta(g)} = T_g$, for all $g \in G$. Since $G$ is compact, we have that $\eta$ is injective. Therefore, $G$ can be identified with $\eta(G)$ and the action $T'$ can be considered an extension of $T$.
\end{proof}

The next proposition shows that, in order to study the cohomology of a left-invariant elliptic involutive structure defined over a compact semisimple Lie group, it is enough to study the cohomology of the structure lifted to the universal covering. The advantage of such approach is to be able to assume that the group in question is simply-connected, thus removing some topological barries.

\begin{prp}
\label{ss:prp:sctoss} Let $G$ be a semisimple compact Lie group and suppose that it is endowed with an elliptic involutive structure $\frk h \subset \frk g$. Then, its universal covering group $G'$ is also compact and admits an elliptic involutive structure $\frk h'$ such that $H^{p,q}(G; \frk h) \cong H^{p,q}(G';\frk h')$.
\end{prp}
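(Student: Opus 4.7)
The plan is to use the covering map $\pi : G' \to G$ to identify $H^{p,q}(G;\frk h)$ with the $\Gamma$-invariant part of $H^{p,q}(G';\frk h')$, where $\Gamma = \ker \pi$ is a finite central subgroup of $G'$, and then to show that this $\Gamma$-action is in fact trivial.

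First I would verify that $G'$ is compact and that $\frk h'$ is a well-defined elliptic structure. Weyl's theorem for compact semisimple Lie groups asserts that $\pi_1(G)$ is finite, so the universal cover $\pi : G' \to G$ is a finite covering and $G'$ is compact. Since $\pi$ is simultaneously a smooth covering and a Lie group homomorphism, the Lie algebra of $G'$ is canonically identified with $\frk g$, and the same subalgebra $\frk h \subset \frk g$ defines a left-invariant involutive structure $\frk h'$ on $G'$; this structure is elliptic because ellipticity is the purely algebraic condition $\frk h + \overline{\frk h} = \frk g$. The pullback $\pi^* : (\mathcal C^\infty(G;\Lambda^{p,\bullet}),\dext') \to (\mathcal C^\infty(G';\Lambda^{p,\bullet}),\dext')$ is an injective chain map whose image is exactly the subcomplex of $\Gamma$-invariant sections, since $\Gamma \subset Z(G')$ implies that right translation by $\gamma \in \Gamma$ coincides with left translation and hence preserves $\frk h'$. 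Therefore the averaging operator $A_\Gamma = |\Gamma|^{-1} \sum_{\gamma \in \Gamma} R_\gamma^*$ is a cochain projection onto the $\Gamma$-invariants, yielding $H^{p,q}(G;\frk h) \cong H^{p,q}(G';\frk h')^\Gamma$.

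The hard part will be to show that $\Gamma$ acts trivially on $H^{p,q}(G';\frk h')$. For ordinary de Rham cohomology this is immediate from homotopy invariance, since $G'$ is connected; but here the natural Cartan-type contraction $\imath_{Z^R}$ by a right-invariant vector field does not induce a $\dext'$-homotopy on the quotient complex $\Lambda^{p,q}$ (essentially because $Z^R$ is not tangent to $\frk h'$), so one cannot Cartan-ize directly. My approach would be to connect $\gamma \in \Gamma$ to $e$ by a path $g_t$ in $G'$ and use the family $L_{g_t}^*$, which always preserves $\frk h'$ since left translations preserve left-invariant structures; combined with the Hodge decomposition for the left-invariant elliptic Box operator $\Box$, the resulting $G'$-action on the finite-dimensional space $\ker \Box \cong H^{p,q}(G';\frk h')$ is a smooth representation of a connected compact group, and an analysis using that $\Gamma$ is central together with the fact that left-invariant harmonic forms are fixed by every $L_g^*$ should force the $\Gamma$-action to be trivial. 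Making this last step rigorous -- in particular, understanding the full $G'$-module structure of $\ker \Box$ beyond its left-invariant part -- is the main technical difficulty I foresee.
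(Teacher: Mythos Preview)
Your reduction to showing that the deck group $\Gamma = \ker\pi$ acts trivially on $H^{p,q}(G';\frk h')$ is sound: the identification $H^{p,q}(G;\frk h) \cong H^{p,q}(G';\frk h')^\Gamma$ via $\pi^*$ and the averaging projector $A_\Gamma$ is correct, and compactness and ellipticity on $G'$ are clear. The gap is in the last step. The left $G'$-action on $\ker \Box^{p,q}$ is a genuine finite-dimensional representation, but connectedness of $G'$ alone does not force it, or its restriction to $\Gamma$, to be trivial: the infinitesimal generator of $g \mapsto L_g^*$ is the Lie derivative along a \emph{right}-invariant field, and since such a field is not a section of the left-invariant bundle $\frk h'$, Cartan's formula does not descend to a $\dext'$-chain homotopy (the interior product shifts the filtration the wrong way, sending $N^{p,q}$ only into $N^{p-1,q}$ rather than into $N^{p,q-1}$). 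Nor does centrality of $\Gamma$ together with the existence of left-invariant harmonic representatives help, since nothing in your argument prevents $\ker\Box$ from containing irreducible $G'$-summands with nontrivial central character. As written, the plan does not close.

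The paper sidesteps this obstacle entirely by working sheaf-theoretically rather than through the $\Gamma$-action on cohomology. It identifies $H^{p,q}$ with sheaf cohomology of the solution sheaf $\mathcal S_{\frk h}$ and then uses that $\pi$ is a finite covering: it cites a result from Bredon's \emph{Sheaf Theory} for finite-fibered maps, and also sketches a direct \v{C}ech argument in which a Leray cover of $G'$ by sets on which $\pi$ is a diffeomorphism is pushed down to a Leray cover of $G$, producing a cochain-level isomorphism of \v{C}ech complexes. What this buys is that the argument stays at the level of local sections, where $\pi$ is a local isomorphism and no global representation-theoretic input is needed; what your route would buy, if the final step could be completed, is a concrete description via harmonic representatives and the representation theory of $G'$.
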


\begin{proof}
By Weyl's theorem \citep[Theorem 4.26]{knapp2016representation}), the universal covering $G'$ is compact. We know that the covering map $\pi : G' \to G$ gives an isomorphism between $\frk g$ and $\frk g'$. Via this isomorphism, we define $\frk h' \subset \frk g'$. We also have that $\pi^{-1}(x)$ is finite for every $x \in G$. Therefore, by \cite[Theorem 11.1]{bredon2012sheaf}, we have the required isomorphism and the proof is completed.

In order to give a direct proof, we can construct the required isomorphism as follows. We denote by $\mathcal S_{\frk h}$ the sheaf of local solutions of $\frk h$ and by $\mathcal S_{ \frk h'}$ the sheaf of local solutions of $\frk h'$. Let $\mathcal U'$ be a finite covering of $G'$ consisting of sets satisfying the following conditions: for every $U' \in G'$ and $U = \pi(U')$, it holds that $\pi|_{U'}: U' \to U$ is a diffeomorphism and also, for $q > 0$, it holds that $H^q(U'; \mathcal S_{\frk h'}) = 0$.
The condition about the diffeomorphism is possible because $\pi$ is a covering map and the condition about the sheaf cohomology is possible because $\frk h'$ is elliptic. By taking $\mathcal U = \{ U = \pi(U'): U' \in U\}$, we have a finite open covering for $G$ such that $H^q(U; \mathcal S_{\frk h}) = 0$.

Next we construct a cochain isomorphism between $(C^q(\mathcal U', \mathcal S_{\frk h'}), \delta'^q)$ and $(C^q(\mathcal U, \mathcal S_{\frk h}), \delta^q)$. Since $\mathcal U'$ is finite, we can enumerate its elements $U'_1, U'_2, \ldots$ and we have a one-to-one correspondence with elements of $\mathcal U$, namely $U_1, U_2, \ldots$. Thus, for any simplex of $\sigma \in N(\mathcal U)$, there is an unique associated simplex $\sigma' \in N(\mathcal U')$, that is, if
$\sigma = (i_0, \ldots, i_q)$ and $|\sigma| = U_{i_0} \cap \cdots \cap U_{i_q} $, then $\sigma' = (i_0, \ldots, i_q)$ and $|\sigma'| = U'_{i_0} \cap \cdots \cap U'_{i_q} $.

Let $f' \in (C^q(\mathcal U', \mathcal S_{\frk h'}), \delta'^q)$ and let $\sigma'$ be a $q$-simplex. We define $f_\sigma$ in $|\sigma|$ by $f_\sigma(x) = f'(\pi^{-1}(x))$. The function $f_\sigma$ is well defined because $\pi|_{\sigma'}$ is a diffeomorphism and is in $\mathcal S_{\frk h}$.
We define a map $\phi : C^q(\mathcal U', \mathcal S_{\frk h'}) \to C^q(\mathcal U', \mathcal S_{\frk h'})$ which is obviously bijective and also commutes with the restrictions. Thus, we have that $\delta^q \phi(f') = \phi( \delta'^q f')$ and the map $\phi$ induces an isomorphism between $H^q(\mathcal U',\mathcal S_{\frk h})$ and $H^q(\mathcal U;\mathcal S_{\frk h})$.

Now, by using Leray's Theorem \cite[Section D, Theorem 4]{gunning1965analytic}, for every $q \geq 0$, we have that $H^q(G',\mathcal S_{\frk h'}) \cong H^q(\mathcal U';\mathcal S_{\frk h'})$ and $H^q(G,\mathcal S_{\frk h}) \cong H^q(\mathcal U;\mathcal S_{\frk h})$. Finally, we have $H^q(G',\mathcal S_{\frk h'}) \cong H^q(G,\mathcal S_{\frk h})$. % :)
% Since $\pi$ is a proper finite-to-one map, by Corollary of \cite{demailly1997complex} we have that $H^p(G',\mathcal S_{\frk h'}) \cong H^p(G,q_*\mathcal S_{\frk h'}) = H^p(G,\mathcal S_{\frk h})$.
\end{proof}

    Now, we assume $G$ to be a compact semisimple Lie group. Combining Proposition \ref{ss:prp:sctoss}, Proposition \ref{prp:ext_act_complexified} and the long exact sequence of homotopy groups, we see that we can apply Bott's theorem to $\Omega$, which, in connection to Theorem \ref{int:thm:homogeneous:converges_in_E_2}, gives us a complete algebraic description of $H^{p,q}(G; \frk h)$.
    This is exactly Theorem \ref{int:thm:homogeneous:homo_on_simply3}, which we can finally prove:
    
    % \begin{thm}
    %     \label{thm:homogeneous:homo_on_simply3}
    %     Let $G$ be a connected, semisimple and compact Lie group and suppose that $\frk h \subset \frk g$ is an elliptic involutive structure having closed orbits. Let $\frk k = \frk h \cap \overline{\frk h}$ and assume that there exists an ideal $\frk z \subset \frk h$ such that $\frk k \oplus \frk z = \frk h$.
    %     Consider the homogeneous space $\Omega = G / K$, with $K = \exp_G(\frk k \cap \frk g_\R)$, endowed with the complex structure $\pi_* \frk h$. With $\frk u_*$ defined as in Bott's theorem, we have
    %     $$ H^{p,q}(G; \frk h) \cong \sum_{r + s = q} H^r(\frk u_*, \frk k, \Lambda^p(\frk g / \frk u_*)) \otimes H^s(\frk k).$$
    % \end{thm}

    \begin{proof}[Proof of Theorem \ref{int:thm:homogeneous:homo_on_simply3}]
        By Proposition \ref{ss:prp:sctoss}, we can assume $G$ to be simply-connected. Notice that $K$ is connected by construction, therefore, we use the long exact sequence of homotopy groups to conclude that $\Omega$ is simply-connected. Since $G$ is compact, we have that $\Omega$ is compact. By combining this with the result obtained in Proposition \ref{prp:ext_act_complexified}, we can apply Bott's theorem. The proof is completed.
    \end{proof}

    \begin{cor}
        Let $G$ be a semisimple compact Lie group and let $T \subset G$ be a maximal torus. Consider the following elliptic Lie algebra $ \frk h = \frk t \oplus \bigoplus_{\alpha \in \Delta_+} \frk g_\alpha$ with $\frk t$ being the complexification of the Lie algebra of the maximal torus $T$. Then we have $ H^{0,q}(G) = H^{0,q}(T).$
    \end{cor}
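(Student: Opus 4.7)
The plan is to apply Theorem \ref{int:thm:homogeneous:homo_on_simply3} in the case $p=0$ and then identify the two factors of the resulting tensor decomposition.

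First I would verify the hypotheses. Since $\overline{\frk g_\alpha}=\frk g_{-\alpha}$ and $\Delta=\Delta_+\cup(-\Delta_+)$, one immediately checks $\frk h+\overline{\frk h}=\frk g$ (so $\frk h$ is elliptic) and $\frk h\cap\overline{\frk h}=\frk t$. Thus $\frk k=\frk t$ and $K=T$ is closed in $G$, so $\frk h$ has closed orbits. To exhibit the required ideal complement, set $\frk z=\bigoplus_{\alpha\in\Delta_+}\frk g_\alpha$. The root-space calculus $[\frk t,\frk g_\alpha]\subset\frk g_\alpha$ and $[\frk g_\alpha,\frk g_\beta]\subset\frk g_{\alpha+\beta}$, together with the defining property that $\Delta_+$ is closed under root-addition, show that $\frk z$ is an ideal of $\frk h$ with $\frk t\oplus\frk z=\frk h$.

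With these in place, Theorem \ref{int:thm:homogeneous:homo_on_simply3} applied for $p=0$ and using $\Lambda^0(\frk g/\frk u_*)\cong\C$ (trivial module) yields
\[
H^{0,q}(G;\frk h)\;\cong\;\bigoplus_{r+s=q} H^r\bigl(\frk u_*,\frk k,\C\bigr)\otimes H^s(\frk k).
\]
By Bott's theorem (Theorem \ref{thm:bott}), the first factor is the Dolbeault cohomology $H^r(\Omega;\mathscr O)$ of the full flag variety $\Omega=G/T$ with values in its structure sheaf.

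The heart of the argument is then to observe that $H^r(\Omega;\mathscr O)$ vanishes for $r>0$ and equals $\C$ for $r=0$: this is the trivial-weight case of Borel--Weil--Bott, and can equivalently be seen from the fact that the Bruhat decomposition of $\Omega$ consists of cells of even real dimension, forcing the Hodge decomposition of $H^*(\Omega;\C)$ to be concentrated on the diagonal $(p,p)$. Hence only the term $(r,s)=(0,q)$ survives in the sum and one gets $H^{0,q}(G;\frk h)\cong H^q(\frk k)=H^q(\frk t)$.

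Finally, the Chevalley--Eilenberg theorem applied to the compact (abelian) group $T$ identifies $H^q(\frk t)$ with $H^q(T;\C)$, completing the proof. The step that requires the most care is the appeal to the vanishing $H^r(\Omega;\mathscr O)=0$ for $r>0$; should one wish to avoid invoking Borel--Weil--Bott, this vanishing can be established directly inside the algebraic framework by computing $H^r(\frk u_*,\frk k,\C)$ from the explicit root-space description of $\frk u_*$ and the trivial $\frk u_*$-action on $\C$.
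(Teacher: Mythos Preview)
Your proof is correct and follows essentially the same route as the paper: verify the hypotheses of Theorem \ref{int:thm:homogeneous:homo_on_simply3} (with $K=T$ and $\frk z=\bigoplus_{\alpha\in\Delta_+}\frk g_\alpha$), reduce to the tensor decomposition, and kill all terms with $r>0$ using the vanishing of $H^{0,r}_{\delbar}(\Omega)$ for the full flag variety. The paper is terser---it simply asserts $H^{p,q}_{\delbar}(\Omega)=0$ for $p\neq q$ and $H^{0,0}_{\delbar}(\Omega)=\C$---whereas you supply an explicit justification via Borel--Weil--Bott or the Bruhat cell structure; this extra care is appropriate, since that vanishing is indeed the crux of the argument.
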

    
    \begin{proof}
        % Consider the following elliptic Lie algebra $$ \frk h = \frk t \oplus \bigoplus_{\alpha \in \Delta_+} \frk g_\alpha$$ with $\frk t$ being the complexification of the Lie algebra of the maximal torus $T$.
    
        We define the homogeneous manifold $\Omega \doteq G / T$ and we denote the quotient map by $\pi : G \to \Omega$. Notice that we have precisely the hypothesis of Theorem \ref{int:thm:homogeneous:homo_on_simply3}. Therefore, we have
        $$ H^{0,q}(G) = \sum_{ r + s = q} H^{0,r}(T; \frk t) \otimes H^{0,s}_{\delbar}(\Omega) = H^{0,q}(T)$$ and the last equality follows from the fact that  $H^{p,q}_{\delbar}(\Omega) = 0$ if $p \neq q$ and $H^{0,0}_{\delbar}(\Omega) = \C$. % In this case we can compute the $\dext'$-cohomology of $G$ using only algebraic methods.
    \end{proof}

%%%%%%%%%%%%%%%%%%%%%%%
%%% THE END IS NEAR %%%
%%%%%%%%%%%%%%%%%%%%%%%

%%%%%%%%%%%%%%%%%%%%
%%% Bibliography %%%
%%%%%%%%%%%%%%%%%%%%

\bibliographystyle{plain}
\bibliography{bibliography}

\printindex

\end{document}